\newtheorem{proposition}{Proposition}
\newtheorem{theorem}{Theorem}
\newtheorem{lemma}{Lemma}
\newtheorem{corollary}{Corollary}
\newtheorem{definition}{Definition}
\def\N{{\mathbb N}}
\def\EE{{\mathbb E}}
\def\PP{{\mathbb P}}
\def\RR{{\mathbb R}}
\def\ind{{\mathbf{1}}}
\def\ore{\overrightarrow{e}}
\def\orE{\overrightarrow{E}}
\def\d{\partial}
\def\bx{\bold{x}}
\def\by{\bold{y}}
\def\bh{\bold{h}}
\def\bB{\bold{B}}
\def\Rcal{{\mathcal R}}
\def\Dcal{{\mathcal D}}
\def\Scal{{\mathcal S}}
\def\cG{{\mathcal G}}
\def\cGs{{\mathcal G}_{\star}}
\def\bac{\backslash}
\def\lwc{\leadsto}
\def\EE{{\mathbb E}}
\def\PP{{\mathbb P}}
\def\per{{\mathrm{per}}}
\def\ind{{\mathbf{1}}}
\def\R{{\mathbb R}}
\def\N{{\mathbb N}}
\newcommand*\lbf{\ensuremath{\boldsymbol\ell}}
\def\rbf{{\mathbf r}}
\def\cG{{\mathcal G}}
\def\cS{\Sigma}
\def\cU{{\mathcal U}}
\def\cP{{\mathcal P}}
\def\cL{{\mathcal L}}
\newcommand{\BEAS}{\begin{eqnarray*}}
\newcommand{\EEAS}{\end{eqnarray*}}
\newcommand{\BEA}{\begin{eqnarray}}
\newcommand{\EEA}{\end{eqnarray}}
\newcommand{\BEQ}{\begin{equation}}
\newcommand{\EEQ}{\end{equatioyn}}
\newcommand{\BIT}{\begin{itemize}}
\newcommand{\EIT}{\end{itemize}}
\newcommand{\BNUM}{\begin{enumerate}}
\newcommand{\ENUM}{\end{enumerate}}
\begin{document}
%
\title{Counting matchings in irregular bipartite graphs and random lifts}

\author{M. Lelarge\footnote{INRIA-ENS, Paris, France,
email: marc.lelarge@ens.fr}}
\date{}

\maketitle

\begin{abstract}
We give a sharp lower bound on the number of matchings of a given size
in a bipartite graph. When specialized to regular bipartite graphs,
our results imply Friedland's Lower Matching Conjecture and
Schrijver's theorem proven by Gurvits and Csikv\'ari.
Indeed, our work extends the recent work of Csikv\'ari done for
regular and bi-regular bipartite graphs. Moreover, our lower bounds
are order optimal as they are attained for a sequence of
$2$-lifts of the original graph as well as for random $n$-lifts of the
original graph when $n$ tends to infinity.

We then extend our results to permanents and subpermanents sums. For
permanents, we are able to recover the
lower bound of Schrijver recently proved by Gurvits using stable
polynomials. 
Our proof is algorithmic and borrows ideas from the theory of local
weak convergence of graphs, statistical physics and covers of
graphs. We provide new lower bounds for subpermanents sums and obtain
new results on the number of matching in random $n$-lifts with some
implications for the matching measure and the spectral measure of
random $n$-lifts as well as for the spectral measure of infinite trees.
\end{abstract}

\section{Introduction}
Recall that a $n\times n$ matrix $A$ is called doubly stochastic if it
is nonnegative entrywise and each of its columns and rows sums to
one. Also the permanent of a $n\times n$ matrix $A$ is defined as
\BEAS
\per(A) =\sum_{\sigma\in \Scal_n}\prod_{i=1}^n a_{i, \sigma(i)},
\EEAS
where the summation extends over all permutation $\sigma$ of
$\{1,\dots, n\}$.
The main result proved in \cite{sch98} is the following theorem:
\begin{theorem}\label{th:sch}(Schrijver \cite{sch98})
For any doubly stochastic $n\times n$ matrix $A=(a_{i,j})$, we define
$\tilde{A}=(\tilde{a}_{i,j}=a_{i,j}(1-a_{i,j}))$ and we have
\BEA
\label{eq:sch1}\per(\tilde{A})\geq \prod_{i,j}(1-a_{i,j}).
\EEA
\end{theorem}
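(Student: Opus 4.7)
The plan is to derive Theorem~\ref{th:sch} from the paper's main lower bound on weighted matchings in irregular bipartite graphs. Encode $A$ as the weighted complete bipartite graph $G$ on parts $U,V$ of size $n$, with edge weight $a_e:=a_{ij}$ on $e=(u_i,v_j)$. Then $\per(\tilde A)=\sum_{M}\prod_{e\in M}a_e(1-a_e)$, where the sum runs over perfect matchings $M$ of $G$. Using the factorisation $\prod_{e\in M}(1-a_e)=\prod_{e\in E(G)}(1-a_e)\big/\prod_{e\notin M}(1-a_e)$, inequality~\eqref{eq:sch1} is equivalent to
\begin{equation*}
\sum_{M\text{ perfect matching of }G}\frac{\prod_{e\in M}a_e}{\prod_{e\notin M}(1-a_e)}\;\ge\;1,
\end{equation*}
which is a weighted perfect-matching generating function of the kind handled by the machinery of the paper.

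The paper's main theorem provides a sharp Bethe-type lower bound on such weighted matching sums for any (irregular) bipartite graph. The bound is obtained by monotonicity under $2$-lifts (extending Csikv\'ari's work) and is attained asymptotically along random $n$-lifts of $G$, whose local weak limit is the universal cover tree. Applied to our $G$ with weights $a_{ij}$, this yields a lower bound expressible as a tree functional evaluated at the fixed point of the associated belief-propagation recursion. The remaining task is to evaluate this tree functional explicitly for the doubly stochastic choice of weights. By continuity of both sides of \eqref{eq:sch1} in $A$, one may additionally assume the $a_{ij}$ are rational, in which case the cover interpretation of $A$ is literal: writing $a_{ij}=k_{ij}/N$ with $\sum_j k_{ij}=\sum_i k_{ij}=N$, the matrix $NA$ is the biadjacency matrix of an $N$-regular bipartite multigraph, so the whole problem lives in the regular-lift framework of the paper.

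For doubly stochastic $A$, the constraints $\sum_j a_{ij}=\sum_i a_{ij}=1$ single out the canonical fixed point of belief-propagation on the universal cover tree, namely the one in which the outgoing message from $u_i$ toward $v_j$ is proportional to $1-a_{ij}$. Substituting this fixed point into the Bethe free-energy functional telescopes, after cancellation of the row and column contributions, to the product $\prod_{ij}(1-a_{ij})$. This final telescoping is the main obstacle of the argument: it is where the doubly stochastic hypothesis is consumed, and it relies on the self-dual symmetry $a\leftrightarrow 1-a$ hard-coded in the definition of $\tilde A$, a symmetry that has no analogue for general nonnegative $A$ and is precisely what makes Schrijver's inequality tighter than Gurvits' capacity bound in this regime.
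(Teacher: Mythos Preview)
Your reformulation of \eqref{eq:sch1} as $\sum_{M}\prod_{e\in M}a_e\big/\prod_{e\notin M}(1-a_e)\ge 1$ is algebraically correct but leads you astray: this sum carries weights on the edges \emph{outside} the matching and is therefore not of the form $\per_k(\cdot)$ or $P_G(z)$ handled by the paper. Indeed, pulling out the denominator shows the sum equals $\per(\tilde A)\big/\prod_{e}(1-a_e)$, so you are back to bounding $\per(\tilde A)$ from below. You then write that you apply the main theorem ``to our $G$ with weights $a_{ij}$'', but that produces a lower bound on $\per(A)$, not on $\per(\tilde A)$; this is the wrong matrix, and the subsequent talk of a BP fixed point with messages ``proportional to $1-a_{ij}$'' and of a telescoping cancellation is neither justified nor connected to the quantity you need to bound. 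The rational approximation and the $N$-regular multigraph interpretation are likewise detours that do not close the gap.

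The paper's derivation of Theorem~\ref{th:sch} from Theorem~\ref{th:maingen} is a two-line computation and involves no reformulation, no identification of a BP fixed point, and no telescoping. One applies Theorem~\ref{th:maingen} with $k=n$ (equivalently Theorem~\ref{th:gurv}) to the matrix $\tilde A$, and simply uses $x_{ij}=a_{ij}$ as a \emph{feasible test point} in the maximum over $M_{n,n}$; there is no need for it to be the maximizer. Because $A$ is doubly stochastic, the two vertex sums in $S^B(\tilde A,A)$ vanish, and because $\tilde a_{ij}/a_{ij}=1-a_{ij}$, the edge sum becomes
\[
\sum_{i,j}\Bigl(a_{ij}\ln(1-a_{ij})+(1-a_{ij})\ln(1-a_{ij})\Bigr)=\sum_{i,j}\ln(1-a_{ij}),
\]
which is exactly $\ln\prod_{i,j}(1-a_{ij})$. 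That is the whole argument.

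Two side remarks. Your final sentence asserts that Schrijver's inequality is ``tighter than Gurvits' capacity bound in this regime'', but the paper explicitly notes (immediately after Theorem~\ref{th:gurv}) that Theorems~\ref{th:sch} and~\ref{th:gurv} are equivalent, each implying the other by a one-line substitution. And the doubly stochastic hypothesis is consumed not through any ``self-dual symmetry $a\leftrightarrow 1-a$'' but simply through the vanishing of the row and column defect terms when $x=A$ is plugged into $S^B(\tilde A,\cdot)$.
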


It is proved in \cite{gur11,6978993} that this theorem implies:
\begin{theorem}\label{th:gurv}
Let $A$ be a non-negative $n\times n$ matrix. Then, we
have
\BEA
\label{eq:lnper}\ln \per(A) \geq \max_{x\in
  M_{n,n}}\sum_{i,j}(1-x_{i,j})\ln(1-x_{i,j})+x_{i,j}\ln \left( \frac{a_{i,j}}{x_{i,j}}\right),
\EEA
with the convention $\ln \frac{0}{0}=1$ and where $M_{n,n}$ is the set
of $n\times n$ doubly stochastic matrices.
\end{theorem}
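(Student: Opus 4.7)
The plan is to derive Theorem \ref{th:gurv} from Theorem \ref{th:sch} via a maximum-entropy argument on the symmetric group $\Scal_n$, complemented by a matrix-scaling construction. Observe first that the right-hand side of (\ref{eq:lnper}) is log-linear in the entries $a_{i,j}$: up to an additive entropy term depending only on $x$, it equals $\sum_{i,j} x_{i,j}\ln a_{i,j}$. This strongly suggests that Theorem \ref{th:gurv} is the Fenchel dual of an entropy bound on distributions over $\Scal_n$, with Theorem \ref{th:sch} providing the required entropy estimate.

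\emph{Step 1: A Jensen lower bound.} For any probability distribution $p$ on $\Scal_n$, Jensen's inequality applied to the concave logarithm yields
\begin{eqnarray*}
\ln \per(A) \;=\; \ln \sum_{\sigma\in \Scal_n} p(\sigma)\,\frac{\prod_i a_{i,\sigma(i)}}{p(\sigma)} \;\geq\; \sum_{i,j} q_p(i,j)\,\ln a_{i,j} \;+\; H(p),
\end{eqnarray*}
where $q_p(i,j) = \sum_{\sigma:\sigma(i)=j} p(\sigma)$ is the (automatically doubly stochastic) marginal of $p$ and $H(p) = -\sum_\sigma p(\sigma)\ln p(\sigma)$ is the Shannon entropy. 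It therefore suffices to exhibit, for every doubly stochastic matrix $x$, a distribution $p^\star$ on $\Scal_n$ with marginals equal to $x$ and with
\begin{eqnarray*}
H(p^\star) \;\geq\; \sum_{i,j}\bigl[(1-x_{i,j})\ln(1-x_{i,j}) \;-\; x_{i,j}\ln x_{i,j}\bigr].
\end{eqnarray*}
Substituting this into the Jensen bound and rearranging the entropy terms produces exactly (\ref{eq:lnper}).

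\emph{Step 2: Construction of $p^\star$ from Schrijver.} Set $\tilde x_{i,j}:=x_{i,j}(1-x_{i,j})$. Theorem \ref{th:sch} applied to $x$ gives
\begin{eqnarray*}
\ln \per(\tilde x) \;\geq\; \sum_{i,j}\ln(1-x_{i,j}).
\end{eqnarray*}
The plan is to build $p^\star$ as a permanental-type measure $p^\star(\sigma) \propto \prod_i \alpha_{i,\sigma(i)}$, where $\alpha$ is obtained from $\tilde x$ through a matrix-scaling procedure tailored so that the marginals of $p^\star$ coincide with the prescribed $x$. Using the identity $H(p^\star) = \ln\per(\alpha) - \sum_{i,j} x_{i,j}\ln\alpha_{i,j}$, together with the Schrijver lower bound on $\per(\tilde x)$ and the double stochasticity of $x$ (which will cause the extra scaling parameters to cancel in pairs), a direct computation should reduce the entropy bound to
\begin{eqnarray*}
\sum_{i,j}\ln(1-x_{i,j}) \;-\; \sum_{i,j} x_{i,j}\ln[x_{i,j}(1-x_{i,j})] \;=\; \sum_{i,j}\bigl[(1-x_{i,j})\ln(1-x_{i,j}) - x_{i,j}\ln x_{i,j}\bigr],
\end{eqnarray*}
which is exactly the required estimate.

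\emph{Main obstacle.} The technical heart of the argument is the construction of $\alpha$ in Step 2: since diagonal rescalings of $\tilde x$ leave the marginals of the induced permanental measure unchanged, matching the prescribed marginals $x$ necessitates a genuinely non-diagonal transformation. This is typically implemented by a non-linear fixed-point/iterative scaling argument (in the spirit of Sinkhorn--Knopp, but adapted to the marginals of the permanental measure rather than of the matrix itself), together with a compactness argument to guarantee convergence on the interior $0<x_{i,j}<1$. Degenerate cases where some entries of $x$ equal $0$ or $1$ are then handled by continuity, which also motivates the convention $0\cdot \ln 0 = 0$ underlying $\ln(0/0)=1$ in the statement.
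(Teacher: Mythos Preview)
Your Step~1 is fine, but Step~2 contains a genuine gap that you yourself flag as the ``main obstacle'' and do not resolve. The entropy inequality you need,
\[
\max\{H(p):\ \text{marginals}(p)=x\}\ \geq\ \sum_{i,j}\bigl[(1-x_{i,j})\ln(1-x_{i,j})-x_{i,j}\ln x_{i,j}\bigr],
\]
is, via the Legendre duality implicit in your Jensen bound, \emph{equivalent} to Theorem~\ref{th:gurv} itself: taking the infimum over $A$ in $\ln\per(A)-\sum_{i,j}x_{i,j}\ln a_{i,j}$ yields exactly the left-hand side. So unless you actually build the distribution $p^\star$, the argument is circular. Your proposed construction does not do this: you observe correctly that diagonal rescalings of $\tilde x$ leave the permanental-measure marginals unchanged, but then invoke an unspecified ``non-diagonal'' Sinkhorn-type iteration. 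No such iteration is known to produce a matrix $\alpha$ whose permanental measure has marginals $x$ \emph{and} whose permanent is tied to $\per(\tilde x)$ in a way that lets Schrijver's bound transfer; once $\alpha$ is not a diagonal rescaling of $\tilde x$, the link to $\per(\tilde x)$ is lost.

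The standard route from Theorem~\ref{th:sch} (this is the argument of \cite{gur11,6978993} that the paper cites) avoids this entirely by working only at the \emph{maximizer} $x^\star$ of the right-hand side of (\ref{eq:lnper}). The first-order (KKT) conditions there read $a_{i,j}=c_i d_j\, x^\star_{i,j}(1-x^\star_{i,j})$ for some positive $c_i,d_j$; in other words $A$ is a diagonal rescaling of $\tilde{x^\star}$. Hence $\ln\per(A)=\sum_i\ln c_i+\sum_j\ln d_j+\ln\per(\tilde{x^\star})$, and Schrijver applied to $x^\star$ plus the double stochasticity of $x^\star$ (to rewrite $\sum_i\ln c_i+\sum_j\ln d_j=\sum_{i,j}x^\star_{i,j}\ln(c_id_j)$) gives exactly the desired lower bound --- with no Jensen step and no construction of $p^\star$ needed. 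Boundary cases are handled by continuity.

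For comparison, the paper does not derive Theorem~\ref{th:gurv} from Theorem~\ref{th:sch} at all; it obtains Theorem~\ref{th:gurv} as the $k=n$ case of the more general Theorem~\ref{th:maingen}, whose proof goes through $2$-lifts, local weak convergence to the universal cover, and the Bethe-entropy machinery of Sections~\ref{sec:statph}--\ref{sec:maingen}.
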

Clearly applying Theorem \ref{th:gurv} to $\tilde{A}$ with
$x_{i,j}=a_{i,j}$, we get Theorem \ref{th:sch} back, so that both
theorems are equivalent.
In \cite{gur06, gur08}, L. Gurvits provided a new proof of these
theorems using stable polynomials, see also \cite{MR2759363}.
Our main new result is a generalization of Theorem
\ref{th:gurv} to subpermanent sums, see Theorem \ref{th:maingen} below.
Our proof is very different from those derived in \cite{sch98, gur06,
  gur08} and borrows ideas from the recent work of Csikv\'ari
\cite{csikvari2014lower} for matchings in regular bipartite graphs.
Permanents and subpermanent sums can be interpreted
as weighted sums of matchings in complete bipartite graphs and the
extension of the approach in \cite{csikvari2014lower} to this general
framework is the main technical contribution of our work. 
Interestingly, the obstacles to overcome are computational in nature
and we present a very algorithmic solution.
As a byproduct, we also obtain new results on the number of matchings in
random lifts, see Theorem \ref{th:rl} below.
We compute the limit of the matching generating function (i.e. the
partition function of the monomer-dimer model) for a sequence of
random $n$-lifts of a graph $G$ as an explicit function of the
original graph $G$. This result has also some implications for the
spectral measure and the matching measure of random lifts, see Theorems
\ref{th:match} and \ref{th:spec}.

As a consequence of Theorem \ref{th:sch}, Schrijver shows in \cite{sch98} that any $d$-regular
bipartite graph with $2n$ vertices has at least
\BEA
\label{eq:matchk}\left(\frac{(d-1)^{d-1}}{d^{d-2}} \right)^n
\EEA
perfect matchings (a perfect matching is a set of disjoint edges
covering all vertices).
For each $d$, the base $(d-1)^{d-1}/d^{d-2}$ in (\ref{eq:matchk}) is
best possible 
\cite{MR2288357}.
Similarly, our Theorem \ref{th:maingen} shows that the right-hand term in (\ref{eq:lnper})
is best possible in the following sense: for any $n\times n$ 
matrix $A$, we show that there exists a sequence of growing
matrices $A_{\ell}$ obtained by taking successive $2$-lifts of $A$
(see Definition \ref{def:liftm}) such that
its permanent grows exponentially with its size at a rate given by the
right-hand term in (\ref{eq:lnper}).
We refer to Theorem \ref{th:maingen} for a precise
statement and similar results for subpermanent sums. 

In \cite{MR2438582}, Friedland, Krop and Markstr\"om conjectured a
possible generalization of (\ref{eq:matchk}) which is known as
Friedland's lower matching conjecture: for $G$ a $d$-regular bipartite
graph with $2n$ vertices, let $m_k(G)$ denote the number of
matchings of size $k$ (see Section \ref{sec:LBG} for a precise
definition), then
\BEA
\label{eq:conjlm}m_{k}(G) \geq {n \choose k}^2\left( \frac{d-p}{d}\right)^{n(d-p)}(dp)^{np},
\EEA
where $p=\frac{k}{n}$.
An asymptotic version of this conjecture was proved using Theorem
\ref{th:sch} in \cite{gur11,6978993}. A slightly stronger statement of
the conjecture
was proved by Csikv\'ari in \cite{csikvari2014lower} and we extend it
to cover 
irregular bipartite graphs, see Theorem \ref{th:main}. 

We state our main results in the next section. 
In Section \ref{sec:idea}, we summarize the main
ideas of the proof and describe related works. We also give some
implications of our work for extremal graph theory and for the
spectral measure and matching measure of random lifts.
Section \ref{sec:proof}
contains the technical proof. We first summarize the statistical
physics results for the monomer dimer model in Section
\ref{sec:statph}. Then, we study local recursions associated to this
model in Section \ref{sec:locrec} The results in this section build mainly on previous work of the author
\cite{lelarge2014loopy}.
In Section \ref{sec:lift}, we show how an idea of Csikv\'ari
\cite{csikvari2014lower} using $2$-lift extends to our framework and
connect it to the framework of local weak convergence in Section
\ref{sec:lwc}. We use probabilistic bounds on the coefficients of
polynomials with only real zeros to finish the proof in Section
\ref{sec:maingen}.
In Section \ref{sec:rlp}, we provide the details needed for random lifts.

\section{Main results}

We present our main results in this section. The results concerning
lower bounds for the number of matchings given in Section
\ref{sec:LBG} are implied by those in Section \ref{sec:LBP} concerning
lower bounds for permanents. The reader only interested in the most
general result concerning lower bounds might jump directly to Section
\ref{sec:LBP} where Theorem \ref{th:maingen} is stated in a
self-contained manner. Note that the proof will use notations
introduced in Section \ref{sec:LBG}.
Section \ref{sec:rl} contains our results on random lifts. They are
independent from the other results but require some notations from Section \ref{sec:LBG}.

\subsection{Lower bounds for number of matchings of a given size}\label{sec:LBG}
We consider a connected multigraph $G=(V,E)$.
We denote by $v(G)$ the cardinality of $V$: $v(G)=|V|$.
 We denote by the same symbol $\d v$ the set of
neighbors of node $v\in V$ and the set of edges incident to $v$.
A matching is encoded by a binary vector, called its incidence vector,
$\bB=(B_e,\: e\in E)\in \{0,1\}^E$ defined by $B_e=1$ if and only if
the edge $e$ belongs to the matching. 
We have for all $v\in V$, $\sum_{e\in \d v}B_e\leq 1$.
The size of the matching is given by $\sum_e B_e$.
We will also use the following notation $e\in \bB$ to mean that $B_e=1$, i.e. that the edge $e$ is in the matching.
For a finite graph $G$, we define the matching number of $G$ as
$\nu(G)=\max\{\sum_e B_e\}$ where the maximum is taken over matchings of
$G$.

The matching polytope $M(G)$ of a graph $G$ is defined as the convex hull of
incidence vectors of matchings in $G$. We define the fractional
matching polytope as
\begin{eqnarray}
\label{def:LG} FM(G) = \left\{ \bx \in \R^E,\: x_e\geq 0,\:\sum_{e\in
    \d v} x_e\leq 1\right\}.
\end{eqnarray}
We also define the fractional matching number $\nu^*(G) = \max_{\bx \in FM(G)}
\sum_e x_e\geq \nu(G)$.
It is well-known that: $M(G)=FM(G)$ if and only if $G$ is bipartite
and in this case, we have $\nu(G)=\nu^*(G)$.

For a given graph $G$, we denote by $m_k(G)$ the number of matchings
of size $k$ in $G$ ($m_0(G)=1$). 
For a parameter $z>0$, we define the matching generating function:
\BEAS
P_G(z) = \sum_{k=0}^{\nu(G)}m_k(G)z^k.
\EEAS
In statistical physics, the function $\ln P_G(z)$ is called the partition function.
We define by $M_k(G)$ the convex hull
of incidence vectors of matchings in $G$ of size $k$ and similarly for $0\leq t\leq \nu^*(G)$:
\BEA
\label{def:fmt}FM_t(G) = \left\{ \bx \in \R^E,\: x_e\geq 0,\:\sum_{e\in
    \d v} x_e\leq 1,\:\sum_{e\in E} x_e=t\right\}.
\EEA
If $G$ is bipartite, we have $M_k(G)=FM_k(G)$.
Theorem \ref{th:main} below deals with bipartite graphs but Theorem \ref{th:rl} deals with general graphs and fractional matching polytopes will be needed there.

For any finite graph $G$, we define the function $S^B_G: FM(G) \to \RR$ by:
 \begin{eqnarray}
\label{def:SB}S^B_G(\bx)&=&\sum_{e\in E} -x_e\ln
x_e +(1-x_e)\ln (1-x_e) - \sum_{v\in V}\left( 1-\sum_{e\in \d v} x_e\right)\ln
\left( 1-\sum_{e\in \d v}x_e\right).
\end{eqnarray}
The function $S^B_G$ is concave on $FM(G)$ by Proposition \ref{prop:conc}.

\begin{definition}
Let $G$ be a graph with no loops. Then $H$ is a $2$-lift of $G$ if $V(H)=V(G) \times
\{0,1\}$ and for every $(u,v)\in E(G)$, exactly one of the following
two pairs are edges of $H$: $((u,0),(v,0))$ and $((u,1),(v,1)) \in
E(H)$ or $((u,0),(v,1))$ and $((u,1),(v,0)) \in E(H)$. If $(u,v)\notin
E(G)$, then none of $((u,0),(v,0))$,$((u,1),(v,1))$, $((u,0),(v,1))$
and $((u,1),(v,0))$ are edges in $H$.
\end{definition}

Note that a bipartite graph $G$ has no loops.
\begin{theorem}\label{th:main}
For any finite bipartite graph $G$, we have for $z>0$,
\BEA
\label{eq:lnP}\ln P_G(z) \geq \max_{\bx \in M(G)} \left\{\left( \sum_e x_e\right)\ln z+S^B_G(\bx)\right\}. 
\EEA
We have for all $k\leq \nu(G)$,
\BEAS
m_k(G) \geq b_{\nu(G),k}(k/\nu(G))\exp \left( \max_{\bx \in
    M_k(G)} S^B_G(\bx)\right),
\EEAS
where 
 $b_{n,k}(p)$ is the probability for a binomial random variable
$Bin(n,p)$ to take the value $k$, i.e. $b_{n,k}(p) = {n \choose k}p^k(1-p)^{n-k}$. 
Moreover, there exists a sequence of bipartite graphs $\{G_n = (V_n,E_n)\}_{n\in
  \N}$ such that $G_0=G$, $G_n$ is a $2$-lift of
$G_{n-1}$ for $n\geq 1$ and for all $z>0$,
\BEAS
\lim_{n\to \infty} \frac{1}{v(G_n)} \ln P_{G_n}(z) = \frac{1}{v(G)}\max_{\bx \in M(G)} \left\{\left( \sum_e x_e\right)\ln z+S^B_G(\bx)\right\}.
\EEAS
\end{theorem}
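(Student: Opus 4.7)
The plan is to prove the three parts of the theorem by combining: (a) a Bethe-type variational inequality obtained through analysis of a local recursion for the monomer--dimer model on $G$, (b) a $2$-lift construction in the spirit of Csikv\'ari that yields a monotone sequence of bipartite graphs, and (c) a real-rootedness argument to extract the binomial prefactor for $m_k(G)$.

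First, for the inequality $\ln P_G(z) \geq \max_{\bx \in M(G)}\{(\sum_e x_e)\ln z + S^B_G(\bx)\}$, I would work with the belief-propagation / cavity recursion for the monomer--dimer model on $G$. Since $S^B_G$ is concave (Proposition \ref{prop:conc}) and $M(G)=FM(G)$ for bipartite $G$, the maximum is attained at a unique point $\bx^\ast$ characterized by a BP fixed-point equation producing a consistent set of edge occupation probabilities. Plugging these marginals into a Gibbs variational identity and exploiting bipartiteness (so that the BP equations have a unique stable fixed point) yields the desired lower bound; the technical machinery is available from the author's previous work \cite{lelarge2014loopy} on loopy recursions.

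Second, for the existence of the $2$-lift sequence, I would adapt Csikv\'ari's argument to irregular bipartite graphs. The key identity is a signed factorization of the matching generating polynomial of a $2$-lift: if $H$ is the $2$-lift associated with a signing $\sigma: E(G) \to \{\pm 1\}$, there is a polynomial $P_G^\sigma(z)$ such that $P_H(z)=P_G(z)\cdot P_G^\sigma(z)$. Averaging over signings and applying the arithmetic--geometric inequality produces at least one $\sigma$ with $P_G^\sigma(z) \geq P_G(z)$, hence a $2$-lift $H$ satisfying $\ln P_H(z)\geq 2\ln P_G(z)$. Iterating, the per-vertex quantity $\frac{1}{v(G_n)}\ln P_{G_n}(z)$ is non-decreasing. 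To identify the limit with the right-hand side of \eqref{eq:lnP}, I would choose the signings so that the resulting sequence $(G_n)$ converges locally (in the Benjamini--Schramm sense) to a unimodular random tree: repeated $2$-lifts destroy short cycles, and the Bethe entropy is invariant under covers. On the resulting locally tree-like graphs, the per-vertex monomer--dimer free energy converges to the Bethe free energy computed on $G$ itself. Combined with the upper bound obtained by applying step one to each $G_n$ (together with Benjamini--Schramm convergence of the variational formula), this pins down the limit.

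Third, for the binomial bound on $m_k(G)$, I would use the Heilmann--Lieb theorem: $P_G$ has only real negative zeros, so for any $z>0$ the normalized coefficients $m_k(G)z^k/P_G(z)$ form the distribution of a sum of independent Bernoulli variables with parameters $-z\zeta_i/(1-z\zeta_i)$ where $-\zeta_i$ are the roots. A Darroch-type concentration inequality for such sums then gives a lower bound of the form $m_k(G)z^k/P_G(z) \geq b_{\nu(G),k}(k/\nu(G))$ when $z$ is chosen so that the mean of this distribution equals $k$. Substituting the lower bound on $P_G(z)$ from step one at this particular $z$ and performing the Legendre transform identifying $\max_{\bx \in M(G)}\{(\sum_e x_e)\ln z + S^B_G(\bx)\}$ at the chosen tilt with $\max_{\bx\in M_k(G)} S^B_G(\bx)$ yields the stated inequality.

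The main obstacle is step two: selecting the $2$-lifts so that the monotone improvement is preserved at every stage while also forcing the local geometry to become tree-like in the limit, and then rigorously transferring the limit of per-vertex free energies to the Bethe variational value. This combines a combinatorial signed-polynomial identity with a careful local weak convergence argument, and is precisely where the technical sections of the paper (the analysis of local recursions in Section \ref{sec:locrec}, the $2$-lift step in Section \ref{sec:lift}, and the local weak convergence framework in Section \ref{sec:lwc}) do the heavy lifting.
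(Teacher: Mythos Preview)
Your step two reverses the direction of the key monotonicity, and this error propagates through the whole plan. In the paper (Proposition~\ref{prop:lift}, following Csikv\'ari) the inequality goes the other way: for \emph{any} $2$-lift $H$ of a bipartite graph $G$ one has $P_G(z)^2\ge P_H(z)$, i.e.\ the per-vertex free energy $\frac{1}{v(\cdot)}\ln P_{(\cdot)}(z)$ is \emph{non-increasing} along $2$-lifts. The proof is combinatorial (compare any $2$-lift to the trivial lift $G\cup G$ via projections of matchings), not via a signed factorization. Your claimed identity $P_H(z)=P_G(z)P_G^\sigma(z)$ does not hold for the matching generating function at positive $z$; the Godsil--Gutman-type factorizations live at the level of the matching polynomial $Q_G$ (equivalently $P_G$ at negative argument), so the averaging/AM--GM maneuver you describe does not apply here. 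Consequently the ``non-decreasing'' conclusion is false, and your argument for pinning down the limit collapses: if step one gave $\frac{1}{v(G_n)}\ln P_{G_n}(z)\ge$ Bethe for every $n$ and step two made the sequence non-decreasing, the limit could not be Bethe unless every term already equaled it.

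Relatedly, the paper does \emph{not} prove \eqref{eq:lnP} by a direct Gibbs/BP variational argument as you propose in step one. The Gibbs variational principle maximizes over true distributions, not over edge pseudo-marginals in $FM(G)$, and it is precisely the inequality ``true free energy $\ge$ Bethe free energy'' that is at stake. The paper obtains \eqref{eq:lnP} \emph{as a consequence} of the $2$-lift monotonicity: the non-increasing sequence $\frac{1}{v(G_n)}\ln P_{G_n}(z)$ converges (by local weak convergence to the universal cover, Theorem~\ref{th:lwclim}) to the Bethe value $\frac{1}{v(G)}\max_{\bx}\Phi^B_G(\bx,z)$; since $G=G_0$ is the first term of a non-increasing sequence, \eqref{eq:lnP} follows, and the same sequence witnesses the limit statement. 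Thus steps one and two in the paper are a single argument, not two independent ones. Your step three (Heilmann--Lieb real-rootedness plus a Hoeffding/Pitman bound on the Poisson-binomial, then restricting the variational lower bound to $M_k(G)$) is essentially what the paper does in Section~\ref{sec:maingen}.
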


Consider the particular case where $G$ is a $d$-regular bipartite
graph on $2n$ vertices. In this case, we have $\nu(G)=n$ and we can take $x^*_e=\frac{k}{nd}$
for all $e\in E$ so that $\bx^*\in M_k(G)$ and we have
\BEAS
S^B_G(\bx^*) = n\left(p\ln\left( \frac{d}{p}\right)+(d-p)\ln\left( 1-\frac{p}{d}\right)-2(1-p)\ln(1-p)\right),
\EEAS
with $p=\frac{k}{n}$. We see that we recover the first statement in
Theorem 1.5 of \cite{csikvari2014lower}. In particular, for $k=n$,
i.e. $p=1$, we recover (\ref{eq:matchk}) and for $k<n$, as explained
in \cite{csikvari2014lower}, we slightly
improve upon (\ref{eq:conjlm}). Note that in this particular case, we
have $m_n(G)\leq (d!)^{n/d}$ by a result of Bregman \cite{MR0327788} (see also
\cite{davies2015independent} for upper bounds for $m_k(G)$ with $k\leq
n$).

Taking $z=1$ in (\ref{eq:lnP}), we obtain the following bound on the total number of matchings:
\begin{proposition}\label{prop:totalmatch}
For any bipartite graph $G$, we have:
\BEAS
\sum_{k=0}^{\nu(G)} m_k(G) \geq \exp\left( \max_{\bx \in M(G)}S^B_G(\bx)\right).
\EEAS
\end{proposition}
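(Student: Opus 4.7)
The plan is to derive Proposition \ref{prop:totalmatch} as an immediate specialization of the first inequality (\ref{eq:lnP}) in Theorem \ref{th:main}, evaluated at $z=1$. Both sides of (\ref{eq:lnP}) simplify dramatically at this value of the parameter, and the proposition drops out with essentially no additional work.

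Concretely, I would first note that the matching generating function satisfies
$$P_G(1) = \sum_{k=0}^{\nu(G)} m_k(G)$$
directly from its definition, so $\ln P_G(1)$ is exactly the logarithm of the quantity we wish to lower-bound. On the right-hand side of (\ref{eq:lnP}), the factor $\ln z$ multiplying $\sum_e x_e$ vanishes at $z=1$, so the maximization over $\bx \in M(G)$ of the bracketed expression collapses to $\max_{\bx \in M(G)} S^B_G(\bx)$. Exponentiating both sides of the resulting inequality then yields the claimed bound verbatim.

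There is essentially no obstacle internal to this argument: the content has already been packaged into Theorem \ref{th:main}, and Proposition \ref{prop:totalmatch} merely picks out the specialization controlling the total number of matchings rather than the full generating function. One point worth verifying in passing is that the maximum on the right is actually attained, but this is routine since $M(G)$ is a compact polytope and $S^B_G$ is continuous on it (in fact concave, by the cited Proposition \ref{prop:conc}), so the supremum is a genuine maximum and the statement is well-posed. The substantive work lies upstream, in establishing (\ref{eq:lnP}) itself via the $2$-lift construction, local weak convergence, and the monomer-dimer analysis outlined in the paper; none of that is needed here beyond invoking Theorem \ref{th:main} as a black box.
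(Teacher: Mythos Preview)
Your proposal is correct and matches the paper's approach exactly: the paper states immediately before Proposition~\ref{prop:totalmatch} that it is obtained by taking $z=1$ in (\ref{eq:lnP}), which is precisely what you do.
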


\subsection{Lower bounds for permanents}\label{sec:LBP}

In this section, we extend previous results to weighted graphs. We
state our results in term of permanents.
Let $A$ be a non-negative $n\times n$ matrix. We denote by $M_n$ the
set of such matrices.
Recall that the permanent of $A\in M_n$ is defined by
\BEAS
\per(A) =\sum_{\sigma\in \Scal_n}\prod_{i=1}^n a_{i, \sigma(i)}.
\EEAS
We define by $M_{n,n}$ the set of $n\times n$ doubly stochastic
matrices:
\BEAS
M_{n,n}=\left\{A,\: 0\leq a_{i,j},\:\sum_i a_{i,j}=\sum_j a_{i,j} =1\right\}\subset M_n.
\EEAS

For $1\leq k\leq n$, let $\per_k(A)$ be the sum of permanents of all
$k\times k$ minors in $A$ and $\per_0(A)=1$. $\per_k(A)$ is called the $k$-th
subpermanent sum of $A$.
We define $M_{n,k}$ the set of $n\times n$ non-negative sub-stochastic
matrices with entrywise $L_1$-norm $k$:
\BEAS
M_{n,k}=\left\{A,\: 0\leq a_{i,j},\:\sum_i a_{i,j}\leq 1,\:\sum_j a_{i,j}
\leq 1,\: \sum_{i,j}a_{i,j}=k\right\}\subset M_n.
\EEAS
We also define the set of substochastic matrices:
\BEAS
M_{n,\leq}=\left\{A,\: 0\leq a_{i,j},\:\sum_i a_{i,j}\leq 1,\:\sum_j a_{i,j}
\leq 1\right\}\subset M_n.
\EEAS
We define the function $S^B:M_{n}\times M_{n,\leq}\to \RR\cup\{-\infty\}$ by
\BEA
\label{def:SBA}S^B(A,\bx) &=& \sum_{i,j}
x_{i,j}\ln\frac{a_{i,j}}{x_{i,j}}+(1-x_{i,j})\ln(1-x_{i,j})\\
&& -\sum_i\left( 1-\sum_jx_{i,j}\right)\ln\left(
  1-\sum_jx_{i,j}\right)-\sum_j\left( 1-\sum_ix_{i,j}\right)\ln\left(
  1-\sum_ix_{i,j}\right),
\EEA
with the convention $\ln \frac{0}{0}=1$. First note that if $A$ is the incidence matrix of a bipartite graph $G$, and $\bx$ is such that there exists $a_{i,j}=0$ and $x_{i,j}>0$, then $S^B(A,\bx)=-\infty$. Moreover if $\bx$ has only non-negative components corresponding to edges of the graph $G$, then we have $S^B(A,\bx)=S^B_G(\bx)$ as defined in (\ref{def:SB}) with a slight abuse of notation: the zero components (on no-edges of $G$) of $\bx$ as argument of $S^B(A,\bx)$ are removed in the argument of $S^B_G(\bx)$.
Note that $\bx\mapsto S^B(A,\bx)$ is concave on $M_{n,\leq}$ (since
$S^B_G$ is concave on $FM(G)$ by Proposition \ref{prop:conc}).

\begin{definition}\label{def:liftm}
Let $A$ be a non-negative $n\times n$ matrix. Then $B$ is a $2$-lift
of $A$ if $B$ is a $2n\times 2n$ non-negative matrix such that for all
$i,j\in \{1,\dots n\}$, either $b_{i,j}=b_{i+n,j+n}=a_{i,j}$ and
$b_{i,j+n}=b_{i+n,j}=0$ or $b_{i,j+n}=b_{i+n,j}=a_{i,j}$ and
$b_{i,j}=b_{i+n,j+n}=0$.
\end{definition}

\begin{theorem}\label{th:maingen}
Let $A$ be a non-negative $n\times n$ matrix. Let $\nu(A)= \max
\{k,\:\per_k(A)>0\}$. For all $k\leq \nu(A)$, we have
\BEA
\label{eq:lowper}\per_k(A) \geq b_{\nu(A),k}(k/\nu(A))\exp\left( \max_{\bx\in M_{n,k}} S^B(A,\bx)\right),
\EEA
where $b_{n,k}(p) = {n \choose k}p^k(1-p)^{n-k}$.
Moreover, there exists a sequence of matrices $\{A_\ell\in M_{2^\ell n}\}_{\ell\in \N}$ such
that $A_0=A$, $A_\ell$ is a $2$-lift of $A_{\ell-1}$ for $\ell\geq 1$
and for all $z>0$,
\BEAS
\lim_{\ell\to \infty}\frac{1}{2^\ell }\ln \left(\sum_{k=0}^{\nu(A_\ell)}
\per_k(A_{\ell})z^k\right) = \max_{\bx\in M_{n,\leq}}\left\{
  \left(\sum_{i,j} x_{i,j}\right)\ln z+S^B(A,\bx)\right\}.
\EEAS
\end{theorem}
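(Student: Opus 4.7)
The plan is to view $A$ as a weighted bipartite graph $G_A = K_{n,n}$ with edge weight $a_{i,j}$ on the edge $(i,j')$, so that $\per_k(A)$ is the weighted $k$-matching count, $P_A(z) := \sum_k \per_k(A)\, z^k$ is the associated monomer-dimer partition function, $S^B(A,\cdot)$ agrees with $S^B_{G_A}(\cdot)$ on edges of $G_A$, and Definition \ref{def:liftm} matches the notion of a weighted $2$-lift of $G_A$. Under this dictionary, Theorem \ref{th:maingen} becomes the weighted analogue of Theorem \ref{th:main}, and I would follow the same blueprint while carefully propagating the weights $a_{i,j}$ through each step.

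The first step is to prove the generating-function bound $\ln P_A(z) \geq \max_{\bx \in M_{n,\leq}}\{(\sum_{i,j} x_{i,j})\ln z + S^B(A,\bx)\}$. On a tree, the Bethe approximation is exact and the inequality is in fact an equality, as seen from the local recursions of Section \ref{sec:locrec}. For a general $A$, following the idea of \cite{csikvari2014lower}, I would prove a monotone $2$-lift lemma: among all $2$-lifts $A'$ of $A$ there exists one with $\tfrac{1}{2}\ln P_{A'}(z) \geq \ln P_A(z)$, while the variational value on the right is unchanged. Iterating this and using local weak convergence (Section \ref{sec:lwc}) of the resulting tower $\{A_\ell\}$ to the universal cover tree of $G_A$ yields the desired bound, since on the tree limit the Bethe formula gives equality. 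The same tower simultaneously realizes the matching upper bound $\tfrac{1}{2^\ell}\ln P_{A_\ell}(z)\leq \max_\bx\{\ldots\}$ (from the standard Bethe variational principle summarized in Section \ref{sec:statph}) with equality in the limit, which proves the asymptotic claim of the theorem.

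The second step converts the generating-function bound into the coefficient-wise bound \eqref{eq:lowper}. The weighted Heilmann--Lieb theorem ensures that $P_A(z)$ has only real non-positive zeros. A probabilistic lemma on such polynomials (to be developed in Section \ref{sec:maingen}) then gives, for any real-rooted $P(z) = \sum_{k=0}^\nu c_k z^k$ with positive coefficients, $c_k \geq b_{\nu,k}(k/\nu)\, z^{-k} P(z)$ at the value $z = z^\ast$ making the induced sum-of-Bernoullis distribution have mean exactly $k$. Applying this to $P_A$ with $\nu = \nu(A)$ and combining with the generating-function bound via Legendre duality in $z$ (which converts $\max_{\bx \in M_{n,\leq}}\{(\sum_{i,j} x_{i,j})\ln z^\ast + S^B(A,\bx)\} - k\ln z^\ast$ into $\max_{\bx \in M_{n,k}} S^B(A,\bx)$) produces exactly \eqref{eq:lowper}.

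The hard part will be the monotone $2$-lift lemma in the weighted setting. In the unweighted case of \cite{csikvari2014lower}, it follows from a combinatorial identity expressing the matching polynomial of a $2$-lift as a product of the matching polynomial of the base with a signed variant, together with a convexity argument showing that some sign choice beats the trivial double cover. In the weighted case this identity persists but must be carried together with the edge weights $a_{i,j}$, and the convexity step must be reformulated in terms of the concave function $S^B(A,\cdot)$ from Proposition \ref{prop:conc}. Additionally one must verify that the local weak limit of the resulting tower is the full weighted universal cover of $G_A$, so that the tree-limit variational problem recovers the entire domain $M_{n,\leq}$ rather than some strict subset. These are the principal technical burdens of extending \cite{csikvari2014lower} to the weighted irregular setting.
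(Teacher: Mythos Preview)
Your plan has the direction of the $2$-lift inequality reversed, and this is fatal. You propose to find \emph{some} $2$-lift $A'$ with $\tfrac{1}{2}\ln P_{A'}(z)\geq \ln P_A(z)$, iterate, and pass to the tree limit. But if the normalized partition function \emph{increases} along the tower, the limit is an \emph{upper} bound for $\ln P_A(z)$, not the lower bound you need. The correct statement, which is Proposition~\ref{prop:lift} in the paper and in Csikv\'ari's work, is that for \emph{every} $2$-lift $H$ of a bipartite $G$ one has $P_G(z)^2\geq P_H(z)$; the normalized sequence is \emph{non-increasing}, and its limit $\Phi^B_G(\bx(z),z)/v(G)$ is therefore a lower bound for $\ln P_G(z)/v(G)$. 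The proof of this inequality is not the signed-polynomial/convexity argument you describe (that is closer to the Marcus--Spielman--Srivastava interlacing picture); it is a direct projection count: every matching of $H$ projects to a union of paths, even cycles, and double edges in $G$, and for each such pattern the trivial lift $G\cup G$ has at least as many preimages as $H$ does, coefficient by coefficient. This argument carries weights $\theta_e=a_{i,j}$ without change.

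A second issue is your appeal to a ``standard Bethe variational principle'' giving $\tfrac{1}{2^\ell}\ln P_{A_\ell}(z)\leq \max_{\bx}\{\cdots\}$ for finite graphs. No such general upper bound holds; the Bethe expression is not an upper bound on $\ln P_G$ for graphs with cycles. In the paper the asymptotic equality for the tower is obtained not from a variational upper bound but from local weak convergence (Theorem~\ref{th:lwclim}): once $G_\ell\lwc T(G)$, the normalized partition function \emph{converges} to $\Phi^B_G(\bx(z),z)/v(G)$, and the monotonicity from Proposition~\ref{prop:lift} then pins this limit below the starting value. Your second step (Heilmann--Lieb real-rootedness plus the Hoeffding/Pitman bound to extract coefficient-wise inequalities) is essentially what the paper does in Section~\ref{sec:maingen}, but note that the passage from the generating-function bound to $\max_{\bx\in M_{n,k}} S^B(A,\bx)$ is routed through the entropy $\Sigma_G(t)$ and Proposition~\ref{prop:ord} rather than a bare Legendre duality; the point is that $\Sigma_G(t)\geq \Sigma^B_G(t)$ is inherited directly from $P_G(z)^2\geq P_H(z)$.
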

If $k=n$ in (\ref{eq:lowper}), we recover Theorem \ref{th:gurv} which is equivalent to
Theorem \ref{th:sch}. Note that if $\nu(A)<n$, then $\per (A)=0$ and
the lower bound in Theorem \ref{th:gurv} is equal to $-\infty$. Indeed if $\per (A)=0$, then if $x\in M_{n,n}$ is a permutation matrix then there exists $i,j$ such that $a_{i,j}=0$ and $x_{i,j}>0$ so that $\ln\left( \frac{a_{i,j}}{x_{i,j}}\right)=-\infty$. The claim then follows from the Birkhoff-von Neumann Theorem which implies that any doubly stochastic matrix can be written as a convex combination of permutation matrices.
Also, results presented in Section \ref{sec:LBG} follow by taking for the matrix $A$, the incidence matrix of the graph $G$.

\subsection{Number of matchings in random lifts}\label{sec:rl}

As in previous section, $G=(V,E)$ is a fixed connected multigraph with
no loops. 
A random $n$-lift of $G$ is a random graph on vertex set $V_1\cup
V_2\cup \dots \cup V_{v(G)}$, where each $V_i$ is a set of $n$
vertices and these sets are pairwise disjoint, obtained by placing a
uniformly chosen random perfect matching between $V_i$ and $V_j$,
independently for each edge $e=ij$ of $G$. We denote the resulting
graph $L_n(G)$.

Our main result shows that the lower bounds derived in Section \ref{sec:LBG} are indeed attained by a sequence of random lifts. More
precisely, we have
\begin{theorem}\label{th:rl}
For any finite graph $G$, we have
\BEAS
\lim_{n\to \infty}\frac{1}{n} \nu(L_n(G)) &=& \nu^*(G)\quad a.s.\\
\forall z> 0,\quad \lim_{n\to \infty}\frac{1}{n}\ln P_{L_n(G)}(z)&=&
\max_{\bx \in FM(G)} \left\{\left( \sum_e x_e\right)\ln z+S^B_G(\bx)\right\}\quad a.s.
\EEAS
where $S^B_G(\bx)$ is defined in (\ref{def:SB}).
We denote by $FPM(G)=FM_{\nu^*(G)}(G)=\{\bx,\:\sum_{e\in \d v }x_e=1\}$
the fractional perfect matching polytope of $G$, then we have
\BEAS
\lim\sup_{n\to \infty} \frac{1}{n} \ln m_{\nu(L_n(G))}(L_n(G)) &\leq&\sup_{\bx\in FPM(G)}S^B_G(\bx).
\EEAS
If, in addition, $G$ is bipartite, then the fractional perfect matching
polytope is simply the perfect matching polytope $PM(G)$ of $G$ and
\BEAS
\lim_{n\to \infty} \frac{1}{n} \ln m_{\nu(L_n(G))}(L_n(G)) &=&\sup_{\bx\in PM(G)}S^B_G(\bx).
\EEAS
\end{theorem}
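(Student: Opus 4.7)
My plan is to combine the Benjamini--Schramm convergence of random $n$-lifts with the variational formula for the Bethe free energy of the monomer--dimer model on trees, leveraging the statistical-physics framework of Section \ref{sec:statph} and the local recursions of Section \ref{sec:locrec}. First I show that $L_n(G)$, rooted at a uniformly chosen vertex, converges almost surely in the local weak sense to the universal cover $T_G$ of $G$, rooted at a lift of a uniform vertex of $G$. This is a standard first-moment argument: for any fixed radius $r$, the expected number of short cycles in the $r$-neighbourhood of a uniform vertex of $L_n(G)$ tends to $0$ as $n\to\infty$, so this neighbourhood is tree-like and matches the corresponding ball in $T_G$.

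Next I appeal to the continuity of the per-vertex monomer--dimer free energy under local weak convergence, proved in the author's prior work \cite{lelarge2014loopy}. Combined with Step 1, this yields
\[
\lim_{n\to\infty}\frac{1}{n\,v(G)}\ln P_{L_n(G)}(z) \;=\; \Phi_G(z)\quad\text{a.s.},
\]
where $\Phi_G(z)$ is the per-vertex free energy on $T_G$. The core identity to establish is
\[
v(G)\,\Phi_G(z) \;=\; \max_{\bx \in FM(G)}\left\{\Bigl(\sum_e x_e\Bigr)\ln z + S^B_G(\bx)\right\}.
\]
The message-passing recursion on $T_G$ is $\mathrm{Aut}(T_G)$-equivariant, hence its fixed points are parametrised by messages on the oriented edges of $G$ itself; the substitution $x_{uv} = z\phi_{u\to v}\phi_{v\to u}/(1+z\phi_{u\to v}\phi_{v\to u})$ maps these bijectively to elements of $FM(G)$, and a direct computation matches the Bethe free energy at such a fixed point with the bracketed expression above. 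Concavity of $S^B_G$ (Proposition \ref{prop:conc}) forces the maximiser to be unique.

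The remaining statements then follow. For the matching number, the upper bound $\nu(L_n(G))\le n\nu^*(G)$ holds because projecting any matching $M\subseteq E(L_n(G))$ to $G$ via $x_e = |M\cap\pi^{-1}(e)|/n$ produces an element of $FM(G)$; the lower bound is extracted from the partition-function convergence by Legendre duality in $z$. For the maximum matching count, the upper bound uses $m_{\nu(L_n(G))}(L_n(G))\,z^{\nu(L_n(G))} \le P_{L_n(G)}(z)$ followed by $z\to\infty$, which forces the optimiser onto $FPM(G)$. When $G$ is bipartite, the matching lower bound is obtained from Theorem \ref{th:main} applied directly to $L_n(G)$ (itself bipartite) with $k=\nu(L_n(G))$: any $\bx\in PM(G)$ lifts trivially to a fractional perfect matching of $L_n(G)$ with entropy $n\,S^B_G(\bx)$, while the binomial prefactor equals $1$.

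The principal obstacle is the algebraic identification of $\Phi_G(z)$ with the finite-dimensional variational formula on $FM(G)$: while existence of the limit is supplied by \cite{lelarge2014loopy}, its explicit evaluation demands a careful analysis of the $\mathrm{Aut}(T_G)$-equivariant fixed points of the monomer--dimer recursion on $T_G$ and the matching of the resulting Bethe free energy to the entropy functional $S^B_G$ after the substitution above; once this is in hand the remaining statements of the theorem reduce to convex-analytic manipulations.
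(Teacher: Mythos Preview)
Your proposal is correct and follows essentially the same architecture as the paper: local weak convergence of $L_n(G)$ to the universal cover (the paper's Proposition \ref{prop:lwrl}), continuity of the free energy under local weak convergence together with the identification of the limit as the Bethe variational formula on $FM(G)$ (the paper's Theorem \ref{th:lwclim} and Proposition \ref{prop:max}), the trivial bound $m_{\nu_n}z^{\nu_n}\le P_{L_n(G)}(z)$ followed by $z\to\infty$ for the upper bound on the maximum-matching count, and Theorem \ref{th:main} applied to the bipartite lift for the matching lower bound.

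Two small remarks. First, the continuity of the free energy that you attribute to \cite{lelarge2014loopy} is in fact taken from \cite{bls12} in the paper; the role of \cite{lelarge2014loopy} is rather in the identification of the fixed point of $z\Rcal_G$ with the maximiser of $\Phi^B_G$ on $FM(G)$. Second, your projection argument for $\nu(L_n(G))\le n\nu^*(G)$ is actually cleaner than what the paper does (which routes through \cite{bls12}), and your Legendre-duality extraction of the matching-number lower bound from the partition-function limit is valid but requires the easy observation that $\frac{1}{n}\ln P_{L_n(G)}(z)\le \frac{\nu_n}{n}\ln z + O(1)$ uniformly in $n$, which you should make explicit.
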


In \cite{linroz05}, Linial and Rozenman studied the existence of a
perfect matching in $L_n(G)$. Note that if the number of vertices in
$G$ is odd, then in order to have a perfect matching in a $n$-lifts of
$G$, we need to have $n$ even.
For $n$ even, they described a large class of graphs $G$
for which $L_n(G)$ contains a perfect matching asymptotically almost
surely. This class contains all regular graphs and, in
turn, is contained in the class of graphs having a fractional perfect
matching, i.e. graphs $G$ such that $2\nu^*(G)=v(G)$. 
Our result shows that in this case, $L_n(G)$ will contain an almost
perfect matching (possibly missing $o(n)$ vertices) almost surely. If
in addition the graph $G$ is bipartite, the number of such matchings
is exponential in $n$.

In \cite{gjr10}, the number of perfect matchings in $L_n(G)$, denoted
by $\mathrm{pm}(L_n(G))$, is
studied in the limit $n\to \infty$ (along a subsequence ensuring the
existence of a perfect mating), where $G$ is a graph with a
fractional perfect matching. Using the small subgraph conditioning
method, an asymptotic formula for $\EE[\mathrm{pm}(L_n(G))]$ is
computed for any connected regular multigraph $G$ with degree at least
three. Partial results are also given for $\EE[\mathrm{pm}(L_n(G))^2]$
with an explicit formula based on a conjecture (proved only for
$3$-regular graphs).

Note that we always have $\nu(L_n(G))\geq n\nu(G)$ (by lifting a
maximum matching of $G$). In particular, if $G$ has a perfect matching
then $\nu(L_n(G)) = nv(G)/2$. Hence, Theorem \ref{th:rl} implies that
for any $d$-regular bipartite graph $G$, we have
\BEAS
\lim_{n\to \infty} \frac{1}{n} \ln \mathrm{pm} (L_n(G)) &=&
\frac{v(G)}{2}\ln \left(\frac{(d-1)^{d-1}}{d^{d-2}} \right).
\EEAS
This result is consistent with \cite{gjr10}. Indeed by Jensen
inequality, we always have
\BEAS
\EE\left[\ln \mathrm{pm} (L_n(G))\right] \leq \ln\EE\left[ \mathrm{pm} (L_n(G))\right],
\EEAS
and our result shows that in the large $n$ limit, the two quantities
are asymptotically equal.
Note that the fact that $\EE\left[ \mathrm{pm} (L_n(G))^2\right] \sim
\EE\left[ \mathrm{pm} (L_n(G))\right]^2$ to leading exponential order
is not sufficient to prove this asymptotic equality.

A similar result to Theorem \ref{th:rl} was shown for permanent in
\cite{von13,MR3096976}. It is possible to define a random $k$-lift for
a matrix $A$ (by taking the weighted biadjacency matrix of the random
$k$-lift of the weighted graph associated to the biadjacency matrix $A$). 
It is shown in \cite{von13,MR3096976} that $\lim\sup_{k\to\infty} \frac{1}{k}\EE[\ln \per(L_k(A))] = \max_{x\in
  M_{n,n}}\sum_{i,j}(1-x_{i,j})\ln(1-x_{i,j})+x_{i,j}\ln \left( \frac{a_{i,j}}{x_{i,j}}\right)$.
Since \cite{smarandache2015bounding} recently showed that for $A_k$
any $k$-lift of $A$, we have $\per(A_k) \leq \per(A)^k$, they obtained
a new proof of (\ref{eq:lnper}). 

\section{Main ideas of the proof and more related works}\label{sec:idea}

Recall that $\ln P_G(z) =\ln \sum_k m_k(G)z^k$ is called the partition
function.
A crucial observation made by Csikv\'ari \cite{csikvari2014lower} is
that for any $2$-lift $H$ of a bipartite graph $G$, we have
$\frac{1}{v(G)}\ln P_G(z)\geq \frac{1}{v(H)}\ln P_H(z)$, see
Proposition \ref{prop:lift} in the sequel. 
Note that a $2$-lift of a $d$-regular graph is still
$d$-regular. Starting form any $d$-regular bipartite graph $G$,
Csikv\'ari builds a sequence of $2$-lifts with increasing girth. Then
Csikv\'ari uses the framework of local weak convergence in order to
define a limiting graph for the sequence of $2$-lifts.
In this particular case, the limit is the infinite $d$-regular
tree. Using the connection between the local weak convergence and the
matching measure of a graph developed in \cite{abert2014matching},
Csikv\'ari computes a limiting partition function associated to this
infinite $d$-regular tree which is obtained from the Kesten-McKay
measure. This limiting partition function is then a lower bound for
the original partition function $\ln P_G(z)$ and the lower bounds
(\ref{eq:matchk}) and (\ref{eq:conjlm}) are then easily obtained by
properly choosing the parameter $z$ as a function of $k$ the size of
the matchings that we need to count.

Our work extends the recent work of Csikv\'ari done for regular and bi-regular bipartite graphs to irregular bipartite graphs. We are still building a sequence of $2$-lifts with increasing girth and still using the framework of local weak convergence. The limiting object is now the universal cover of the initial graph $G$, i.e. the tree of non-backtracking walks also called the computation tree (see Section \ref{sec:lift} for a precise definition). A direct computation of the matching measure of this (possibly infinite) tree seems tedious. 
Here, we depart significantly from the analysis of Csikv\'ari. Our
approach for the computation of the limiting partition function is
based on an alternative (more algorithmic) characterization first
developed in \cite{bls12} based on local recursions on the universal
cover of $G$. In order to express the computations done on the
universal cover as a function of the original graph $G$, we rely on
results proved by the author in \cite{lelarge2014loopy} where the
local recursions are studied on any finite graph. The solution of
these local recursions on the universal cover (hence a possibly infinite tree)
is in correspondence with the solution of the local recursions on the
initial graph. Since this solution is given by the maximum of a
certain ``entropy-like'' concave function defined by (\ref{def:SB}) on
the fractional matching polytope of the original graph, we obtain an
explicit formula for the limiting partition function and the lower
bound in Theorem \ref{th:main} follows. Our approach is then
generalized to weighted bipartite graphs in order to get our results
for (sub-)permanents (Theorem \ref{th:maingen}) and to random lifts in
order to get Theorem \ref{th:rl}.

As explained in the sequel of this section, all the basic ideas used
in our proofs were present in a form or another in the literature:
using lifts for extremal graph theory was one the main motivation for
their introduction in a series of papers by Amit, Linial, Matousek,
Rozenman and Bilu \cite{MR1883559,linroz05,MR1871947,MR2279667}; the
matching measure and the local recursions already appeared in the
seminal work of Heilmann and Lieb \cite{heilmannlieb}; the function
$S^B_G$ defined in (\ref{def:SB}) is known in statistical physics as
the Bethe entropy \cite{zdeborovamezard}. The main contribution of
this paper is a conceptual message showing how known techniques from
interdisciplinary areas can lead to new applications in theoretical
computer science. In the next subsections, we will try to relate our
results to the existing literature and give credit to the many authors
who inspired our work.

\subsection{Covers, extremal graph theory and message passing algorithms}

The idea to use lifts to build graphs with extremal properties is not new. In \cite{MR2279667}, Bilu and Linial study $2$-lift of $d$-regular graphs in order to construct infinite families of expanders. They showed that the eigenvalues of a $2$-lift are the union of the eigenvalues of the original graph and those of the signing associated to the $2$-lift. They conjectured that every $d$-regular graph has a signing with spectral radius at most $2\sqrt{d-1}$. This conjecture was proved by Marcus, Spielman and Srivastava in \cite{MR3374962} where they construct bipartite Ramnujan graphs of all degree.

We can informally state our Theorem \ref{th:main} as an extremal graph theoretic result: among all bipartite graphs $G$ having universal cover $T$, the universal cover $T$ minimizes the (normalized) partition function $\frac{1}{v(G)}\ln P_G(z)$ for all $z>0$, in particular it minimizes the (normalized) number of matchings $\frac{1}{v(G)} \sum_k m_k(G)$. Of course, $T$ being infinite, the normalized partition function needs to be defined properly and this can be done thanks to the local weak convergence \cite{bls12}. Indeed in the proof of Theorems \ref{th:main} and \ref{th:rl}, we will prove:

\begin{proposition}\label{prop}
Let $G$ be a finite bipartite graph and $T$ be its universal cover. Let $\mathcal{G}$ be the set of finite bipartite graphs with universal cover $T$. We have for all $z>0$,
\begin{eqnarray}
\label{eq:ext}\inf_{G'\in \mathcal{G}}\frac{1}{v(G')}\ln P_{G'}(z) = \frac{1}{v(G)}\max_{\bx \in M(G)} \left\{\left( \sum_e x_e\right)\ln z+S^B_G(\bx)\right\}. 
\end{eqnarray}
Moreover, the sequence $(G_n)$ defined in Theorem \ref{th:main} or the
sequence $L_n(G)$ converge in the local weak sense to $T$ and achieves
the bound (\ref{eq:ext}) in the limit when $n$ tends to infinity.
\end{proposition}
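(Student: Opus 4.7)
The plan is to prove the two inequalities of (\ref{eq:ext}) separately, using as the key auxiliary fact that the quantity
\begin{equation*}
\Phi(G,z) := \frac{1}{v(G)} \max_{\bx \in M(G)} \left\{ \left( \sum_e x_e \right) \ln z + S^B_G(\bx) \right\}
\end{equation*}
depends only on the universal cover $T$ of $G$. With this in hand, Theorem \ref{th:main} can be read as $v(G')^{-1}\ln P_{G'}(z) \geq \Phi(G,z)$ uniformly in $G' \in \mathcal{G}$, and the sequences furnished by Theorems \ref{th:main} and \ref{th:rl} will show this bound is tight.

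The invariance step is the technical core. I would prove it in two parts. For any finite covering map $\pi : H \to G$ of degree $a$, the direction $\Phi(H,z) \geq \Phi(G,z)$ follows by lifting any $\bx_G \in M(G)$ to $\bx_H$ via $x_{H,e'} = x_{G,\pi(e')}$, which rescales both the linear and the $S^B$ terms by $a$. The reverse direction $\Phi(H,z) \leq \Phi(G,z)$ uses fiber averaging: define $x_{G,e} = a^{-1}\sum_{e' \in \pi^{-1}(e)} x_{H,e'}$, which lies in $M(G)$ because $G$ is bipartite (so $M(G)=FM(G)$) and the degree constraints descend to $G$; the concavity of the per-edge and per-vertex terms in (\ref{def:SB}) noted in Proposition \ref{prop:conc} then gives $S^B_G(\bx_G) \geq a^{-1} S^B_H(\bx_H)$, while the linear term is preserved. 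Applied to a finite common cover of any two graphs in $\mathcal{G}$ (whose existence is provided by Leighton's theorem), this yields $\Phi(G,z) = \Phi(G',z)$. Alternatively, one may simply invoke the local-recursion characterization of the maximum that drives the proof of Theorem \ref{th:main}: the recursion lives on $T$, so its solution is manifestly a cover invariant.

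From here the two inequalities follow quickly. The $\geq$ inequality is immediate: for every $G' \in \mathcal{G}$, Theorem \ref{th:main} together with the invariance gives $v(G')^{-1}\ln P_{G'}(z) \geq \Phi(G',z) = \Phi(G,z)$. For the $\leq$ inequality, the iterated $2$-lift sequence $(G_n)$ of Theorem \ref{th:main} lies in $\mathcal{G}$ (a $2$-lift of a graph with universal cover $T$ still has universal cover $T$) and achieves $\Phi(G,z)$ in the limit; the same holds for the random lifts $L_n(G)$ through Theorem \ref{th:rl}, since $FM(G)=M(G)$ in the bipartite case. The local weak convergence $G_n \Rightarrow T$ and $L_n(G) \Rightarrow T$ then reduces to the assertion that the girths of these sequences tend to infinity: for $(G_n)$ this is built into the construction, and for $L_n(G)$ it follows from a routine first-moment estimate on short cycles in random $n$-lifts.

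The main obstacle is the cover-invariance step. The projection half is where subtleties enter: one must check that fiber averaging delivers a fractional matching genuinely in $M(G)$ (which relies crucially on bipartiteness) and that concavity of (\ref{def:SB}) is applied correctly term-by-term with the right normalization. Everything else is bookkeeping built on the already-established Theorems \ref{th:main} and \ref{th:rl}.
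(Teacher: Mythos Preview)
Your argument is correct. The one genuine point of departure from the paper is the cover-invariance step. The paper establishes that $\Phi(G,z)$ depends only on $T$ through the local-recursion machinery: by Proposition~\ref{prop:max} the maximum is attained at the vector $\bx(z)$ built from the unique fixed point $\by(z)=z\Rcal_G(\by(z))$, and by Proposition~\ref{prop:same} this fixed point is the projection of the fixed point on $T(G)$, so it is determined by the universal cover alone. Your primary route instead proves invariance directly by fiber averaging: lift to get $\Phi(H,z)\geq\Phi(G,z)$, project using Jensen's inequality termwise on the concave edge and vertex contributions of $S^B$ to get the reverse inequality, and close with Leighton's common-cover theorem. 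This is more elementary in that it avoids the fixed-point uniqueness analysis, at the cost of importing Leighton's theorem as a black box; the paper's route is more constructive and yields the explicit maximizer, which it needs anyway for Theorem~\ref{th:lwclim}. One minor remark: your appeal to Proposition~\ref{prop:conc} is slightly imprecise, since what the projection step actually uses is the concavity of the individual one-variable functions $x\mapsto -x\ln x+(1-x)\ln(1-x)$ and $s\mapsto -(1-s)\ln(1-s)$ applied fiber by fiber, rather than concavity of $S^B_G$ on $FM(G)$ as a whole; but this is of course immediate. The remainder of your outline (both inequalities from Theorems~\ref{th:main} and~\ref{th:rl}, membership of $G_n$ and $L_n(G)$ in $\mathcal{G}$, and local weak convergence via Propositions~\ref{prop:lwl} and~\ref{prop:lwrl}) matches the paper.
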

Note that $G_1,G_2\in \mathcal{G}$ if and only if $G_1$ and $G_2$ have
a common finite cover which is a result proved in \cite{MR693362}. Also, the right-hand term in (\ref{eq:ext}) is an invariant of $\mathcal{G}$: this expression will be the same for any graph belonging to $\mathcal{G}$. Indeed our proof will proceed by computing its value thanks to the following message passing algorithm: to each edge $uv\in E$ and time step $t$, we associate two messages $y^t_{u\to v}(z)$ and $y^t_{v\to u}(z)$ obtained by setting $y^0_{u\to v}=y^0_{v\to u}=0$ and for $t\geq 0$,
\BEA
\label{eq:recy}y^{t+1}_{u\to v}(z) = \frac{z}{1+\sum_{w\in \d u\bac v}y^{t}_{w\to u}(z)}.
\EEA
We show that as $t\to\infty$, these iterations will converge to a limit $y_{u\to v}(z)$ and that $x_{uv}(z)=\frac{y_{u\to v}(z) y_{v\to u}(z)}{z+y_{u\to v}(z) y_{v\to u}(z)}$ solves the maximization in (\ref{eq:ext}) (see Propositions \ref{prop:xefin}, \ref{prop:xe}, \ref{prop:max}). Since the recursion (\ref{eq:recy}) is local, the messages obtained after $t$ iterations are the same as the one computed on the computation tree of the graph at depth $t$.
We are able to show that these recursions on infinite trees still have a unique fixed point (see Theorem \ref{th:infint}) so that this fixed point should be the limit (as the number of iterations tend to infinity) obtained by our algorithm runned on the original graph $G$. 
Note that the recursion (\ref{eq:recy}) is well-known and first appeared in the analysis of the monomer dimer problem \cite{heilmannlieb}. It is also used to define a deterministic approximation algorithm for counting matchings in \cite{stoc07}. Indeed, their analysis directly implies that the convergence of our algorithm is exponentially fast in the number of iterations $t$. Note however that the recursion used in \cite{stoc07} corresponds to messages sent on the tree of self-avoiding paths. Instead, we use the tree of non-backtracking paths. The tree of self-avoiding paths is finite and depends on the root whereas our tree is the universal cover of the graph. Also, \cite{stoc07} directly implies the convergence of our message passing algorithm, it does not give any indication about the value of the limit.

At this stage, we should recall that computing the number of matchings falls into the class of $\#P$-complete problems as well as the problem of counting the number of perfect matchings in a given bipartite graph, i.e. computing the permanent of an arbitrary $0-1$ matrix.
By previous discussion, we see that if the graph is locally tree like, then the tree of self-avoiding paths and the universal cover are locally the same, and one can believe that our algorithm will compute a good approximation for counting matchings. This idea was formalized in \cite{zdeborovamezard} and proved rigorously in \cite{bls12} for random graphs. Our Theorem \ref{th:rl}  shows that these results extend to random lifts.
The lower bound in (\ref{eq:lnper})
is called the (logarithm of the) Bethe permanent in the physics literature
\cite{watanabe2010belief,chertkov2013approximating,von13}. 
Similar ideas
using lifts or covers of graphs have appeared in the literature about
message passing algorithms, see \cite{NIPS2012_4649,2013arXiv1309.6859R} and references
therein. We refer to \cite{lelarge2014loopy} for more results
connecting Belief Propagation with our setting.

\subsection{Matching measure and spectral measure of trees}

We now relate our results to the matching measure used by Csikv\'ari
in \cite{csikvari2014lower} and show how our results allow us to compute spectral measure of infinite trees.
The matching polynomial is defined as:
\BEAS
Q_G(z) = \sum_{k=0}^{\nu(G)} (-1)^k m_k(G) z^{n-2k} = z^nP_{G}(-z^{-2}).
\EEAS
We define the matching measure of $G$ denoted by $\rho_G$ as the uniform distribution over the roots of the matching polynomial of $G$:
\BEA
\rho_G = \frac{1}{\nu(G)}\sum_{i=1}^{\nu(G)} \delta_{z_i},
\EEA
where the $z_i$'s are the roots of $Q_G$.
Note that $Q_G(-z) = (-1)^nQ_G(z)$ so that $\rho_G$ is symmetric.

The fundamental theorem for the matching polynomial is the following.
\begin{theorem}\label{th:hl}(Heilmann Lieb \cite{heilmannlieb})
The roots of the matching polynomial $Q_G(z)$ are real and in the interval $[-2\sqrt{D_G-1},2\sqrt{D_G-1}]$, where $D_G$ is the maximal degree in $G$.
\end{theorem}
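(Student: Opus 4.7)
The plan is to prove Theorem \ref{th:hl} by reducing to the case of a tree, where the matching polynomial coincides with the characteristic polynomial of the adjacency matrix and reality of roots is classical linear algebra. The right tree for this reduction is Godsil's \emph{path tree} $T = T(G,v)$ rooted at an arbitrary vertex $v$: its vertices are the self-avoiding walks in $G$ starting at $v$, with two walks adjacent iff one extends the other by one step. This is distinct from the non-backtracking universal cover used elsewhere in the paper, but both are trees of maximum degree at most $D_G$, and the path tree is the cleaner object for this particular result.

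First I would establish Godsil's identity
\[
\frac{Q_G(z)}{Q_{G-v}(z)} \;=\; \frac{Q_T(z)}{Q_{T-v}(z)},
\]
identifying $v$ with the root of $T$. The proof is by strong induction on $|V(G)|$, starting from the vertex-deletion recursion $Q_H(z) = z\,Q_{H-v}(z) - \sum_{u \in \d v} Q_{H-v-u}(z)$, which follows by splitting matchings according to whether $v$ is covered. After dividing by $Q_{H-v}$, this becomes a continued-fraction-type relation for the ratio $Q_H/Q_{H-v}$ in terms of the ratios $Q_{H-v}/Q_{H-v-u}$, and the inductive hypothesis applied to $G-v$ (and to each subtree $T_u$ of $T-v$, which by construction equals $T(G-v,u)$) matches the two sides. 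The identity forces $Q_G$ to divide $Q_T$, so the roots of $Q_G$ form a subset of those of $Q_T$. For a forest, the matching polynomial coincides with the characteristic polynomial of the adjacency matrix, because every non-zero term in the permutation expansion of $\det(z\,\Id - A(T))$ corresponds to a disjoint union of edges and isolated vertices. Hence the roots of $Q_T$ are eigenvalues of the real symmetric matrix $A(T)$, and the roots of $Q_G$ are real.

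For the interval bound, I would note that each non-root vertex of $T(G,v)$ has at most $D_G - 1$ children, because a self-avoiding walk cannot return to its immediate predecessor. Thus $T$ embeds as a subgraph of the infinite $D_G$-regular tree $T_{D_G}$, and the operator norm of $A(T)$ is at most that of $A(T_{D_G})$, which equals $2\sqrt{D_G - 1}$ by the classical return-walk computation on $T_{D_G}$ (the number of length-$2k$ closed walks at the root grows like $(2\sqrt{D_G-1})^{2k}$ to leading exponential order). Combining with the reality step, every root of $Q_G$ lies in $[-2\sqrt{D_G-1},\,2\sqrt{D_G-1}]$.

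The main technical step is the verification of Godsil's identity: one must check that the recursive expansion correctly absorbs those walks in $G$ which would produce cycles, since such walks are forbidden in the tree $T$. Everything else --- the matching/characteristic polynomial equivalence on forests, reality of eigenvalues of symmetric matrices, and the spectral radius of the $D_G$-regular tree --- is standard once the path-tree reduction is in hand. An alternative avoiding path trees would induct directly on $|V(G)|$ using the vertex-deletion recursion together with an interlacing hypothesis for $Q_G$ and $Q_{G-v}$; the reality of roots drops out cleanly, but recovering the explicit constant $2\sqrt{D_G-1}$ becomes more delicate, which is why I would prefer the tree-reduction route.
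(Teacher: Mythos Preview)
The paper does not give its own proof of this theorem: it is stated as a classical result with attribution to Heilmann and Lieb \cite{heilmannlieb} and used as a black box (for instance, to ensure that the sequence $(w_k(G)z^k/P_G(z))_k$ in Section~\ref{sec:maingen} is a P\'olya frequency sequence). So there is no ``paper's approach'' to compare against.

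Your argument via Godsil's path tree is correct and is one of the standard modern proofs. A couple of remarks. First, the step ``the identity forces $Q_G$ to divide $Q_T$'' is true but does not follow from the ratio identity alone; it needs its own induction (showing $Q_{G-v}\mid Q_{T-v}$ via the decomposition $T-v=\bigsqcup_{u\sim v}T(G-v,u)$, then cancelling $Q_{G-v}$ in $Q_G\,Q_{T-v}=Q_T\,Q_{G-v}$). You may be folding this into your strong induction, but it is the one place where a reader could object. Second, for the interval bound you use that the finite tree $T(G,v)$ embeds in the infinite $D_G$-regular tree; to turn this into a spectral bound you are implicitly invoking that $A(T)$ is (after embedding) a finite principal submatrix of the $\ell^2$-operator $A(T_{D_G})$, so its eigenvalues are bounded by $\|A(T_{D_G})\|=2\sqrt{D_G-1}$. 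That is fine.

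For context, the original Heilmann--Lieb proof is closer to the alternative you sketch at the end: a direct induction on $|V(G)|$ using the vertex-deletion recursion to show that the roots of $Q_{G-v}$ interlace those of $Q_G$, which yields reality; the constant $2\sqrt{D_G-1}$ is then extracted by analyzing the recursion for the ratio $Q_G/Q_{G-v}$. Godsil's route, which you chose, makes the constant essentially immediate once the tree reduction is in place.
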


In particular, the matching measure of $G$ is a probability measure on $\R$.
Of course, the polynomials $P_G(z)$ or $Q_G(z)$ contains the same information as the matching measure $\rho_G$. We can express the quantity of interest in term of $\rho_G$ (see Lemma 8.5 in \cite{heilmannlieb}, \cite{abert2014matching} or \cite{csikvari2014lower}): for $z>0$,
\BEAS
\frac{1}{v(G)}\frac{z P_G'(z)}{P_G(z)} &=& \frac{1}{2}\int \frac{z\lambda^2}{1+z\lambda^2}d\rho_G(\lambda),\\
\frac{\nu(G)}{v(G)} &=& \frac{1}{2}\left(1-\rho_G(\{0\})\right).
\EEAS

As explained above, Csikv\'ari \cite{csikvari2014lower} uses this representation and the fact that for a sequence of $d$-regular graphs converging to a $d$-regular tree, the limiting matching measure is given by the Kesten-MacKay measure, to get an explicit formula for the limiting partition function. Our approach relies on local recursions instead of the connection with the matching measure. Since we are able to solve these recursions, we get the following result for the limiting matching measure.

\begin{theorem}\label{th:match}
Let $G$ be a finite graph and $T(G)$ be its universal cover. For any
sequence of graphs $(G_i)$ with maximal degree $D_G$ and with local weak limit $T(G)$, the matching measure $\rho_{G_i}$ of the graph $G_i$ is weakly convergent to some measure $\mu_{T(G)}$ defined by the formula for $z>0$,
\BEAS
\int \frac{z\lambda^2}{1+z\lambda^2}d\mu_{T(G)}(\lambda) = \frac{2}{v(G)}\left(\sum_e x_e(z)\right),
\EEAS
where the vector $\bx(z)$ is the unique maximizer of $\Phi^B_G(\bx,z)$ in $FM(G)$. Moreover, we have
$\mu_{T(G)}(\{0\}) = 1-2\frac{\nu^*(G)}{v(G)}$.
\end{theorem}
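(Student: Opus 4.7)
My plan hinges on the classical identity (Lemma 8.5 of \cite{heilmannlieb})
$$\frac{1}{v(G_i)}\frac{zP_{G_i}'(z)}{P_{G_i}(z)} \;=\; \frac{1}{2}\int \frac{z\lambda^2}{1+z\lambda^2}\,d\rho_{G_i}(\lambda),$$
together with convergence of the left-hand side to the derivative of the limiting log-partition function. Write $\Psi(z):=\frac{1}{v(G)}\max_{\bx\in FM(G)}\{(\sum_e x_e)\ln z+S^B_G(\bx)\}$ and let $\bx(z)$ be its unique maximizer. The first step is to show that $\frac{1}{v(G_i)}\ln P_{G_i}(z)\to \Psi(z)$ for every $z>0$ whenever $G_i$ has maximum degree $\le D_G$ and converges locally weakly to $T(G)$. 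This is precisely the Bethe-limit statement behind the proofs of Theorems \ref{th:main} and \ref{th:rl}: the local recursion \eqref{eq:recy} on the universal cover admits a unique fixed point (Theorem \ref{th:infint}), it converges exponentially fast uniformly in the graph (as in \cite{stoc07}), and hence the normalized log-partition function is an asymptotically local functional whose value is determined by the law of the $t$-ball around a uniformly chosen root. Benjamini--Schramm convergence $G_i \to T(G)$ therefore transports the computation to the universal cover and yields $\Psi(z)$, as in Section \ref{sec:lwc}.

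Next I upgrade pointwise convergence of $\frac{1}{v(G_i)}\ln P_{G_i}$ to convergence of the integrals against $f_z(\lambda):=z\lambda^2/(1+z\lambda^2)$. By Heilmann--Lieb (Theorem \ref{th:hl}) all roots of $P_{G_i}$ are real, so $P_{G_i}(z)=\prod_j (1+\alpha_j^{(i)}z)$ with $\alpha_j^{(i)}\in[0,4(D_G-1)]$; consequently $z\mapsto \frac{1}{v(G_i)}\ln P_{G_i}(z)$ is concave on $(0,\infty)$ with derivative uniformly bounded on compact subsets. The limit $\Psi$ is concave and differentiable, and the envelope theorem applied at $\bx(z)$ gives $z\Psi'(z)=\frac{1}{v(G)}\sum_e x_e(z)$. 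The standard fact that pointwise convergence of concave functions forces pointwise convergence of derivatives at points where the limit is differentiable now yields
$$\frac{1}{2}\int f_z\,d\rho_{G_i} \;\longrightarrow\; z\Psi'(z) \;=\; \frac{1}{v(G)}\sum_e x_e(z).$$

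To pin down the limit of $\rho_{G_i}$ itself, note that Heilmann--Lieb makes the family tight in the fixed compact interval $[-2\sqrt{D_G-1},2\sqrt{D_G-1}]$, so every subsequence has a weakly convergent sub-subsequence whose limit $\mu$ satisfies $\int f_z\,d\mu = \frac{2}{v(G)}\sum_e x_e(z)$ for all $z>0$. Since $\int f_z\,d\mu = 1 - \int(1+z\lambda^2)^{-1}d\mu(\lambda)$ is, up to a change of variable, the Laplace transform of the pushforward of $\mu$ by $\lambda\mapsto\lambda^2$, this data determine the symmetric measure $\mu$ uniquely; hence the full sequence $\rho_{G_i}$ converges weakly to one and the same measure $\mu_{T(G)}$. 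For the atom at zero I let $z\to\infty$: dominated convergence gives $\int f_z\,d\mu_{T(G)} \to 1-\mu_{T(G)}(\{0\})$, while on the other side the $\ln z$ term dominates the bounded entropy $S^B_G$ and forces $\sum_e x_e(z)\to \nu^*(G)$, so $\mu_{T(G)}(\{0\}) = 1-\tfrac{2\nu^*(G)}{v(G)}$.

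The main obstacle is the first step: the explicit Bethe-limit identity for $\frac{1}{v(G_i)}\ln P_{G_i}(z)$ is proved in the preceding sections only for $2$-lift sequences and for random lifts, and extending it to an arbitrary locally converging bounded-degree sequence requires packaging the correlation-decay estimate for \eqref{eq:recy} into a genuinely local, graph-independent statement. Once this is in place, the concavity and uniqueness arguments above are essentially mechanical.
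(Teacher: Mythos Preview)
Your argument is correct and matches the paper's route almost exactly: the key identity from \cite{heilmannlieb}, the Bethe limit for $\frac{1}{v(G_i)}\ln P_{G_i}(z)$, tightness on $[-2\sqrt{D_G-1},2\sqrt{D_G-1}]$, and the moment/Stieltjes-type uniqueness are precisely what the paper uses (the remark right after the theorem makes the moment determination explicit). The one point worth correcting is your ``main obstacle'': it is not an obstacle at all. Theorem~\ref{th:lwclim} is stated and proved for an \emph{arbitrary} bounded-degree sequence $G_n\lwc (T(G),o)$, not only for $2$-lifts or random lifts; its proof invokes Theorem~6 of \cite{bls12}, which already gives $\frac{1}{|V_n|}U^s_{G_n}(z)\to \frac{1}{v(G)}U^B_G(\bx(z))$ directly from local weak convergence (no need for the \cite{stoc07} correlation-decay packaging). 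In particular your detour through concavity of $z\mapsto \frac{1}{v(G_i)}\ln P_{G_i}(z)$ to recover convergence of derivatives, while perfectly valid, is unnecessary: the paper obtains $\int f_z\,d\rho_{G_i}\to \frac{2}{v(G)}\sum_e x_e(z)$ straight from \eqref{eq:UG}, and the atom at zero from \eqref{eq:nu}.
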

Note that our theorem gives a generating function of the moments of $\mu_{T(G)}$ since for $z>0$ sufficiently small, we have:
\BEAS
\int \frac{z\lambda^2}{1+z\lambda^2}d\mu_{T(G)}(\lambda) = \sum_{i=1}^\infty (-1)^{i+1}{z^i}\int \lambda^{2i}d\mu_{T(G)}(\lambda),
\EEAS
and the series is convergent since the support of all the $\rho_{G_i}$ and hence of $\mu_{T(G)}$ is contained in $[-2\sqrt{D_G-1},2\sqrt{D_G-1}]$.
As shown by Godsil in \cite{god81}, for finite trees, the spectral
measure and the matching measure coincide, this is still true for
infinite trees \cite{res10,bls11,bls12}. In particular, the moments of
the matching measure $\int \lambda^{2i}d\mu_{T(G)}(\lambda)$ can be
interpreted as the average number of closed walks on $T(G)$ where the
average is taken over the starting point of the walk (see Proposition
\ref{prop:lwl} for a precise definition of the random root as the
starting point of the walk).
 
To be more precise, for a finite graph $G$, we denote by $\lambda_1\leq
\dots\leq \lambda_{v(G)}$ the real eigenvalues of its adjacency matrix
and we define the empirical spectral measure of the graph $G$ as the
probability measure on $\RR$:
\BEAS
\mu_G = \frac{1}{v(G)}\sum_{i=1}^{v(G)} \delta_{\lambda_i}.
\EEAS

The following theorem follows from \cite{res10,bls11,bls12} (see also
Chapter 2 in \cite{hdr})
\begin{theorem}\label{th:spec}
Let $G$ be a finite graph and $T(G)$ be its universal cover. For any
sequence of graphs $(G_i)$ with maximal degree $D_G$ and with local
weak limit $T(G)$, the spectral measure $\mu_{G_i}$ of the graph $G_i$
is weakly convergent to the measure $\mu_{T(G)}$ defined in Theorem
\ref{th:match}.
Moreover for all $x\in \RR$, we have $\lim_{i\to \infty}\mu_{G_i}(\{x\})=\mu_{T(G)}(\{x\})$.
\end{theorem}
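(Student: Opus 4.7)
\noindent The plan is to assemble Theorem~\ref{th:spec} from three ingredients that are already available in the literature: (i) continuity of empirical spectral measures under local weak convergence in bounded degree, (ii) Godsil's coincidence of the spectral and matching measures on (possibly infinite) trees, and (iii) the explicit identification of the matching measure of $T(G)$ provided by Theorem~\ref{th:match}.

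\noindent First I would establish weak convergence $\mu_{G_i}\Rightarrow \mu_{T(G)}$ by the method of moments. For any integer $k\geq 0$,
\begin{eqnarray*}
\int \lambda^k\, d\mu_{G_i}(\lambda) \;=\; \frac{1}{v(G_i)}\tr(A_{G_i}^k) \;=\; \frac{1}{v(G_i)}\sum_{v\in V(G_i)} N_k(G_i,v),
\end{eqnarray*}
where $N_k(G',v)$ is the number of closed walks of length $k$ from $v$, which depends only on the ball of radius $\lfloor k/2\rfloor$ around $v$. Rewriting this as an expectation over a uniformly random root, local weak convergence $G_i\lwc T(G)$ forces moment convergence to $\EE[N_k(T(G),o)]$. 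Since all adjacency spectra lie in $[-D_G,D_G]$, the family $\{\mu_{G_i}\}$ is tight and supported on a fixed compact set, so moment convergence upgrades to weak convergence to a limit measure $\mu_\infty$ whose moments are those closed walk averages on $T(G)$.

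\noindent Next I would identify $\mu_\infty$ with $\mu_{T(G)}$. By Godsil's theorem \cite{god81}, for any finite tree the spectral and matching measures coincide; the extension to infinite trees of bounded degree (\cite{bls11,bls12,res10}) gives that the spectral measure of $T(G)$, defined via the expected spectral measure at the random root, equals the matching measure of $T(G)$ in the sense used in Theorem~\ref{th:match}. The latter is precisely the measure $\mu_{T(G)}$ whose Stieltjes-type transform was computed in Theorem~\ref{th:match}. Since $\mu_\infty$ is the spectral measure of $T(G)$ by the moment computation above, this gives $\mu_\infty=\mu_{T(G)}$, completing the weak convergence statement.

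\noindent The main obstacle is the atomic statement $\mu_{G_i}(\{x\})\to \mu_{T(G)}(\{x\})$, which does not follow from weak convergence alone. For this I would invoke the continuity of spectral atoms under Benjamini--Schramm (local weak) convergence in bounded degree, established in \cite{bls11,bls12} (see also \cite{hdr}). The key identity is $\mu_{G_i}(\{x\})=v(G_i)^{-1}\dim \ker(A_{G_i}-xI)$; under bounded-degree local weak convergence this normalized nullity is continuous and equals the von Neumann dimension of the kernel of the corresponding shifted adjacency operator on $T(G)$, which in turn equals $\mu_{T(G)}(\{x\})$. Equivalently, one can reduce to the point $x=0$ by working with the shifted graph operator and then invoke the matching-measure atom identity already used at the end of Theorem~\ref{th:match} (namely $\mu_{T(G)}(\{0\})=1-2\nu^*(G)/v(G)$), so that atomic convergence at zero follows from convergence of the normalized rank of the adjacency matrices, and the general case reduces to the zero case by shifting. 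This rank-continuity step is where the real work lies; everything else is bookkeeping.
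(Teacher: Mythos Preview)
Your proposal is correct and aligns with the paper: the paper does not give its own proof of Theorem~\ref{th:spec} but simply states that it ``follows from \cite{res10,bls11,bls12} (see also Chapter~2 in \cite{hdr})'', which are precisely the references you invoke for both the weak convergence (via moments/closed walks and the Godsil-type identification on trees) and the atomic statement (via continuity of normalized nullity under local weak convergence). One small remark: your final ``reduce to $x=0$ by shifting'' is a bit loose since $A_{G_i}-xI$ is not a graph adjacency matrix, but you already stated the correct mechanism (continuity of $v(G_i)^{-1}\dim\ker(A_{G_i}-xI)$ under Benjamini--Schramm convergence, as in \cite{bls11}) just before, so this is not a gap.
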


In particular, we can apply this theorem to  characterize the limiting spectral measure of random
lifts $L_n(G)$ as a function of the original graph $G$. For the atom
at zero, we have:
\BEA
\lim_{n\to \infty} \mu_{L_n(G)}(\{0\}) = 1-2\frac{\nu^*(G)}{v(G)},
\EEA
and Theorem \ref{th:match} allows us to get the moments of the limiting
measure of $\mu_{L_n(G)}$.

\section{Proofs}\label{sec:proof}

\subsection{Statistical physics}\label{sec:statph}

To ease the notation, we will consider a setting with a weighted graph
$G=(V,E)$ with positive weights on edges $\{\theta_e\}_{e\in
  E}$. Taking a bipartite graph $G$ and $\theta_e=1$ for all $e\in E$, we recover the framework of Section \ref{sec:LBG}. To recover the more general framework of Section \ref{sec:LBP}, consider the bipartite graph described by the support of $A$ seen as an incidence matrix and for each $e=(ij)\in E$, define $\theta_e=a_{i,j}$.

We introduce the family of probability distributions on the set of
matchings in $G$ parametrised by a parameter $z>0$:
\BEA
\label{eq:gibbs}\mu^z_G(\bB) = \frac{z^{\sum_e B_e} \prod_{e\in \bB}\theta_e }{P_G(z)},
\EEA
where $P_G(z) =\sum_{\bB}z^{\sum_e B_e}\prod_{e\in \bB}\theta_e\prod_{v\in
  V}\ind\left(\sum_{e\in \d v}B_e\leq 1\right)=\sum_{k=0}^{\nu(G)}
w_k(G) z^k$, with 
\BEAS
w_k(G) = \sum_{\{\bB:\:\sum_eB_e=k\}}\prod_{e\in \bB}\theta_e,
\EEAS
where the sum is over matchings of size $k$.
Note that we have $w_k(G)=\per_k(A)$.
Note also that when $z$ tends to infinity, the measure $\mu_G^z$ converges to the measure:
\BEAS
\mu^\infty_G(\bB) = \frac{\prod_{e\in E}\theta_e}{\per_{\nu(G)}(\theta)},
\EEAS 
which is simply the uniform measure on maximum matchings when $\theta_e=1$ for all edges.
In statistical physics, this model is known as the monomer-dimer model
and its analysis goes back to the work of Heilmann and Lieb
\cite{heilmannlieb}.

We define the following functions:
\begin{eqnarray*}
U^s_G(z) &=& -\sum_{e\in E} \mu_G^z(B_e=1),\\
U^\theta_G(z)&=& \sum_{e\in E}\mu_G^z(B_e=1)\ln\theta_e,\\
S_G(z) &=& -\sum_{\bB} \mu^z_G(\bB)\ln \mu^z_G(\bB).
\end{eqnarray*}
Note that when $\theta_e=1$, we have $U^\theta_G(z)=0$ and $U_G^s$ is called the internal energy while $S_G$ is the canonical entropy.
We now define the partition function $\Phi_G(z)$ by
\begin{eqnarray*}
\Phi_G(z) = -U^s_G(z)\ln z +U^\theta_G(z)  + S_G(z).
\end{eqnarray*}
A more conventional notation in the statistical physics literature corresponds to an inverse temperature $\beta = \ln z$. 
Note that with our definitions, the internal energy $U^s_G(z)$ is negative, equals to minus the
average size of a matching sampled from $\mu_G^z$. This convention is
consistent with standard models in statistical physics where the low
temperature regime minimizes the internal energy, i.e. in our context
maximizes the size of the matching.
A simple computation shows that:
\begin{eqnarray*}
\Phi_G(z) = \ln P_{G}(z) \mbox{ and, } \Phi_G'(z) = \frac{-U_G^s(z)}{z}.
\end{eqnarray*}

\begin{lemma}
The function $U^s_G(z)$ is strictly decreasing and mapping $[0,\infty)$ to
$(-\nu(G), 0]$.
\end{lemma}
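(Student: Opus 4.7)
\medskip

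\noindent\textbf{Proof proposal.} The plan is to recognize $U^s_G(z)$ as minus the expected matching size under the Gibbs measure $\mu_G^z$, and then use the standard fact that the derivative of a mean with respect to an exponential-family parameter is a (scaled) variance. Writing $N(\bB)=\sum_e B_e$ for the size of a matching and recalling $P_G(z)=\sum_{k=0}^{\nu(G)} w_k(G)\,z^k$, a direct computation gives
\begin{equation*}
-U^s_G(z)\;=\;\sum_{e\in E}\mu_G^z(B_e=1)\;=\;\EE_{\mu_G^z}[N]\;=\;\frac{\sum_k k\,w_k(G)\,z^k}{\sum_k w_k(G)\,z^k}\;=\;\frac{z\,P_G'(z)}{P_G(z)}.
\end{equation*}

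Next I would differentiate this expression with respect to $z$. Setting $f(z)=P_G(z)$ and $g(z)=zP_G'(z)$, so that $f'=g/z$, the quotient rule yields
\begin{equation*}
\frac{d}{dz}\EE_{\mu_G^z}[N]\;=\;\frac{g'(z)}{f(z)}-\frac{g(z)\,f'(z)}{f(z)^2}\;=\;\frac{\EE_{\mu_G^z}[N^2]-\EE_{\mu_G^z}[N]^2}{z}\;=\;\frac{\mathrm{Var}_{\mu_G^z}(N)}{z}.
\end{equation*}
Hence $\tfrac{d}{dz}U^s_G(z)=-\mathrm{Var}_{\mu_G^z}(N)/z\le 0$, and strict monotonicity will follow once I rule out the degenerate case $\mathrm{Var}_{\mu_G^z}(N)=0$. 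But under $\mu_G^z$ the empty matching has positive weight $1$ and, assuming $\nu(G)\ge 1$, any single edge $e$ gives a matching of size $1$ with positive weight $z\theta_e>0$. So $N$ takes at least two distinct values with positive probability whenever $z>0$, giving $\mathrm{Var}_{\mu_G^z}(N)>0$. (If $\nu(G)=0$ the statement is vacuous, as the target interval $(-\nu(G),0]=(0,0]$ is empty; I would note this boundary case in passing.)

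Finally I would compute the two limits. At $z=0$, the formula for $\mu_G^z$ specializes to the point mass on the empty matching, so $\EE_{\mu_G^0}[N]=0$ and $U^s_G(0)=0$. As $z\to\infty$, the Gibbs weights force $\mu_G^z$ to concentrate on matchings of maximum size $\nu(G)$: for any matching $\bB$ with $N(\bB)<\nu(G)$,
\begin{equation*}
\mu_G^z(\bB)\;=\;\frac{z^{N(\bB)}\prod_{e\in\bB}\theta_e}{\sum_{k}w_k(G)z^k}\;\le\;\frac{\prod_{e\in\bB}\theta_e}{w_{\nu(G)}(G)}\,z^{N(\bB)-\nu(G)}\;\longrightarrow\;0,
\end{equation*}
so $\EE_{\mu_G^z}[N]\to\nu(G)$, i.e. $U^s_G(z)\to -\nu(G)$. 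Combined with the strict decrease and continuity established above, this shows $U^s_G$ maps $[0,\infty)$ bijectively onto $(-\nu(G),0]$, with the value $-\nu(G)$ being an unattained limit.

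The argument is essentially mechanical; the only conceptual point is the variance identity, and the only mild subtlety is checking that $N$ is genuinely random under $\mu_G^z$ for every $z>0$, which I would dispose of by the single-edge observation above.
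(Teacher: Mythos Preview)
Your proof is correct and takes essentially the same approach as the paper: both compute $-z\,(U^s_G)'(z)$ and recognize it as the variance of the matching size under $\mu_G^z$, which is strictly positive. The paper writes this out as the raw second-moment-minus-square-of-mean identity without naming it a variance, and it omits the verification of the range $(-\nu(G),0]$ that you supply; your version is slightly more thorough but conceptually identical.
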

\begin{proof}
We have $-U^s_G(z) = \sum_k kw_k(G)z^{k}/P_G(z)$ so that taking the derivative and multiplying by $z$, we get:
\BEAS
-z(U^s_G)'(z) &=& \frac{\sum_k k^2w_k(G)z^k}{P_G(z)}-\left(\frac{\sum_k kw_k(G)z^k}{P_G(z)}\right)^2\\
&=& \sum_k \left(k-\frac{\sum_\ell \ell w_\ell(G)z^\ell}{P_G(z)}\right)^2\frac{w_k(G)z^k}{P_G(z)}>0.
\EEAS 
\end{proof} 

We define $\tau=\tau(G)=2\nu(G)/v(G)$ which is the maximum
fraction of nodes covered by a matching in $G$. Note that $\tau(G)\leq 1$
and $\tau(G)=1$ if and only if the graph $G$ has a perfect matching.
For $t\in [0,\tau)$, we define $z_t(G)\in
[0,\infty)$ as the unique root to $U^s_G(z_t(G))=-tv(G)/2$. Note that
$t\mapsto z_t(G)$ is an increasing function which maps $[0,\tau)$ to $[0,\infty)$.
The function $\cS_G(t)$ is then defined for $t\in [0,\tau)$ by:
\BEA
\label{def:ent}\cS_G(t) = \frac{S_G(z_t(G))+U^\theta_G(z_t(G))}{v(G)},
\EEA
and $\cS_G(t) = -\infty$ for $t>\tau$.

\begin{proposition}
For $t<\tau$, we have $\cS'_G(t) = -\frac{1}{2}\ln z_t(G)$.
The limit $\lim_{t \to \tau}\cS_G(t)$ exists and we define $\cS_G(\tau)=\lim_{t \to \tau}\cS_G(t)=\frac{1}{v(G)}\ln w_{\nu(G)}(G)$.
\end{proposition}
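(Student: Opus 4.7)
The plan is to rewrite $\cS_G(t)$ directly in terms of the partition function $\ln P_G(z)$ and the internal energy $U^s_G$, differentiate via the chain rule, and then collapse the resulting expression using the identity $\Phi_G'(z)=-U^s_G(z)/z$ together with the defining relation $U^s_G(z_t)=-tv(G)/2$. The limit at $t=\tau$ will then follow from a careful analysis of the leading-order behavior of $P_G(z)$ as $z\to\infty$, where the only subtlety is matching an algebraic decay against a logarithmic blow-up.

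Concretely, from the definition of $\Phi_G$ one has $S_G(z)+U^\theta_G(z) = \Phi_G(z)+U^s_G(z)\ln z = \ln P_G(z) + U^s_G(z)\ln z$, so
\[
v(G)\,\cS_G(t) \;=\; \ln P_G(z_t)+U^s_G(z_t)\ln z_t.
\]
Since the preceding lemma establishes that $U^s_G$ is a strictly decreasing $C^1$-bijection from $[0,\infty)$ onto $(-\nu(G),0]$, the map $t\mapsto z_t$ is $C^1$ on $[0,\tau)$. Differentiating the display in $t$, the contribution $\frac{P_G'(z_t)}{P_G(z_t)}\,z_t' = -\frac{U^s_G(z_t)}{z_t}\,z_t'$ coming from $\ln P_G$ cancels exactly against the term $U^s_G(z_t)\,z_t'/z_t$ produced by $\ln z_t$, leaving $v(G)\,\cS_G'(t) = (U^s_G)'(z_t)\,z_t'\cdot \ln z_t$. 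Differentiating the defining equation $U^s_G(z_t)=-tv(G)/2$ gives $(U^s_G)'(z_t)\,z_t' = -v(G)/2$, which yields $\cS_G'(t) = -\tfrac12\ln z_t(G)$ as claimed.

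For the limit at $t=\tau$, I note that $z_t\to\infty$ when $t\to\tau$. The polynomial structure of $P_G$ gives the expansions
\[
P_G(z) = w_{\nu(G)}(G)\, z^{\nu(G)}\bigl(1+O(1/z)\bigr), \qquad -U^s_G(z) = \nu(G)+O(1/z),
\]
the second following from $-U^s_G(z)=zP_G'(z)/P_G(z)$. Using $tv(G)/2 = -U^s_G(z_t)$ I can rewrite
\[
v(G)\,\cS_G(t) \;=\; \ln P_G(z_t) - \tfrac{tv(G)}{2}\ln z_t \;=\; \ln w_{\nu(G)}(G) + \bigl(\nu(G)+U^s_G(z_t)\bigr)\ln z_t + O(1/z_t).
\]
The bracketed factor is $O(1/z_t)$, so even when multiplied by $\ln z_t$ it tends to $0$, giving $\cS_G(\tau)=\frac{1}{v(G)}\ln w_{\nu(G)}(G)$.

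The main (though modest) obstacle is the final step: knowing only $U^s_G(z_t)\to -\nu(G)$ would not suffice, since the companion factor $\ln z_t$ is unbounded. The key point is to extract the genuine $O(1/z)$ algebraic rate of convergence from the polynomial form of $P_G$; every other step is a direct consequence of the chain rule and the definitions.
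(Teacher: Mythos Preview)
Your argument is correct and follows essentially the same route as the paper: rewrite $\cS_G(t)$ as $\frac{1}{v(G)}\ln P_G(z_t)-\frac{t}{2}\ln z_t$, differentiate via the chain rule, and use $P_G'(z)/P_G(z)=-U^s_G(z)/z$ together with $U^s_G(z_t)=-tv(G)/2$ to obtain $\cS_G'(t)=-\tfrac12\ln z_t$. For the limit at $\tau$ the paper is content with the one-line observation that eventually $z_t\geq 1$ (so $\cS_G$ is monotone near $\tau$), whereas you supply the explicit $O(1/z)$ asymptotics for $\ln P_G$ and $U^s_G$ to pin down the value $\frac{1}{v(G)}\ln w_{\nu(G)}(G)$; this is the same idea made more explicit.
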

\begin{proof}
We have for $t<\tau$, $\cS_G(t) = \frac{1}{v(G)}\ln P_G(z_t)-t/2\ln z_t$, so that
taking the derivative with respect to $t$, we get:
\BEAS
\cS'_G(t) &=& z'_t\left(\frac{-t}{2z_t}+\frac{P'_G(z_t)}{v(G)P_G(z_t)}\right)
-\frac{\ln z_t}{2}.
\EEAS
Since $U^s_G(z) = -z\frac{P'_G(z)}{P_G(z)}$ and $U^s_G(z_t)=-tv(G)/2$, we get $\cS'_G(t) = -\frac{1}{2}\ln z_t$.
For $t$ large enough, we have $z_t\geq 1$ and the proposition follows.
\end{proof}

The following proposition is proved in \cite{csikvari2014lower} for
unweighted graphs (see Proposition 2.1(g)) but the proof is the same
in the weighted case. We include it her for convenience.
\begin{proposition}\label{prop:ord}
If for some graphs $G_1$ and $G_2$, we have for every $z\geq 0$,
\BEAS
\frac{\Phi_{G_1}(z)}{v(G_1)} \geq \frac{\Phi_{G_2}(z)}{v(G_2)},
\EEAS
then
\BEAS
\cS_{G_1}(t) \geq \cS_{G_2}(t)
\EEAS
for all $0\leq t\leq 1$.
\end{proposition}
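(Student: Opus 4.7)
My plan is to prove the proposition by establishing a Legendre--type variational representation
\BEAS
\cS_G(t)\;=\;\inf_{z>0}\left\{\frac{\Phi_G(z)}{v(G)}-\frac{t}{2}\ln z\right\},\qquad t\in[0,\tau(G)],
\EEAS
after which the conclusion is obtained by simply taking the infimum over $z>0$ of the pointwise hypothesis. The main work is in justifying the variational identity; the monotonicity step is then essentially bookkeeping.

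For the variational identity, I would begin with the elementary rewriting
\BEAS
\cS_G(t)\;=\;\frac{\Phi_G(z_t(G))}{v(G)}-\frac{t}{2}\ln z_t(G),
\EEAS
which is obtained by substituting $\Phi_G(z)=-U^s_G(z)\ln z+U^\theta_G(z)+S_G(z)$ into (\ref{def:ent}) and using $U^s_G(z_t(G))=-tv(G)/2$. Setting $\beta=\ln z$, I would then verify that $\beta\mapsto\ln P_G(e^\beta)=\ln\sum_k w_k(G)e^{k\beta}$ is convex in $\beta$ -- this is the usual log-moment generating function computation, a one-line Cauchy--Schwarz exercise of exactly the same flavor as the variance identity already used in the lemma above to show strict monotonicity of $U^s_G$. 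Convexity of $\beta\mapsto \frac{\Phi_G(e^\beta)}{v(G)}-\frac{t\beta}{2}$ follows, and its stationary point satisfies $-U^s_G(e^\beta)/v(G)=t/2$, with unique solution $e^\beta=z_t(G)$. Thus the infimum in the representation is attained at $z=z_t(G)$ and matches the previous identity for $t\in[0,\tau(G))$; the boundary case $t=\tau(G)$ follows by letting $z\to\infty$, using the asymptotics $\Phi_G(z)\sim \nu(G)\ln z+\ln w_{\nu(G)}(G)$.

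The proposition then follows in two lines. For $t\in[0,\tau(G_1)]$, subtracting $\frac{t}{2}\ln z$ from both sides of the hypothesis and taking $\inf_{z>0}$ yields $\cS_{G_1}(t)\geq \cS_{G_2}(t)$. For $t\in(\tau(G_1),1]$ we have $\cS_{G_1}(t)=-\infty$ by convention, but dividing the hypothesis by $\ln z$ and letting $z\to\infty$ (using the asymptotics above) forces $\tau(G_1)\geq\tau(G_2)$, so $\cS_{G_2}(t)=-\infty$ as well and the inequality holds trivially. The only step that genuinely requires an argument is the convexity of $\beta\mapsto\ln P_G(e^\beta)$, which I anticipate as the main (and rather minor) obstacle; all the rest is essentially unwinding the definitions.
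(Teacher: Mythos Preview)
Your proof is correct and takes a genuinely different route from the paper's. The paper argues directly on the difference $\cS_{G_1}(t)-\cS_{G_2}(t)$: using $\cS_G'(t)=-\tfrac{1}{2}\ln z_t(G)$, any interior critical point $t_0$ forces $z_{t_0}(G_1)=z_{t_0}(G_2)=z_0$, and then the identity $\cS_G(t_0)=\tfrac{1}{v(G)}\Phi_G(z_0)-\tfrac{t_0}{2}\ln z_0$ together with the hypothesis at $z=z_0$ shows the difference is nonnegative there; since it vanishes at $t=0$ and the endpoint cases are handled via $\tau(G_1)\geq\tau(G_2)$, the inequality follows. Your approach instead packages the same identity into the Legendre-type formula $\cS_G(t)=\inf_{z>0}\bigl\{\tfrac{1}{v(G)}\Phi_G(z)-\tfrac{t}{2}\ln z\bigr\}$, valid for all $t\in[0,1]$ once one notes that the infimum is $-\infty$ for $t>\tau(G)$, and then the proposition is immediate from the fact that pointwise inequalities are preserved under infima. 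The paper's argument is slightly more bare-handed (it never needs to name convexity), while your variational characterization is cleaner, handles all $t$ uniformly, and is a useful statement in its own right. One small presentational point: since your variational identity already yields $-\infty$ for $t>\tau(G)$, you do not actually need the separate case split; the two-line version works for every $t\in[0,1]$ at once.
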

\begin{proof}
The assumption ensures that $\frac{\nu(G_1)}{v(G_1)} \geq
\frac{\nu(G_2)}{v(G_2)}$. Moreover if $\frac{\nu(G_1)}{v(G_1)} =
\frac{\nu(G_2)}{v(G_2)}$, then
\BEAS
\frac{\ln w_{\nu(G_1)}(G_1)}{v(G_1)} \geq \frac{\ln w_{\nu(G_2)}(G_2)}{v(G_2)}.
\EEAS
Hence the statement is trivial for
$t\geq 2\nu(G_2)/v(G_2)$.
We consider now $t\in [0, 2\nu(G_2)/v(G_2))$. Note that $\cS_{G_1}(0)=\cS_{G_2}(0)=0$.
The derivative of
$\cS_{G_1}(t)-\cS_{G_2}(t)$ for $t<2\nu(G_2)/v(G_2)$ is 
\BEAS
-\frac{1}{2}\left( \ln z_{t}(G_1)-\ln z_t(G_2)\right)
\EEAS
Assume this derivative is $0$ at $t_0$, then we have
$z_{t_0}(G_1)=z_{t_0}(G_1)=z_0$ and then
\BEAS
\frac{S_{G_1}(z_0)}{v(G_1)} = \frac{\ln P_{G_1}(z_0)}{v(G_1)}-\frac{t_0}{2}\ln
z_0\geq \frac{\ln P_{G_2}(z_0)}{v(G_2)}-\frac{t_0}{2}\ln z_0 = \frac{S_{G_2}(z_0)}{v(G_2)}
\EEAS
Hence the minimums of $\cS_{G_1}(t)-\cS_{G_2}(t)$ on
$[0,2\nu(G_2)/v(G_2))$ are non-negative.
\end{proof}

\subsection{Local recursions on finite graphs and infinite trees}\label{sec:locrec}

Let $G=(V,E)$ be a (possibly infinite) graph with bounded degree and weights on edges $\{\theta_e\}_{e\in E}$.
We introduce the set $\orE$ of directed edges of $G$ comprising two
directed edges $u\to v$ and $v\to u$ for each undirected edge $(uv) \in
E$. For $\ore\in \orE$, we denote by $-\ore$ the edge with opposite
direction. With a slight abuse of notation, we denote by $\d v$ the set of
incident edges to $v\in V$ directed towards $v$. We also denote by
$\d v\bac u$ the set of neighbors of $v$ from which we removed $u$. We also use this notation to denote the set of incident edges to $v$ directed towards $v$ from
which we removed $u\to v$.

Given $G$, we define the map $\Rcal_G:(0,\infty)^{\orE} \to
(0,\infty)^{\orE} $ by $\Rcal_G(\bold{a}) =\bold{b}$ with
\BEAS
b_{u\to v} = \frac{1}{1+\sum_{w\in \d u\bac v}\theta_{wu}a_{w\to u}},
\EEAS
with the convention that the sum over the empty set equals zero.
We also denote by $\Rcal_{u\to v}:(0,\infty)^{\d u\bac v}\to (0,\infty)$ the
local mapping defined by: $b_{u\to v}=\Rcal_{u\to v}(\bold{a})$ (note
that only the coordinates of $\bold{a}$ in $\d u\bac v$ are taken as input
of $\Rcal_{u\to v}$).
Comparisons between vectors are always componentwise.

\begin{proposition}\label{prop:xefin}
Let $G$ be a finite graph. For any $z>0$, the fixed point equation $\by(z) =
z\Rcal_G(\by(z))$ has a unique attractive solution $\by(z)\in (0,+\infty)^{\orE}$.
The function $z\mapsto \by(z)$ is increasing and the
  function $z\mapsto \frac{\by(z)}{z}$ is decreasing for $z>0$.
\end{proposition}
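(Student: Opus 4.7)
The plan exploits the fact that the map $\Phi_z := z\Rcal_G$ is strictly \emph{anti-monotone} in the componentwise order on $(0,\infty)^{\orE}$: increasing any coordinate of the input strictly decreases each coordinate of the output. Hence $\Phi_z^2$ is componentwise non-decreasing. I would first iterate $\Phi_z$ from $\mathbf{a}^{(0)} = \mathbf{0}$, observe that $\mathbf{a}^{(1)} = z\mathbf{1}$, and check by a one-line induction that the even-indexed subsequence $\mathbf{a}^{(2t)}$ is non-decreasing, the odd-indexed subsequence $\mathbf{a}^{(2t+1)}$ is non-increasing, and $\mathbf{a}^{(2t)} \leq \mathbf{a}^{(2t+1)}$ componentwise. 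All iterates lie in $[0, z\mathbf{1}]$, so both subsequences converge to limits $\by^- \leq \by^+$ in $(0,\infty)^{\orE}$ satisfying $\Phi_z(\by^-) = \by^+$ and $\Phi_z(\by^+) = \by^-$.

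The main obstacle is showing $\by^- = \by^+$. I would introduce the ratio $r_{u\to v} = y^+_{u\to v}/y^-_{u\to v} \geq 1$, pick an edge $u_0 \to v_0$ attaining $R := \max_{\ore \in \orE} r_{\ore}$, and combine the two fixed-point identities $y^+_{u_0\to v_0}(1 + A^-) = z = y^-_{u_0\to v_0}(1 + A^+)$, where $A^\pm := \sum_{w \in \d u_0 \bac v_0} \theta_{wu_0}\, y^\pm_{w\to u_0}$, to get
\[
R \;=\; \frac{1 + \sum_{w \in \d u_0 \bac v_0}\theta_{wu_0}\, y^-_{w\to u_0}\, r_{w\to u_0}}{1 + A^-} \;\leq\; \frac{1 + R A^-}{1 + A^-},
\]
which forces $R(1+A^-) \leq 1 + RA^-$, hence $R \leq 1$; combined with $R \geq 1$ this yields $R = 1$ and $\by^- = \by^+ =: \by(z)$. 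Attractiveness and uniqueness then follow in one stroke: for any $\mathbf{a}^{(0)} \in (0,\infty)^{\orE}$, one has $\mathbf{a}^{(1)} \in (0, z\mathbf{1}]$, after which a straightforward sandwich via the anti-monotonicity of $\Phi_z$ traps every iterate between the two monotone subsequences built above, forcing $\mathbf{a}^{(t)} \to \by(z)$; applied to any other fixed point (whose orbit is constant) this yields uniqueness.

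For the dependence on $z$, I would use the fixed-point identity at $z$, namely $1 + \sum_{w} \theta_{wu}\, y_{w\to u}(z) = z/y_{u\to v}(z)$, to compute $\Phi_{z'}(\by(z))_{u\to v} = (z'/z)\, y_{u\to v}(z) > y_{u\to v}(z)$ whenever $z' > z$; a short calculation then gives $\Phi_{z'}^2(\by(z)) > \by(z)$ coordinatewise, so iterating the monotone map $\Phi_{z'}^2$ from $\by(z)$ yields a componentwise non-decreasing sequence converging, by the attractiveness just established at $z'$, to $\by(z')$. This gives $\by(z') > \by(z)$ strictly. Finally, rewriting the fixed-point equation as $y_{u\to v}(z)/z = \bigl(1 + \sum_{w \in \d u \bac v} \theta_{wu}\, y_{w\to u}(z)\bigr)^{-1}$ and invoking the coordinatewise monotonicity of $\by(z)$ just proved shows at once that $\by(z)/z$ is strictly decreasing in $z$.
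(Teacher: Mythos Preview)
Your argument is correct and, unlike the paper, actually gives a self-contained proof: the paper simply cites \cite{lelarge2014loopy} and \cite{salez12} for the case $\theta_e=1$ and asserts that the proof extends. So your proposal fills in what the paper omits.

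One small inaccuracy: the map $\Phi_z$ is not \emph{strictly} anti-monotone in the sense you state, since $(\Phi_z(\mathbf a))_{u\to v}$ depends only on the coordinates $a_{w\to u}$ with $w\in\partial u\setminus v$, not on all of $\mathbf a$. What you actually use (and what is true) is that $\Phi_z$ is componentwise \emph{non-increasing}, hence $\Phi_z^2$ is componentwise non-decreasing; this suffices for the sandwich argument and for the monotonicity-in-$z$ step. Your ratio argument for $\by^-=\by^+$ is clean and correct: with $R=\max_{\ore}r_{\ore}$ attained at $u_0\to v_0$, the identity $R=(1+A^+)/(1+A^-)$ together with $A^+\le RA^-$ forces $R\le 1$.

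As a point of comparison, the paper does give a full uniqueness proof in the related setting of Theorem~\ref{th:infint} (infinite trees), and there it follows \cite{stoc07} by establishing an explicit contraction bound $\|\nabla f\|_{L_1}\le 1-\tfrac{2}{1+\sqrt{1+z\Theta}}$ on the two-step recursion in logarithmic variables. That approach yields quantitative (exponential) convergence rates, which your monotone-sandwich-plus-ratio argument does not, but yours is more elementary, avoids the change of variables, and makes the monotonicity of $z\mapsto\by(z)$ and of $z\mapsto\by(z)/z$ fall out directly from the fixed-point identity.
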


Note that the mapping $z\Rcal_G$ defined in this proposition is simply the
mapping multiplying by $z$ each component of the output of the mapping
$\Rcal_G$ (making the notation consistent).
\begin{proof}
This result is proved for the case $\theta_e=1$ for all edges in
\cite{lelarge2014loopy} (see also \cite{salez12}) and the proof extends to this setting.
\end{proof}
We define for all $v\in V$, the
following function of the vector $(y_{\ore},\:\ore\in \d v)$,
\BEA
\label{def:D1}\Dcal_v(\by) &=& \sum_{\ore\in \d v} \frac{\theta_e y_{\ore}\Rcal_{-\ore}(\by)}{1+\theta_ey_{\ore}\Rcal_{-\ore}(\by)}\\
\label{def:D2}&=& \frac{\sum_{\ore \in \d v} \theta_e y_{\ore}}{1+\sum_{\ore \in \d v} \theta_ey_{\ore}}.
\EEA
Clearly from (\ref{def:D2}), we see that $\Dcal_v$ is an increasing
function of $\by$ and the proposition below follows directly from
the monotonicity of $\by(z)$ proved in Proposition \ref{prop:xefin}:
\begin{proposition}\label{prop:xe}
Let $G=(V,E)$ be a finite graph and $\by(z)$ be the solution to $\by(z) =
z\Rcal_G(\by(z))$. For any $v\in V$, the mapping $z\mapsto
\Dcal_v(\by(z))$ is increasing and $\Dcal_v(\by(z)) = \sum_{e
  \in \d v}x_e(z)$, where
\BEA
\label{def:xe}x_e(z) = \frac{\theta_e y_{\ore}(z)y_{-\ore}(z)}{z+\theta_e y_{\ore}(z)y_{-\ore}(z)}\in (0,1).
\EEA
We denote by $\bx(z) = (x_e(z), \:e\in E)$ the vector defined by
(\ref{def:xe}), then $\bx(z)\in FM(G)$ and we have:
\BEA
\label{eq:limD}\lim_{z\to \infty}\sum_{v\in V} \Dcal_v(\by(z)) = 2\nu^*(G).
\EEA
\end{proposition}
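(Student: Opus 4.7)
The first three claims are book-keeping. For the identity $\Dcal_v(\by(z))=\sum_{e\in\d v}x_e(z)$, I would apply the fixed-point equation $\by(z)=z\Rcal_G(\by(z))$ coordinatewise, so that $\Rcal_{-\ore}(\by(z))=y_{-\ore}(z)/z$. Substituting into definition (\ref{def:D1}), each summand indexed by $\ore\in\d v$ becomes
\[
\frac{\theta_e y_{\ore}(z)\cdot y_{-\ore}(z)/z}{1+\theta_e y_{\ore}(z)\cdot y_{-\ore}(z)/z}=\frac{\theta_e y_{\ore}(z)y_{-\ore}(z)}{z+\theta_e y_{\ore}(z)y_{-\ore}(z)}=x_e(z).
\]
Monotonicity of $z\mapsto\Dcal_v(\by(z))$ follows because (\ref{def:D2}) exhibits $\Dcal_v$ as an increasing function of its arguments and $\by(z)$ is increasing in $z$ by Proposition \ref{prop:xefin}. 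For the membership $\bx(z)\in FM(G)$: each $x_e(z)\in(0,1)$ is immediate from its formula, and the identity just proved combined with (\ref{def:D2}) gives $\sum_{e\in\d v}x_e(z)=\Dcal_v(\by(z))<1$.

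The substantive claim is (\ref{eq:limD}). Since each $\Dcal_v(\by(z))$ is increasing in $z$ and bounded by $1$, the sum has a limit $L$, and because $\sum_{v}\Dcal_v(\by(z))=2\sum_{e\in E}x_e(z)$ with $\bx(z)\in FM(G)$, necessarily $L\leq 2\nu^*(G)$. For the reverse inequality I would pass to the rescaled messages $\beta_{\ore}(z):=y_{\ore}(z)/z\in(0,1]$, which are bounded and have limits $\beta^*_{\ore}\in[0,1]$ by the monotonicity of $\by(z)$. Plugging these into the fixed-point equation gives $1/\beta_{u\to v}(z)=1+z\sum_{w\in\d u\bac v}\theta_{wu}\beta_{w\to u}(z)$, from which one extracts a dichotomy: whenever $\beta^*_{u\to v}>0$ one has $\beta^*_{w\to u}=0$ for every $w\in\d u\bac v$; in the complementary regime one tracks the rate at which $z\beta_{\ore}(z)\beta_{-\ore}(z)$ converges. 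The three resulting regimes for the pair $(\beta^*_{\ore},\beta^*_{-\ore})$ produce respectively $x_e^*=1$, $x_e^*\in(0,1)$, or $x_e^*=0$, yielding a limiting vector $\bx^*\in FM(G)$ together with a dual half-integral vertex cover; LP duality then gives $\sum_e x_e^*=\nu^*(G)$.

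The main obstacle is this last step: carefully controlling the asymptotic scaling of the messages $y_{\ore}(z)$ so as to identify the limit $\bx^*$ and to read off a matching dual certificate. In the bipartite case the argument is cleaner because every maximum fractional matching is integral and K\"onig's theorem provides an integer vertex cover. For general graphs one must invoke the half-integrality theorem for optimal fractional vertex covers and handle the half-integral edges, which lie on odd cycles, where $\beta^*$ vanishes at both endpoints while $z\beta_{\ore}(z)\beta_{-\ore}(z)$ still has a positive finite limit corresponding to $x^*_e=1/2$.
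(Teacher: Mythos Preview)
Your treatment of the first three claims is correct and matches the paper, which dismisses them as direct consequences of Proposition~\ref{prop:xefin} and formula~(\ref{def:D2}). One small slip: the existence of the limits $\beta^*_{\ore}$ comes from the monotonicity of $\by(z)/z$ (decreasing, by Proposition~\ref{prop:xefin}), not of $\by(z)$ itself; you use the right fact but name the wrong one.

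For the limit~(\ref{eq:limD}) the paper does not give a self-contained argument: it simply invokes Theorem~1 of \cite{lelarge2014loopy} for the case $\theta_e=1$ and asserts that the proof carries over to positive weights. Your sketch goes further by outlining the LP-duality strategy: pass to the rescaled limits $\beta^*_{\ore}$, classify edges according to the regime of $(\beta^*_{\ore},\beta^*_{-\ore})$, and extract from this both a limiting fractional matching $\bx^*$ and a dual half-integral vertex-cover certificate. This is indeed the shape of the argument in the cited reference, and your dichotomy (if $\beta^*_{u\to v}>0$ then $\beta^*_{w\to u}=0$ for all $w\in\d u\bac v$) is the correct first step. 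The obstacle you flag---controlling the scaling of $z\beta_{\ore}(z)\beta_{-\ore}(z)$ in the intermediate regime and verifying that the resulting $\bx^*$ together with its dual certificate attains equality in the LP---is real and is exactly the content of the cited theorem. So your proposal is correct in structure; the paper simply chose to outsource that last step rather than reproduce it.
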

\begin{proof}
The only non-trivial statement in the above proposition is the value
of the limit in (\ref{eq:limD}). In the case $\theta_e=1$, it follows
from Theorem 1 in \cite{lelarge2014loopy} and the proof carries over
to the case $\theta_e>0$.
\end{proof}

For a finite graph $G=(V,E)$ with weights on edges $\{\theta_e\}_{e\in
  E}$, we define for $\bx\in FM(G)$ defined by (\ref{def:LG}) and $z>0$,
\BEAS
U^B_G (\bx) &=& -\sum_{e\in E}x_e,\\
S_G^B (\bx) &=& \sum_{e\in E} x_e\ln \frac{\theta_e}{x_e} +(1-x_e)\ln (1-x_e) - \sum_{v\in V}\left( 1-\sum_{e\in \d v} x_e\right)\ln
\left( 1-\sum_{e\in \d v}x_e\right),\\
\Phi^B_G(\bx,z) &=& - U^B_G(\bx)\ln z +S^B_G(\bx). 
\EEAS
We denote by $\bx(z)$ the vector defined by (\ref{def:xe}) in
Proposition \ref{prop:xe} where $\by(z) = z\Rcal_{G}(\by(z))$.
Note that
\BEA
\label{eq:UBD}U^B_G (\bx(z)) =\frac{-1}{2}\sum_{v\in V} \Dcal_v(\by(z)), 
\EEA
so that by Proposition \ref{prop:xe}, the mapping $z\mapsto U^B_G
(\bx(z))$ is decreasing from $[0,\infty)$ to $(-\nu^*(G),0]$.
Thus, we can define $z_t^B$ as the unique solution in $[0,\infty)$ to
\BEAS
U^B_G(\bx(z^B_t)) = -\frac{tv(G)}{2}, \mbox{ for } t<\tau^*(G)=\frac{2\nu^*(G)}{v(G)}.
\EEAS
Similarly as in (\ref{def:ent}), we define
\BEAS
\cS^B_G(t) = \frac{S^B_G(\bx(z^B_t))}{v(G)} \mbox{ for }
t<\tau^*(G).
\EEAS
Note that we have $\tau^*(G)\geq \tau(G)$ with equality if $G$ is bipartite.

\begin{proposition}\label{prop:max}
Recall that $\bx(z)\in FM(G)$ is defined by (\ref{def:xe}).
We have for any $z>0$,
\begin{eqnarray*}
\sup_{\bx\in FM(G)}\Phi^B_G(\bx;z)=\Phi^B_G(\bx(z);z),
\end{eqnarray*}
and for $t<\tau^*(G)$,
\begin{eqnarray*}
\Sigma^B_G(t) = \frac{1}{v(G)} \max_{\bx\in FM_{tv(G)/2}(G)} S^B_G(\bx),
\end{eqnarray*}
where $FM_t$ is defined in (\ref{def:fmt}) and where the maximum taken over an empty set is equal to $-\infty$.
\end{proposition}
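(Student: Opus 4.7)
The plan is to prove the two statements in order, since the second one follows from the first by standard Lagrangian duality applied to the linear constraint $\sum_e x_e = tv(G)/2$ on top of $FM(G)$.

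For the unconstrained statement, I would first observe that $\Phi^B_G(\cdot,z) = (\sum_e x_e)\ln z + S^B_G$ is concave on $FM(G)$, since $S^B_G$ is concave by Proposition \ref{prop:conc} and the extra term is linear. Moreover, the vector $\bx(z)$ defined in (\ref{def:xe}) lies in the (relative) interior of $FM(G)$: from $y_{\ore}(z)>0$ we get $x_e(z)\in(0,1)$, and from (\ref{def:D2}) we get $\sum_{e\in\d v}x_e(z)=\Dcal_v(\by(z))<1$. So it suffices to verify the first-order stationarity condition
\begin{eqnarray*}
\ln z + \ln\theta_e - \ln x_e - \ln(1-x_e) + \ln\bigl(1-\textstyle\sum_{e'\in\d u}x_{e'}\bigr) + \ln\bigl(1-\textstyle\sum_{e'\in\d v}x_{e'}\bigr) = 0
\end{eqnarray*}
at $\bx(z)$, for every edge $e=uv$. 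The key identity is that the fixed-point equation $\by(z)=z\Rcal_G(\by(z))$ can be rewritten as
\begin{eqnarray*}
1+\sum_{\ore'\in\d u}\theta_{e'}y_{\ore'}(z) = \frac{z+\theta_e y_{\ore}(z)y_{-\ore}(z)}{y_{\ore}(z)},
\end{eqnarray*}
with the analogous formula at $v$. Combined with $x_e = \theta_e y_{\ore}y_{-\ore}/(z+\theta_e y_{\ore}y_{-\ore})$, $1-x_e = z/(z+\theta_e y_{\ore}y_{-\ore})$, and $1-\sum_{e'\in\d u}x_{e'} = 1/(1+\sum_{\ore'\in\d u}\theta_{e'}y_{\ore'})$, a direct substitution shows that the argument of the exponential of the stationarity condition equals $1$, as required.

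For the constrained statement, I would use the saddle-point reformulation. Introduce a Lagrange multiplier $\lambda\in\R$ for the equality constraint $\sum_e x_e = tv(G)/2$, giving the partial Lagrangian
\begin{eqnarray*}
L(\bx,\lambda) = S^B_G(\bx) + \lambda\Bigl(\textstyle\sum_e x_e - tv(G)/2\Bigr) = \Phi^B_G(\bx,e^\lambda) - \lambda\,tv(G)/2.
\end{eqnarray*}
Since $S^B_G$ is concave and $FM_{tv(G)/2}(G)$ is a non-empty polytope whose relative interior is non-empty for $t<\tau^*(G)$ (by (\ref{eq:limD}) and monotonicity of $U^B_G$ along $\bx(z)$), Slater's condition holds and strong duality gives
\begin{eqnarray*}
\max_{\bx\in FM_{tv(G)/2}(G)} S^B_G(\bx) = \min_{\lambda\in\R}\max_{\bx\in FM(G)} L(\bx,\lambda) = \min_{\lambda\in\R}\bigl[\Phi^B_G(\bx(e^\lambda),e^\lambda)-\lambda\,tv(G)/2\bigr],
\end{eqnarray*}
using the first part of the proposition to identify the inner maximum. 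The minimum over $\lambda$ is achieved when $\sum_e x_e(e^\lambda) = tv(G)/2$, i.e.\ $U^B_G(\bx(e^\lambda))=-tv(G)/2$, which by the definition of $z^B_t$ occurs at $e^\lambda = z^B_t$. Plugging in and cancelling the term $(\sum_e x_e(z^B_t))\ln z^B_t$ against $(\ln z^B_t)\,tv(G)/2$ collapses the right-hand side to $S^B_G(\bx(z^B_t))$, which divided by $v(G)$ is exactly $\cS^B_G(t)$.

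The main obstacle is the first part, specifically the bookkeeping needed to verify stationarity: one has to combine the two-sided form of the recursion $\by=z\Rcal_G(\by)$ with the formulas for $x_e$ and $1-\sum_{e'\in\d v}x_{e'}$ and see that the fractions telescope. Once this is done, the rest is standard convex analysis: concavity plus an interior critical point for the first statement, and Slater-type strong duality plus the already-established monotonicity of $z\mapsto U^B_G(\bx(z))$ for the second.
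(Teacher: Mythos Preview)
Your proof is correct. For the first statement, your stationarity computation is exactly right and is in fact the content of Lemma~\ref{lem} later in the paper (the identity $x_e(1-x_e)/z=\theta_e(1-\sum_{e'\in\d u}x_{e'})(1-\sum_{e'\in\d v}x_{e'})$); the paper itself simply cites \cite{lelarge2014loopy} here, so you are supplying the details it omits.

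For the second statement your Lagrangian/Slater route works, but the paper does it in one line without any duality machinery: since $\bx(z^B_t)$ already lies in $FM_{tv(G)/2}(G)$ and maximizes $\Phi^B_G(\cdot,z^B_t)$ over all of $FM(G)$, for any $\bx\in FM_{tv(G)/2}(G)$ one has
\[
\tfrac{tv(G)}{2}\ln z^B_t + S^B_G(\bx)=\Phi^B_G(\bx,z^B_t)\le \Phi^B_G(\bx(z^B_t),z^B_t)=\tfrac{tv(G)}{2}\ln z^B_t + S^B_G(\bx(z^B_t)),
\]
so $S^B_G(\bx)\le S^B_G(\bx(z^B_t))$. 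This is really your argument with the multiplier $\lambda=\ln z^B_t$ fixed from the start, so no appeal to strong duality or Slater is needed; the constrained maximizer is exhibited directly.
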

\begin{proof}
The first statement is proved in \cite{lelarge2014loopy} for the
case where $\theta_e=1$ but extends easily to the current framework.
For the second statement, note that for any $\bx\in FM_{tv(G)/2}(G)$ with $t<\tau^*(G)$, we have
\BEAS
\Phi^B_G(\bx,z^B_t) = \frac{tv(G)}{2}\ln z^B_t +S^B_G(\bx)\leq
\Phi^B_G(\bx(z^B_t),z^B_t)= \frac{tv(G)}{2}\ln z^B_t +S^B_G(\bx(z^B_t)).
\EEAS
By definition, we have $\bx(z^B_t)\in M_{tv(G)/2}(G)$, so that $\max_{\bx\in M_{tv(G)/2}(G)} S^B_G(\bx)= S^B_G(\bx(z^B_t))$.
\end{proof}

We now extend Proposition \ref{prop:xefin} to infinite trees:
\begin{theorem}\label{th:infint}
Let $T = (V,E)$ be a (possibly infinite) tree with bounded degree. For each $z>0$, there exists a unique solution in $(0,\infty)^{\orE}$ to the fixed point
equation $\by(z) = z\Rcal_T(\by(z))$, i.e. such that
\BEA
\label{eq:rect}y_{u\to v}(z) = \frac{z}{1+\sum_{w\in \d u\bac v}\theta_{wu}y_{w\to u}(z)}.
\EEA
\end{theorem}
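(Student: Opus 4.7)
The plan is to build the solution by iterating $F = z\Rcal_T$ starting from $\by^{(0)} = 0$, tracking the interleaved bounds this produces, and then closing the gap between the even and odd iterates. The map $F$ is antitone on $[0,\infty)^{\orE}$ (if $\by \leq \by'$ componentwise then $F(\by) \geq F(\by')$), and $\by^{(1)} = F(0)$ has every coordinate equal to $z$ because each sum over neighbours is empty. Antitonicity then gives the standard interleaving
$$0 = \by^{(0)} \leq \by^{(2)} \leq \by^{(4)} \leq \cdots \leq \by^{(5)} \leq \by^{(3)} \leq \by^{(1)} = z.$$
Since $T$ has bounded degree, each coordinate of $F(\by)$ depends on only finitely many coordinates of $\by$, so $F$ is continuous for coordinatewise convergence. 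The even and odd subsequences therefore converge coordinatewise to vectors $\by_{\min} \leq \by_{\max}$ satisfying $F(\by_{\min}) = \by_{\max}$ and $F(\by_{\max}) = \by_{\min}$. Moreover, any fixed point $\by \in (0,\infty)^{\orE}$ of $F$ satisfies $\by \leq z$ componentwise, and iterating $F$ then forces $\by^{(2k)} \leq \by \leq \by^{(2k+1)}$ for every $k$, so $\by_{\min} \leq \by \leq \by_{\max}$. Both existence and uniqueness therefore reduce to the single equality $\by_{\min} = \by_{\max}$.

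The main obstacle is this equality, and I would prove it with a ratio argument. Let $D_T$ denote the maximal degree of $T$ and $\theta_{\max}$ the supremum of the edge weights (finite in the intended application to universal covers of finite weighted graphs). For any fixed point of $F$, the a priori bounds $z/(1+S_{\max}) \leq y_{\ore} \leq z$ hold with $S_{\max} = (D_T - 1)\theta_{\max}\, z$, so both $\by_{\min}$ and $\by_{\max}$ lie in this compact interval bounded away from $0$. Defining $\rho_{\ore} = y^{\max}_{\ore}/y^{\min}_{\ore} \geq 1$ and $R^* = \sup_{\ore} \rho_{\ore}$, we have $R^* < \infty$. Substituting $y^{\min}_{w\to u} \geq y^{\max}_{w\to u}/R^*$ into the fixed-point equation for $\by_{\max}$ yields, for every directed edge $u \to v$,
$$\rho_{u\to v} \;=\; \frac{1+\sum_{w \in \d u\bac v} \theta_{wu}\, y^{\max}_{w\to u}}{1+\sum_{w \in \d u\bac v} \theta_{wu}\, y^{\min}_{w\to u}} \;\leq\; \frac{R^*(1+S)}{R^*+S}, \qquad S = \sum_{w \in \d u\bac v} \theta_{wu}\, y^{\max}_{w\to u} \leq S_{\max}.$$
If $R^* > 1$, then $S \mapsto (1+S)/(R^*+S)$ is strictly increasing in $S$ and strictly less than $1$, so the right-hand side is bounded uniformly by $R^* c$ with $c = (1+S_{\max})/(R^*+S_{\max}) < 1$. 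Taking the supremum over $\ore$ on the left then forces $R^* \leq R^* c$, contradicting $R^* \geq 1$ and $c < 1$. Hence $R^* = 1$, the vectors $\by_{\min}$ and $\by_{\max}$ coincide, and this common value is the unique solution of the fixed point equation~(\ref{eq:rect}).
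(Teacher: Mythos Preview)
Your proof is correct and takes a genuinely different route from the paper. For existence, the paper invokes the Schauder fixed point theorem on the compact product $[0,z]^{\orE}$, whereas you construct the fixed point directly via the antitone iteration $\by^{(k+1)} = z\Rcal_T(\by^{(k)})$ from $\by^{(0)}=0$ and squeeze between the even and odd limits. For uniqueness, the paper follows \cite{stoc07}: it passes to logarithmic variables $h_{u\to v} = -\ln(y_{u\to v}/z)$, composes the recursion with itself, and bounds the $L_1$-norm of the gradient of the resulting two-step map by $1 - 2/(\sqrt{1+z\Theta}+1) < 1$, where $\Theta$ is the local weighted degree. Your ratio argument on $R^* = \sup_{\ore} y^{\max}_{\ore}/y^{\min}_{\ore}$ is more elementary and sidesteps the gradient computation entirely; the key step $R^* \leq R^*(1+S_{\max})/(R^*+S_{\max})$ is clean and the monotonicity in $S$ you use is correct. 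Both arguments implicitly require the edge weights to be uniformly bounded---you say so explicitly, while the paper needs it so that the contraction constant is uniform across the tree---and this holds in the intended application to universal covers of finite weighted graphs. The paper's route has the advantage of delivering an explicit exponential convergence rate for the message-passing iteration, which feeds into the algorithmic discussion in Section~\ref{sec:idea}; yours is shorter and entirely self-contained.
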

\begin{proof}
First note that any non-negative solution must satisfy $y_{u\to
  v}(z)\leq z$ for all $(uv)\in E$. The compactness of $[0,z]^{\orE}$
(as a countable product of compact spaces) guarantees the existence of
a solution by Schauder fixed point theorem.

To prove the uniqueness, we follow the approach in
\cite{stoc07}. First, we define the change of variable: $h_{u\to v} =
-\ln \frac{y_{u\to v}(z)}{z}$ so that (\ref{eq:rect}) becomes:
\BEA
\label{eq:rech}h_{u\to v} = \ln\left(1+z\sum_{w\in \d u\bac v}\theta_{wu}e^{-h_{w\to u}}\right).
\EEA

We define the function $f:[0,+\infty)^d\mapsto [0,\infty)$ as:
\BEAS
f(\bh)= \ln \left(1+z\sum_{i=1}^k
  \frac{\theta_i}{1+z\sum_{j=1}^{k_i}\theta_j^i e^{-h_j^i}}\right),
\EEAS
where the parameters $k$, $k_i$, $\theta_i$, $\theta_j^i$ and $z$ are
fixed and $d=\sum_{i=1}^kk_i$.

Iterating the recursion (\ref{eq:rech}), we can rewrite it using such
a function $f$ so that uniqueness would be implied if we show that $f$
is contracting.

For any $\bh$ and $\bh'$, we apply the mean value theorem to the
function $f(\alpha\bh+(1-\alpha)\bh')$ so that there exists
$\alpha\in [0,1]$ such that for $\bh_{\alpha}=\alpha
\bh+(1-\alpha)\bh'$,
\BEAS
|f(\bh)-f(\bh')|=|\nabla f(\bh_{\alpha})(\bh-\bh')| \leq \|\nabla f(\bh_{\alpha})\|_{L_1}\|\bh-\bh'\|_{\infty}.
\EEAS

A simple computation shows that:
\BEAS
\|\nabla f(\bh)\|_{L_1} = \frac{z\sum_{i=1}^k\theta_i
  \frac{z\sum_{j=1}^{k_i}\theta_j^ie^{-h^i_j}}{\left(1+z\sum_{j=1}^{k_i}\theta_j^i e^{-h_j^i} \right)^2}}{1+z\sum_{i=1}^k
  \frac{\theta_i}{1+z\sum_{j=1}^{k_i}\theta_j^i e^{-h_j^i}}}.
\EEAS
Let $A_i = \left(1+z\sum_{j=1}^{k_i}\theta_j^i e^{-h_j^i}\right)^{-1}$, then we get
\BEAS
\|\nabla f(\bh)\|_{L_1} =\frac{z\sum_{i=1}^k
  \theta_i(A_i-A_i^2)}{1+z\sum_{i=1}^k \theta_iA_i}=1-\frac{1+z\sum_{i=1}^k \theta_iA_i^2}{1+z\sum_{i=1}^k \theta_iA_i}.
\EEAS
By taking the partial derivatives, we note that this last expression
is maximized when all $A_i$ are equal. Then the solution for the
optimal $A_i$ reduces to a quadratic equation with solution in
$[0,+\infty)$ equals to $A_i =\frac{\sqrt{1+z\Theta}-1}{z\Theta}$,
where $\Theta=\sum_{i=1}^k\theta_i$. Substituting for the maximum
value, we get for any real vector $\bh$,
\BEAS
\|\nabla f(\bh)\|_{L_1}\leq 1-\frac{2}{\sqrt{1+z\Theta}+1}.
\EEAS
\end{proof}

\subsection{$2$-lifts}\label{sec:lift}

If $G$ is a  graph and $v\in V(G)$, the $1$-neighbourhood of $v$ is the
subgraph consisting of all edges incident upon $v$.
A graph homomorphism $\pi:G'\to G$ is a covering map if for each $v'\in
V(G')$, $\pi$ gives a bijection of the edges of the $1$-neighbourhood of
$v'$ with those of $v=\pi(v')$. $G'$ is a cover or a lift of $G$. If edges of $G=(V,E)$ have weights $\theta_e$ then the edges of $G'=(V',E')$ will also have weights with $\theta_{e'}=\theta_{\pi(e')}$.
Note that the definition of $2$-lift for matrices given in Section
\ref{sec:LBP} is consistent with the definition of $2$-lift for graphs
by identifying the matrix $A$ as the weighted incidence matrix of the
bipartite graph.

\begin{proposition}\label{prop:lift}
Let $G$ be a bipartite graph and $H$ be a $2$-lift of $G$. Then
$P_{G}(z)^2\geq P_H(z)$ for $z>0$, $\Sigma_G(t)\geq \Sigma_H(t)$ for
$t\in [0,1]$ and $\nu(H)=2\nu(G)$.
\end{proposition}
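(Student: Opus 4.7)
The three statements are naturally established in the order $P_G(z)^2\ge P_H(z)$, then $\Sigma_G(t)\ge \Sigma_H(t)$, then $\nu(H)=2\nu(G)$: the later two follow quickly from the first together with tools already recorded in this section.

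The plan for the polynomial inequality is to prove the stronger coefficient-wise bound $m_k(H)\le \sum_{i+j=k}m_i(G)\,m_j(G)$ by analysing the projection $\pi\colon H\to G$ underlying the $2$-lift. Given a matching $M$ of $H$, the image $\pi(M)$ is a multiset of edges of $G$ in which every vertex has multidegree at most two, i.e.\ a $2$-matching of $G$. For each such $2$-matching $N$ I would analyse the fiber of $\pi$, namely the set of matchings of $H$ projecting to $N$. Since $G$ is bipartite, $N$ has no odd cycle, and its connected components (in the multigraph sense) are of five types: an isolated vertex, a simple path of multiplicity-$1$ edges, a simple even cycle of multiplicity-$1$ edges, a length-$2$ multigraph cycle built from two parallel multiplicity-$1$ edges, or a single multiplicity-$2$ edge. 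On each such component I would verify two facts: (i) the number of ordered decompositions $N=M_1\sqcup M_2$ into matchings of $G$ equals $2^c$, where $c$ counts the components that are neither isolated vertices nor multiplicity-$2$ edges; and (ii) the number of matchings of $H$ projecting to $N$ is at most $2^c$, with strict inequality precisely when the signing has an odd number of crossings around some cycle component of $N$, in which case the alternating lift-choice fails to close up consistently. Summing the per-fiber bound over $2$-matchings $N$ of total size $k$ yields $m_k(H)\le \sum_{i+j=k}m_i(G)\,m_j(G)$, and in particular $P_G(z)^2\ge P_H(z)$ for every $z>0$.

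The inequality $\Sigma_G(t)\ge \Sigma_H(t)$ then follows by applying Proposition~\ref{prop:ord} with $\Phi_G=\ln P_G$ and $v(H)=2v(G)$: the hypothesis $\Phi_G(z)/v(G)\ge \Phi_H(z)/v(H)$ reads $2\ln P_G(z)\ge \ln P_H(z)$, which is exactly what was just proved. For $\nu(H)=2\nu(G)$, the lower bound is obtained by lifting a maximum matching $M^{\star}$ of $G$: the two lifts of each edge of $M^{\star}$ are vertex-disjoint in $H$, and lifts over distinct edges of $M^{\star}$ remain vertex-disjoint, so their union is a matching of $H$ of size $2\nu(G)$. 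For the upper bound, the inequality $P_G(z)^2\ge P_H(z)$ forces $\deg P_H\le \deg P_G^2$, so $\nu(H)\le 2\nu(G)$.

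The main difficulty lies in the per-fiber bound (ii): controlling how the alternating choice of lift propagates along cycle components of $N$, where the signing $s$ may or may not close up consistently. It is precisely the bipartiteness of $G$ (forbidding odd cycles in $\pi(M)$) that allows the $2$-coloring of $N$ into matchings of $G$ to exist at all, ensuring that the decomposition count $2^c$ is indeed attained and that the per-fiber inequality is tight on the trivial $2$-lift $G\sqcup G$.
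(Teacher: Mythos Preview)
Your proposal is correct and follows essentially the same argument as the paper: both compare $H$ to the trivial $2$-lift $G\sqcup G$ by grouping matchings according to their projection to $G$, observe that bipartiteness forces the projected $2$-matching to decompose into paths, even cycles, and double edges, and then count $2^{c}$ ordered decompositions versus at most $2^{c}$ lifts on each fiber; the entropy inequality and $\nu(H)=2\nu(G)$ are then deduced exactly as you do, via Proposition~\ref{prop:ord} and degree comparison plus lifting a maximum matching. Your writeup is slightly more explicit than the paper's in enumerating the component types (e.g.\ distinguishing a multiplicity-$2$ edge from a pair of parallel multiplicity-$1$ edges in the multigraph case), but the underlying idea is identical.
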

\begin{proof}
The proof follows from an argument of Csikv\'ari \cite{csikvari2014lower}.
Note that $G\cup G$ is a particular $2$-lift of $G$ with $P_{G\cup G}(z)= P_G(z)^2$. To prove the first statement of the proposition, we need to show that for any $2$-lift $H$ of $G$, we have: $w_k(G\cup G)\geq w_k(H)$.
Consider the projection of a matching of a $2$-lift of $G$ to $G$. It will consist of disjoint union of cycles of even lengths (since $G$ is bipartite), paths and double-edges when two edges project to the same edge. For such a projection $R=R_1\cup R_2\subset E$ where $R_2$ is the set of double edges, its weight is $\prod_{e\in R_1}\theta_e\prod_{e\in R_2}\theta_e^2$. Now for such a projection, we count the number of possible matchings in $G\cup G$: $n_R(G\cup G)=2^{k(R)}$, where $k(R)$ is the number of connected components of $R_1$. The number of possible matchings in $H$ is $n_R(H)\leq 2^{k(R)}$ since in each component if the inverse image of one edge is fixed then the inverse images of all other edges is also determined. There is no equality as in general not every cycle can be obtained as a projection of a matching of a $2$-lift. For example, if one considers a $8$-cycle as a $2$-lift of a $4$-cycle, then no matching will project on the whole $4$-cycle.

Hence we proved that $w_k(G\cup G)\geq w_k(H)$ so that $P_{G}(z)^2\geq
P_H(z)$ for $z>0$ and the second statement follows from Proposition
\ref{prop:ord}.
For the last statement, since $P_G(z)^2\geq P_H(z)$, we have
$2\nu(G)\geq \nu(H)$ but the opposite inequality is true for any graph
$G$ since a maximum matching in $G$ can be lifted to a matching in $H$
with size twice the size of the original matching.
\end{proof}

Given a graph $G$ with a distinguished vertex $v\in V$, we construct the
(infinite) rooted tree $(T(G),v)$ of non-backtracking walks at $v$ as follows:
its vertices correspond to the finite non-backtracking walks in G
starting in $v$, and we connect two walks if one of them is a one-step
extension of the other. With a slight abuse of notation, we denote by
$v$ the root of the tree of non-backtracking walks started at $v$.
Note that also we constructed $T(G)$ from a particular vertex $v$, this choice is irrelevant.
It is easy to see that $T(G)$ is a cover of $G$, indeed it is the
(unique up to isomorphism) cover of $G$ that is also a cover of every
other cover of $G$. $T(G)$ is called the universal cover of $G$.

Since the local recursions are the same for both
$\Rcal_{T(G)}$ and $\Rcal_G$ and since there is a unique fixed point
for both $z\Rcal_{T(G)}$ and $z\Rcal_G$, the proposition below follows:
\begin{proposition}\label{prop:same}
Let $G$ be a finite graph and $T(G)$ be its universal cover and
associated cover $\pi:T(G)\to G$. By
Propositions \ref{prop:xe} and \ref{prop:xefin}, we can define:
\BEAS
\tilde{\by}(z) =
z\Rcal_{T(G)}(\tilde{\by}(z))\:,\mbox{ and, }\: {\by}(z) =
z\Rcal_G({\by}(z)).
\EEAS
We have $\pi(\tilde{\by}(z))={\by(z)}$,
i.e. $\tilde{y}_{\ore}(z)=y_{\pi(\ore)}(z)$.
\end{proposition}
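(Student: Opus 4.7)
The plan is to leverage the uniqueness of the fixed point on $T(G)$ established in Theorem \ref{th:infint}, combined with the observation that the recursion $\Rcal$ is purely local and therefore transported intact by the covering map $\pi$. Concretely, I will exhibit the pullback vector $\by \circ \pi$ as a fixed point of $z\Rcal_{T(G)}$ and invoke uniqueness.

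First I would write down explicitly the candidate. For each directed edge $\ore' = u' \to v'$ of $T(G)$, set $\tilde{y}_{\ore'}^{\star} := y_{\pi(\ore')}(z)$, where $\by(z)$ is the unique fixed point of $z\Rcal_G$ provided by Proposition \ref{prop:xefin}. Since $\by(z) \in (0,\infty)^{\orE}$ and $G$ is finite, $\tilde{\by}^\star$ takes values in a compact subset of $(0,\infty)^{\orE(T(G))}$, and in particular lies in the domain of $\Rcal_{T(G)}$. Moreover, $T(G)$ has bounded degree because $\pi$ is a local bijection on $1$-neighbourhoods and $G$ has bounded degree, so Theorem \ref{th:infint} applies and guarantees a unique fixed point of $z\Rcal_{T(G)}$ in $(0,\infty)^{\orE(T(G))}$.

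Next I would verify that $\tilde{\by}^\star$ is a fixed point. By definition of a covering, for each $v' \in V(T(G))$ the map $\pi$ restricts to a bijection between the edges incident to $v'$ and those incident to $\pi(v')$; in particular, for any directed edge $u' \to v'$ in $T(G)$, the set $\d u' \setminus v'$ is mapped bijectively by $\pi$ onto $\d \pi(u') \setminus \pi(v')$. Using the convention $\theta_{e'} = \theta_{\pi(e')}$ for weights on the lift, the recursion on $T(G)$ gives
\begin{eqnarray*}
[z\Rcal_{T(G)}(\tilde{\by}^\star)]_{u'\to v'}
&=& \frac{z}{1+\sum_{w'\in \d u' \bac v'}\theta_{w'u'}\tilde{y}^{\star}_{w'\to u'}} \\
&=& \frac{z}{1+\sum_{w\in \d \pi(u') \bac \pi(v')}\theta_{wu}y_{w\to u}(z)} \\
&=& y_{\pi(u')\to\pi(v')}(z) = \tilde{y}^\star_{u'\to v'},
\end{eqnarray*}
where the second equality uses the bijection described above and the third uses the fixed point equation $\by(z) = z\Rcal_G(\by(z))$.

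Finally, having verified that $\tilde{\by}^\star$ is a fixed point of $z\Rcal_{T(G)}$, uniqueness from Theorem \ref{th:infint} yields $\tilde{\by}(z) = \tilde{\by}^\star$, i.e.\ $\tilde{y}_{\ore'}(z) = y_{\pi(\ore')}(z)$ for every $\ore' \in \orE(T(G))$, which is exactly the claim. The only thing worth being careful about is checking that the map $\pi$ on directed edges really does induce the correct bijection of the ``$\d u \bac v$''-sets entering the recursion, which is immediate from the definition of a covering map but is the single point where the argument is not purely formal rewriting.
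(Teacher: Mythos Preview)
Your proof is correct and is exactly the argument the paper has in mind: the paper's entire justification is the one sentence ``Since the local recursions are the same for both $\Rcal_{T(G)}$ and $\Rcal_G$ and since there is a unique fixed point for both $z\Rcal_{T(G)}$ and $z\Rcal_G$, the proposition below follows,'' and you have simply unpacked this by exhibiting the pullback $\by\circ\pi$ as a fixed point and invoking Theorem~\ref{th:infint} for uniqueness on the (possibly infinite) tree. Your observation that the covering property gives the needed bijection $\partial u'\setminus v' \leftrightarrow \partial \pi(u')\setminus \pi(v')$ is precisely what makes ``the local recursions are the same'' rigorous.
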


\subsection{The framework of local weak convergence}\label{sec:lwc}

This section gives a brief account of the framework of local weak
convergence. For more details, we refer to the surveys \cite{aldste, aldlyo}. 

\paragraph{Rooted graphs.} A \textit{rooted graph} $(G,o)$ is a graph $G=(V,E)$ together with a distinguished vertex $o\in V$, called the
\textit{root}. We let $\cGs$ denote the set of all locally finite connected rooted graphs considered up to \textit{rooted isomorphism}, i.e. $(G,o)\equiv(G',o')$ if there exists a bijection $\gamma\colon V\to V'$ that preserves roots ($\gamma(o)=o'$) and 
adjacency ($\{i,j\}\in E\Longleftrightarrow \{\gamma(i),\gamma(j)\}\in E'$). We write $[G,o]_h$ for the (finite) rooted subgraph induced by the vertices lying at graph-distance at
most $h\in\N$ from $o$. The distance
$$\textsc{dist}\left((G,o),(G',o')\right):=\frac{1}{1+r} \ \textrm{ where }\  r=\sup\left\{h\in\N\colon [G,o]_h\equiv [G',o']_h\right\},$$ 
turns $\cGs$ into a complete separable metric space, see \cite{aldlyo}.

With a slight abuse of notation, $(G,o)$ will denote an equivalence
class of rooted graph also called unlabeled rooted graph in graph
theory terminology.
Note that if two rooted graphs are isomorphic, then their rooted trees
of non-backtracking walks are also isomorphic. It thus makes sense to
define $(T(G),o)$ for elements $(G,o)\in \cGs$.

\begin{proposition}\label{prop:lin}
For any graph $G =(V,E)$, there exists a graph sequence
$\{G_n\}_{n\in \N}$ such that $G_0=G$, $G_n$ is a $2$-lift of
$G_{n-1}$ for $n\geq 1$. Hence $G_n$ is a $2^n$-lift of $G$ and we
denote by $\pi_n:G_n\to G$ the corresponding covering.
For any $v\in V$, if $v_n\in \pi_n^{-1}(v)$, we have $(G_n,v_n) \to
(T(G),v)$ in $\cGs$.
\end{proposition}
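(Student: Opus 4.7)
The plan is to reduce the convergence $(G_n, v_n) \to (T(G), v)$ in $\cGs$ to the assertion that the girth of $G_n$ tends to infinity, and then to exhibit such a sequence of iterated 2-lifts.

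Each $G_n$ is a cover of $G$ (being an iterated 2-lift), so the universal cover factorizes as $T(G) \to G_n \to G$, inducing a covering $\pi_n : T(G) \to G_n$ with $\pi_n(v) = v_n$. Its restriction to the rooted $h$-ball $[T(G), v]_h$ surjects onto $[G_n, v_n]_h$, and since $[T(G), v]_h$ is a tree, the two rooted $h$-balls are isomorphic if and only if $[G_n, v_n]_h$ is itself a tree. Any cycle of $G_n$ contained in $[G_n, v_n]_h$ has all its vertices within distance $h$ of $v_n$, and consequently length at most $2h$ (by tracking how the distance from $v_n$ varies by at most one per step around the cycle), hence $\mathrm{girth}(G_n) > 2h$ forces $[G_n, v_n]_h \cong [T(G), v]_h$. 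It therefore suffices to produce $G_n$ with $\mathrm{girth}(G_n) \to \infty$.

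A 2-lift $H'$ of $H$ is specified by a signing $s: E(H) \to \{\pm 1\}$; a cycle $C$ of length $\ell$ in $H$ contributes to $H'$ either two vertex-disjoint cycles of length $\ell$ (when $\prod_{e \in C} s(e) = +1$) or a single cycle of length $2\ell$ (when the product is $-1$). Moreover, for $\ell < 2\,\mathrm{girth}(H)$ every non-backtracking closed walk of length $\ell$ in $H$ is a simple cycle, so cycles of length $\mathrm{girth}(H)$ in $H'$ arise only from this mechanism. Consequently $\mathrm{girth}(H') \geq \mathrm{girth}(H)$, with strict increase precisely when all shortest cycles of $H$ receive sign $-1$. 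This cannot always be arranged in one step (a $\mathbb{Z}/2$-linear dependence among shortest cycles, as in a theta graph of three parallel equal-length paths, can force at least one to have sign $+1$), but the remaining shortest cycles in $H'$ are vertex-disjoint pairs of copies of sign-$+1$ cycles of $H$, and two disjoint cycles are linearly independent in the $\mathbb{Z}/2$-cycle space of $H'$. Iterating the procedure on $H'$ thus strictly decreases the number of shortest cycles; after boundedly many 2-lifts the number of shortest cycles reaches zero and the girth strictly grows.

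Assembling: starting from $G_0 = G$ and applying the above successively to reach girths $L = 1, 2, 3, \ldots$, I concatenate the resulting finite chains of 2-lifts into an infinite sequence $(G_n)$ with $\mathrm{girth}(G_n) \to \infty$, so $(G_n, v_n) \to (T(G), v)$ in $\cGs$ by the first reduction. The principal obstacle is the combinatorial claim that linear dependencies among shortest cycles resolve within boundedly many 2-lifts; the key insight is that sign-$+1$ lifts of a single cycle are vertex-disjoint and hence linearly independent, which allows the dependency structure on shortest cycles to be simplified step by step until all can be simultaneously unfolded.
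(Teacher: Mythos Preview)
Your reduction to $\mathrm{girth}(G_n)\to\infty$ is correct and matches the paper. The gap is in the combinatorial step where you claim that iterating on $H'$ ``strictly decreases the number of shortest cycles''. You observe that the two lifts $C^{(0)},C^{(1)}$ of a single sign-$+1$ cycle $C$ are vertex-disjoint, hence linearly independent over $\mathbb{F}_2$. But this says nothing about dependencies \emph{across} different pairs: if $C_1+C_2+C_3=0$ in $H$, then $\sum_i(C_i^{(0)}+C_i^{(1)})=0$ in $H'$, so odd-support dependencies in $H$ produce dependencies among the lifted cycles in $H'$. In concrete examples (e.g.\ $K_{2,n}$, where the $\binom{n}{2}$ four-cycles span a space of dimension only $n-1$) one can check that no signing of $H$ makes fewer than half of the shortest cycles sign~$+1$, and after lifting the dependency structure does not obviously improve. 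Your sentence ``two disjoint cycles are linearly independent \dots\ iterating the procedure on $H'$ thus strictly decreases the number'' is a non sequitur: linear independence within each pair does not yield a signing of $H'$ with strictly fewer than half of its shortest cycles sign~$+1$.

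The paper (following Linial) bypasses this structural difficulty with a one-line averaging argument. For a uniformly random signing, each $\gamma$-cycle of $H$ has sign $+1$ with probability $1/2$ and then contributes two $\gamma$-cycles to $H'$, otherwise none; hence the expected number of $\gamma$-cycles in $H'$ equals the current number $k$. Since the trivial signing (all $+1$, giving $H'\cong H\sqcup H$) realises $2k>k$, some signing must realise strictly fewer than $k$. This gives strict decrease in a \emph{single} $2$-lift, and iterating drives the number of $\gamma$-cycles to zero, after which the girth increases. Replacing your second paragraph by this averaging argument repairs the proof.
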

\begin{proof}
The proof follows from an argument of Nathan Linial \cite{linial_slides}, see also
\cite{csikvari2014lower}.

A random $2$-lift $H$ of a base graph $G$ is the random graph obtained
by choosing between the two pairs of edges $((u,0),(v,0))$ and
$((u,1),(v,1)) \in E(H)$ or $((u,0),(v,1))$ and $((u,1),(v,0)) \in E(H)$
with probability $1/2$ and each choice being made independently.

Let $G$ be a graph with girth $\gamma$ and let $k$ be the number of
cycles in $G$ with size $\gamma$. Let $X$ be the number of
$\gamma$-cycles in $H$ a random $2$-lift of $G$. 
The girth of $H$ must be at least $\gamma$ and a $\gamma$-cycle in $H$
must be a lift of a $\gamma$-cycle in $G$.
A $\gamma$-cycle in $G$ yields: a $2\gamma$-cycle in $H$ with
probability $1/2$; or two $\gamma$-cycles in $H$ with probability
$1/2$. Hence we have $\EE[X]=k$. But $G\cup G$ (the trivial lift) has
$2k$ $\gamma$-cycles. Hence there exists a $2$-lift with strictly less
than $k$ $\gamma$-cycles.
By iterating this step, we see that there exists a sequence $\{G_n\}$
of $2$-lifts such that for any $\gamma$, there exists a $n(\gamma)$
such that for $j\geq n(\gamma)$, the graph $G_j$ has no cycle of
length at most $\gamma$. This implies that for any $v\in V$ and
$v_j\in \pi_j^{-1}(v)$, we have
$\textsc{dist}\left((G_j,v_j),(T(G),v)\right) \leq \frac{2}{\gamma}$
and the proposition follows.
\end{proof}

\paragraph{Local weak limits.} Let $\cP(\cGs)$ denote the set of Borel probability measures on $\cGs$, equipped with the usual topology of weak convergence (see e.g. \cite{billingsley}). Given a finite graph $G=(V,E)$, we construct a random element of $\cGs$ by choosing uniformly at random a vertex
$o\in V$ to be the root, and restricting $G$ to the
connected component of $o$. The resulting law is denoted by $\cU(G)$. If $\{G_n\}_{n\geq 1}$ is a sequence of finite graphs 
such that $\{\cU(G_n)\}_{n\geq 1}$ admits a weak limit $\cL \in\cP(\cGs)$, we call $\cL$ the \textit{local weak limit} of $\{G_n\}_{n\geq 1}$. If $(G,o)$ denotes a random element of $\cGs$ with law $\cL$, we shall use the following slightly abusive notation :
$G_n \lwc (G,o)$ and for $f:\cG_{\star}\to \R$:
\BEAS
\EE_{(G,o)} \left[ f(G,o)\right] = \int_{\cG_{\star}}f(G,o) d\cL(G,o).
\EEAS

As a direct consequence of Proposition \ref{prop:lin}, we get:
\begin{proposition}\label{prop:lwl}
For $G=(V,E)$, let $\{G_n\}_{n\in \N}$ be the sequence of $2$-lifts
defined in Proposition \ref{prop:lin}.
Then $G_n \lwc (T(G),o)$ where $T(G)$ is the universal cover of $G$
with associated cover $\pi:T(G) \to G$ and $o$ is the inverse image of
a uniform vertex $v$ of $G$, $o=\pi^{-1}(v)$.
\end{proposition}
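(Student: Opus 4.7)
The plan is to deduce Proposition \ref{prop:lwl} directly from Proposition \ref{prop:lin} by unwinding the definition of local weak convergence through the fiber structure of the covering map $\pi_n : G_n \to G$. Since $o$ is described as the canonical lift of a uniformly chosen $v \in V$, the target limit law on $\cGs$ is the mixture
\BEAS
\cL \;=\; \frac{1}{|V|}\sum_{v\in V}\delta_{(T(G),v)},
\EEAS
and what must be shown is $\cU(G_n)\to\cL$ weakly in $\cP(\cGs)$.

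First, I would use the fact that $G_n$ is a $2^n$-lift of $G$, so $V(G_n)$ partitions into $|V|$ fibers $\pi_n^{-1}(v)$, each of cardinality $2^n$. Sampling a uniform root in $G_n$ is therefore the same as first drawing $v$ uniformly in $V$ and then drawing $u_n$ uniformly in $\pi_n^{-1}(v)$. For any bounded continuous $f:\cGs\to\R$ this gives the exact identity
\BEAS
\int f\, d\cU(G_n) \;=\; \frac{1}{|V|}\sum_{v\in V}\frac{1}{2^n}\sum_{u\in\pi_n^{-1}(v)} f(G_n,u).
\EEAS
The next step is to invoke Proposition \ref{prop:lin}: for every fixed $v\in V$ and every choice $u_n\in \pi_n^{-1}(v)$, we have $(G_n,u_n)\to (T(G),v)$ in $\cGs$, hence $f(G_n,u_n)\to f(T(G),v)$. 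Averaging over the $2^n$ preimages and summing over $v$, the right-hand side above converges to $\frac{1}{|V|}\sum_{v\in V} f(T(G),v)=\int f\, d\cL$, which is the desired weak convergence.

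The only nontrivial point, and what I would single out as the key step, is that the convergence $(G_n,u_n)\to(T(G),v)$ is \emph{uniform} over the choice of lift $u_n\in \pi_n^{-1}(v)$; without this, one could not pass the limit through the inner average over the fiber. This uniformity is however built into the argument of Proposition \ref{prop:lin}: the bound $\textsc{dist}((G_j,u),(T(G),v))\le 2/\gamma$ is controlled only by the girth of $G_j$ and not by the particular lift chosen, because once the girth of $G_n$ exceeds $2h$ the $h$-ball around any $u\in\pi_n^{-1}(v)$ in $G_n$ is automatically isomorphic (as a rooted graph) to the $h$-ball $[T(G),v]_h$, by the very definition of a covering map. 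Everything else in the proof is routine bookkeeping on the fiber decomposition, and no additional probabilistic ingredient is needed since $\{G_n\}$ is the deterministic sequence of $2$-lifts constructed in Proposition \ref{prop:lin} (in contrast with the random lifts $L_n(G)$, whose local weak convergence is handled separately in Section \ref{sec:rlp}).
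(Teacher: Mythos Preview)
Your proposal is correct and is essentially the approach the paper intends: the paper gives no explicit proof and simply states the result ``as a direct consequence of Proposition~\ref{prop:lin},'' and your argument is precisely the natural unpacking of that claim via the fiber decomposition of $\cU(G_n)$. Your emphasis on the uniformity over the fiber is well placed---it is exactly what the girth bound in the proof of Proposition~\ref{prop:lin} provides, and it is the only point where a reader might hesitate.
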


We now state the corresponding well-known result for random lifts:
\begin{proposition}\label{prop:lwrl}
For $G=(V,E)$, let $L_n(G)$ be a random $n$-lift of $G$.
Then $L_n(G) \lwc (T(G),o)$ a.s. where $T(G)$ is the universal cover of $G$
with associated cover $\pi:T(G) \to G$ and $o$ is the inverse image of
a uniform vertex $v$ of $G$, $o=\pi^{-1}(v)$.
\end{proposition}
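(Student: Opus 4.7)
The plan is to verify almost sure local weak convergence by testing against a countable family of cylinder events: for every $h \in \N$ and every finite rooted graph $(H,o_H)$, I would show that the empirical fraction
\BEAS
p_H^n := \frac{1}{n v(G)}\sum_{w\in V(L_n(G))} \ind\{[L_n(G),w]_h \equiv (H,o_H)\}
\EEAS
converges almost surely to $\PP\bigl([T(G),o]_h \equiv (H,o_H)\bigr)$. Since $\cGs$ is separable and the sets $\{[G,o]_h \equiv H\}$ generate the Borel $\sigma$-algebra, this implies $\cU(L_n(G)) \to \cL(T(G),o)$ weakly, almost surely.

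The crucial structural observation is that $L_n(G)$ is, by construction, an $n$-fold cover of $G$ with covering map $\pi$. Hence for every $w \in V(L_n(G))$ the ball $[L_n(G),w]_h$ is a quotient of $[T(G),\pi(w)]_h$, with rooted isomorphism holding if and only if the breadth-first exploration of $L_n(G)$ from $w$ up to depth $h$ never revisits a previously discovered vertex. So checking whether $[L_n(G),w]_h \equiv [T(G),\pi(w)]_h$ reduces to checking absence of short cycles through $w$.

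To identify the limit in expectation, I would run BFS from a fixed root $w=(v,i)$ and reveal the values of the permutations $\sigma_e$ one coordinate at a time. Each fresh coordinate is uniform among $n-O_h(1)$ remaining indices, and a ``collision'' (an edge pointing back into the already-explored set) occurs with probability $O(1/n)$. Since $[T(G),v]_h$ has at most some constant $C(G,h)$ vertices, a union bound gives $\PP\bigl([L_n(G),w]_h \not\equiv [T(G),v]_h\bigr) = O(1/n)$, uniformly in $w$. Because $\pi(w)$ is uniform on $V$ whenever $w$ is uniform on $V(L_n(G))$, averaging over $w$ yields $\EE[p_H^n] \to \PP([T(G),o]_h \equiv (H,o_H))$.

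To promote convergence in expectation to almost sure convergence, I would apply Azuma's inequality to the Doob martingale obtained by exposing the permutations $(\sigma_e)_{e\in E(G)}$ sequentially. Resampling a single $\sigma_e$ only changes the $h$-neighborhood of vertices lying within graph-distance $h$ of the endpoints of some lift of $e$, a set whose cardinality is bounded uniformly in $n$ in terms of $h$ and the maximal degree of $G$. Consequently the bounded-differences constant is $O(1/n)$, so
\BEAS
\PP\bigl(|p_H^n - \EE[p_H^n]| > \varepsilon\bigr) \leq 2\exp(-c_H n\varepsilon^2),
\EEAS
and Borel--Cantelli yields almost sure convergence. The main delicate point is the uniformity of the bounded-differences estimate, but this is guaranteed by the fact that $L_n(G)$ inherits the maximal degree of $G$, so $h$-balls have size bounded independently of $n$ and of the realization.
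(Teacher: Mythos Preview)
The paper does not actually prove this proposition: it is stated as a ``well-known result'' and left without proof. So there is no paper argument to compare against, and your outline is the standard way to establish this fact. The identification of the limit via the $O(1/n)$ collision estimate in the BFS exploration is correct and cleanly explained.

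There is, however, a genuine error in your concentration step. You form the Doob martingale by exposing the permutations $(\sigma_e)_{e\in E(G)}$ \emph{one full permutation at a time}, and then claim that resampling a single $\sigma_e$ only affects the $h$-neighbourhoods of a set of vertices whose cardinality is bounded uniformly in $n$. This is false: resampling the entire permutation $\sigma_e$ changes all $n$ lifted edges above $e$ simultaneously, so the set of vertices whose $h$-ball may change has size $\Theta(n)$. After the $1/(n\,v(G))$ normalisation, the bounded-differences constant for each of the $|E(G)|$ martingale increments is $O(1)$, not $O(1/n)$, and Azuma yields a bound that does not improve with $n$ --- hence Borel--Cantelli does not apply.

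The fix is to use a finer filtration. One clean choice is to generate each $\sigma_e$ via a Fisher--Yates shuffle, i.e.\ as a product of $n-1$ independent transposition choices, and expose these $(n-1)|E(G)|$ choices one at a time. Altering a single such choice modifies $\sigma_e$ by a transposition, which affects at most two lifted edges and hence the $h$-balls of at most $C(G,h)$ vertices. The bounded-differences constant is then genuinely $O(1/n)$, the sum of squares over all $(n-1)|E(G)|$ increments is $O(1/n)$, and Azuma gives
\[
\PP\bigl(|p_H^n - \EE[p_H^n]| > \varepsilon\bigr) \leq 2\exp(-c_H\, n\, \varepsilon^2),
\]
which is summable. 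With this correction your argument goes through.
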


We are now ready to use the results of the above sections.
The existence of the limits for the partition function, the internal
energy of the monomer-dimer model is known to be continuous for the
local weak convergence (in a much more general setting than here)
\cite{heilmannlieb, bls12, soda12, abert2014matching} but the explicit
expressions given in the right-hand side below are new.

\begin{theorem}\label{th:lwclim}
Let $G$ be a finite graph and $T(G)$ be its universal cover.
Let $(G_n)_{n\geq 1}$ be a sequence 
such that $G_n\lwc (T(G),o)$. We denote by 
$\bx(z)$ the
vector defined by (\ref{def:xe}) in Proposition \ref{prop:xe} where $\by(z) =
z\Rcal_{G}(\by(z))$.
Then we have as $n\to \infty$, for $z>0$,
\BEA
\label{eq:nu}\lim_{n\to \infty} \frac{1}{|V_n|} \nu(G_n) &=& \frac{1}{v(G)}\nu^*(G),\\
\label{eq:PG}\lim_{n\to \infty} \frac{1}{|V_n|} \ln P_{G_n} (z)
&=&\frac{1}{v(G)}\Phi^B_G(\bx(z),z),\\
\label{eq:UG}\lim_{n\to \infty} \frac{1}{|V_n|} U^s_{G_n} (z)
&=&\frac{1}{v(G)} U^B_G(\bx(z)),\\
\label{eq:SG}\lim_{n\to \infty} \frac{1}{|V_n|} \left(S_{G_n} (z)+
  U^\theta_G(z)\right) &=&\frac{1}{v(G)} S^B_{G}(\bx(z)),\\
\label{eq:cS}\lim_{n\to \infty}\cS_{G_n}(t) &=& \cS^B_G(t),\mbox{ for } t<\tau^*(G).
\EEA
\end{theorem}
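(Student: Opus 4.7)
The continuity of $|V_n|^{-1}\ln P_{G_n}(z)$, $|V_n|^{-1}U^s_{G_n}(z)$, and $|V_n|^{-1}S_{G_n}(z)$ under local weak convergence is classical and already covered by the references \cite{heilmannlieb,bls12,soda12,abert2014matching} cited just before the theorem, so the limits on the left-hand sides of (\ref{eq:PG}), (\ref{eq:UG}), (\ref{eq:SG}) exist and equal the corresponding functionals evaluated at the random rooted graph $(T(G),o)$. What is new, and what my proof needs to establish, is that those limit functionals coincide with the Bethe expressions on the right-hand side. The strategy is therefore: (i) express the monomer--dimer functionals on $T(G)$ in terms of the Bethe fixed point $\tilde{\by}(z)$ on $T(G)$; (ii) translate via Proposition \ref{prop:same} into the fixed point $\by(z)$ on the base graph $G$; (iii) deduce (\ref{eq:nu}) and (\ref{eq:cS}) by taking appropriate limits in $z$.

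For the internal energy (\ref{eq:UG}), LWC continuity first gives $|V_n|^{-1}U^s_{G_n}(z)\to -\tfrac12\EE_{(T(G),o)}\sum_{e\in\d o}\mu^z_{T(G)}(B_e=1)$. On the infinite tree $T(G)$ the marginal $\mu^z_{T(G)}(B_e=1)$ is given \emph{exactly} by the Bethe formula (\ref{def:xe}) applied to the fixed point $\tilde{\by}(z)$: Heilmann--Lieb's edge recursion is exact on any tree; the depth-$t$ truncation approximates this marginal up to an error $C\lambda^t$ by the sup-norm contraction established in the proof of Theorem \ref{th:infint} with $\lambda<1$ depending only on $z$ and the maximum weighted degree; and Theorem \ref{th:infint} identifies the limit of these iterates with the unique positive solution of $\by=z\Rcal_{T(G)}(\by)$. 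By Proposition \ref{prop:same}, $\tilde y_{\ore}(z)=y_{\pi(\ore)}(z)$ and hence $\mu^z_{T(G)}(B_e=1)=x_{\pi(e)}(z)$; averaging over the uniformly random root $o=\pi^{-1}(v)$ yields $-v(G)^{-1}\sum_e x_e(z)=v(G)^{-1}U^B_G(\bx(z))$, which is (\ref{eq:UG}).

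Identities (\ref{eq:PG}) and (\ref{eq:SG}) then follow by integration and algebra. Since $\Phi_{G_n}'(z)=-U^s_{G_n}(z)/z$, $\Phi_{G_n}(0)=0$, and $-U^s_{G_n}(z)/|V_n|\in[0,1/2]$ uniformly in $n$, dominated convergence combined with (\ref{eq:UG}) gives $|V_n|^{-1}\Phi_{G_n}(z)\to v(G)^{-1}\int_0^z -U^B_G(\bx(s))/s\,ds$. On the limit side Proposition \ref{prop:max} identifies $\bx(z)$ as the interior maximizer of the smooth concave functional $\Phi^B_G(\cdot,z)$ on $FM(G)$ (interior because $x_e(z)\in(0,1)$ and $\Dcal_v(\by(z))<1$), so the envelope theorem yields $\tfrac{d}{dz}\Phi^B_G(\bx(z),z)=-U^B_G(\bx(z))/z$, and the integral equals $\Phi^B_G(\bx(z),z)/v(G)$; this is (\ref{eq:PG}). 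Identity (\ref{eq:SG}) is the algebraic consequence of $S_G+U^\theta_G=\Phi_G+U^s_G\ln z$ applied to both sides. For (\ref{eq:cS}), strict monotonicity of $z\mapsto U^B_G(\bx(z))$ --- from (\ref{eq:UBD}) and Proposition \ref{prop:xe} --- makes $t\mapsto z^B_t$ continuous on $[0,\tau^*(G))$, and the uniform-on-compacts convergence of $U^s_{G_n}/|V_n|$ forces $z_t(G_n)\to z^B_t$, whence $\cS_{G_n}(t)\to \cS^B_G(t)$.

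For (\ref{eq:nu}) I would combine $\nu(G_n)=\lim_{z\to\infty}(-U^s_{G_n}(z))$ with the monotone limit $\lim_{z\to\infty}(-U^B_G(\bx(z)))=\nu^*(G)$ provided by (\ref{eq:limD}); together with (\ref{eq:UG}) and the uniform bound $\nu(G_n)/|V_n|\le 1/2$, this delivers $\liminf_n \nu(G_n)/|V_n|\ge \nu^*(G)/v(G)$. The matching upper bound uses $\nu(G_n)\le \nu^*(G_n)$ together with $\nu^*(G_n)/|V_n|\to \nu^*(G)/v(G)$, a standard LP-duality statement about local weak convergence that appears in \cite{bls12,abert2014matching}. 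The main obstacle is Step (i) above: the Bethe recursion is only literally exact on a tree, so on a finite $G_n$ one must control the dependence of edge marginals on the boundary imposed by the non-tree-like part of $G_n$; this is precisely where the sup-norm contraction of Theorem \ref{th:infint} combined with the a priori bound $y_{\ore}\in[0,z]$ on all messages enters, and it is what converts the quenched computation on the infinite tree $T(G)$ into an explicit formula in terms of $\bx(z)$ on the finite graph $G$.
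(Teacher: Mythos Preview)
Your proposal is correct and follows essentially the same route as the paper: prove (\ref{eq:UG}) first by combining the local-weak-convergence continuity from \cite{bls12} with tree-exactness of the Bethe recursion and Proposition \ref{prop:same}; integrate in $z$ to obtain (\ref{eq:PG}); deduce (\ref{eq:SG}) algebraically; and get (\ref{eq:cS}) by showing $z_t(G_n)\to z^B_t$ from the strict monotonicity of $z\mapsto U^B_G(\bx(z))$. The only cosmetic differences are that the paper fixes the integration constant in (\ref{eq:PG}) by matching the $z\to\infty$ asymptotics rather than your anchor at $z=0$, and it verifies $\tfrac{d}{dz}\Phi^B_G(\bx(z),z)=-U^B_G(\bx(z))/z$ by the explicit computation of Lemma \ref{lem} rather than invoking the envelope theorem via Proposition \ref{prop:max}; these are the same argument in different clothing.

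The one soft spot is your upper bound in (\ref{eq:nu}). The assertion that $\nu^*(G_n)/|V_n|\to \nu^*(G)/v(G)$ under local weak convergence is not what \cite{bls12,abert2014matching} actually state, and it is not obvious as a ``standard LP-duality'' fact (the fractional matching number is a global LP value, not a local functional). What those references prove directly---and what the paper invokes---is that $\nu(G_n)/|V_n|$ itself converges, as the $z\to\infty$ limit of the monomer--dimer internal energy on the limiting tree; combined with (\ref{eq:limD}) this identifies the limit as $\nu^*(G)/v(G)$. Your $\liminf$ argument is fine; for the $\limsup$ simply cite \cite{bls12} rather than routing through $\nu^*(G_n)$.
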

\begin{proof}
In \cite{heilmannlieb, bls12}, it is shown that the root exposure probability
satisfies (with our notation):
\BEAS
r_{u\to v}(z) = \frac{1}{1+z\sum_{w\in \d u\bac v}\theta_{wu}r_{w\to u}(z)}.
\EEAS
Hence we can use directly results from \cite{bls12} by the simple
change of variable: $y_{u\to v}(z) = zr_{u\to v}(z)$. In particular
Theorem 6 in \cite{bls12} implies that
\BEAS
\lim_{n\to \infty} \frac{1}{|V_n|} U_{G_n} (z) &=&\frac{1}{2}
\EE_{(T(G),o)}\left[1-\frac{1}{1+z\sum_{\ore \in \d o} \theta_er_{\ore}(z)}
\right]\\
&=& \frac{1}{2} \EE_{(T(G),o)}\left[ \frac{\sum_{\ore \in \d o}
    \theta_ey_{\ore}(z)}{1+\sum_{\ore \in \d o} \theta_ey_{\ore}(z)}\right]\\
&=& \frac{1}{2} \EE_{(T(G),o)}\left[ \Dcal_o(\by(z))\right],
\EEAS
and (\ref{eq:UG}) follows from Propositions \ref{prop:same} and
\ref{prop:xe}. (\ref{eq:nu}) follows by taking the limit $z\to \infty$ as shown in Theorem 11 in \cite{bls12} and (\ref{eq:limD}) in Proposition \ref{prop:xe}.

We now prove (\ref{eq:PG}). We start by noting that $\Phi_G'(z) =
\frac{U_G(z)}{z}$ so that the convergence of $\frac{1}{|V_n|} \ln
P_{G_n} (z)$ follows from (\ref{eq:UG}) and Lebesgue dominated
convergence theorem (see Corollary 7 in \cite{bls12}).
We only need to check the validity of the right-hand side expression
in (\ref{eq:PG}).

Note that, we have with $\theta_{\min} = \min_e \theta_e>0$ and $\theta_{\max} = \max_e \theta_e>0$
\BEAS
\frac{1}{|V_n|}\frac{\ln P_{G_n}(z)}{\ln z} \geq  -U^s_{G_n}(z) +
\frac{|E_n|}{|V_n|} \frac{\ln \theta_{\min}}{\ln z},
\EEAS
and since the number of matching is upper bounded by $2^{|E_n|}$, we have
\BEAS
\frac{1}{|V_n|}\frac{\ln P_{G_n}(z)}{\ln z} \leq  -U^s_{G_n}(z)
+\frac{|E_n|}{|V_n|} \frac{\ln \theta_{\max}}{\ln z}+\frac{|E_n|\ln
  2}{|V_n|\ln z}.
\EEAS
Hence, taking first the limit $n\to\infty$ and then the limit $z\to \infty$, we have
\BEAS
\lim_{z\to \infty} \lim_{n\to \infty}\frac{1}{|V_n|}\frac{\ln P_{G_n}(z)}{\ln z} = \frac{\nu^*(G)}{v(G)}.
\EEAS
Since $\frac{1}{v(G)}\Phi^B_G(\bx(z),z) \sim
\frac{\nu(G)}{v(G)} \ln z$ by Proposition \ref{prop:xe} (note that $S_G^B(\bx)$ is bounded), we only need to check that the derivative with respect to $z$ of the right-hand term in
(\ref{eq:PG}) is $\frac{U^B_G(\bx(z))}{z}$.
\begin{lemma}\label{lem}
In the setting of Proposition \ref{prop:xe}, we have
\BEA
\label{eq:xe(z)} \frac{x_e(z)(1-x_e(z))}{z}=  \theta_e\left(1-\sum_{e'\in \d u}x_{e'}(z)\right)\left(1-\sum_{e'\in \d v}x_{e'}(z)\right)
\EEA
\end{lemma}
\begin{proof}
Note that $\sum_{f\in \d v}x_f(z)=\Dcal_v(\by(z))$, so that we have by (\ref{def:D2})
\begin{eqnarray*}
\left(1-\sum_{f\in \d v}x_f(z)\right)&=& \left(1-\frac{\sum_{\ore \in
      \d v} \theta_ey_{\ore}(z)}{1+\sum_{\ore \in \d v} \theta_ey_{\ore}(z)}\right)\\
&=& \left( 1+\sum_{\ore \in \d v} \theta_ey_{\ore}(z)\right)^{-1}
\end{eqnarray*}
We have for $e=(uv)\in E$, 
\BEAS
x_e(z) &=& \frac{\theta_{e}y_{u\to v}(z)}{\frac{z}{y_{v\to u}(z)}+\theta_{e}y_{u\to
    v}(z)},
\EEAS
and using the fact that $\by(z)=z\Rcal_G(\by(z))$, we get
\BEAS
x_e(z)&=&\frac{\theta_e y_{u\to v}(z)}{1+\sum_{w\in \d v}\theta_{wv}y_{w\to v}(z)}= \theta_ey_{u\to
  v}(z)\left(1-\sum_{f\in \d v}x_f(z)\right)\\
1-x_e(z) &=& \frac{1+\sum_{w\in \d u \bac v} \theta_{wu}y_{w\to
    u}(z)}{1+\sum_{w\in \d u}\theta_{wu}y_{w\to u}(z)}=\frac{z}{y_{u\to v}(z)}
  \left(1-\sum_{f\in \d u}x_f(z)\right),
\end{eqnarray*}
and the lemma follows.
\end{proof}

Note that for $e=(uv)$, we have
\BEAS
\frac{\partial \Phi^B_G}{\partial x_e} = \ln z+\ln \left( \theta_e\frac{\left(1-\sum_{f\in \d u}x_f\right)\left(1-\sum_{f\in \d v}x_f\right)}{x_e(1-x_e)}\right).
\EEAS
In particular, we have $\frac{\partial \Phi^B_G}{\partial
  x_e}(\bx(z))=0$ by Lemma \ref{lem} and then $\frac{d \Phi^B_G}{dz}(z)
= -U_G^B(\bx(z))/z$ and (\ref{eq:PG}) follows. Moreover (\ref{eq:SG})
follows from (\ref{eq:PG}) and (\ref{eq:UG}).

We now prove (\ref{eq:cS}).
Assume that there exists an infinite sequence of indices $n$ such that
$z_t(G_{n})\geq z^B_t+\epsilon$.
We denote $z_1=z_t^B$ and $z_2=z_t^B+\epsilon$. We have for
those indices:
\BEAS
-\frac{1}{|V_n|} U^s_{G_n}(z_1) \leq -\frac{1}{|V_n|} U^s_{G_n}(z_2) \leq
-\frac{1}{|V_n|} U^s_{G_n}(z_t(G_{n})) = \frac{t}{2}.
\EEAS
Then by the first part of the proof, we have $-\frac{1}{|V_n|}
U^s_{G_n}(z_1) \to -\frac{1}{v(G)}U^B_G(\bx(z_1)) = \frac{t}{2}$ and $-\frac{1}{|V_n|}
U^s_{G_n}(z_2) \to -\frac{1}{v(G)}U^B_G(\bx(z_2)) > \frac{t}{2}$ by the strict
monotonicity of $z\mapsto U^B_G(\bx(z))$. Hence we obtain a contradiction.
We can do a similar argument for indices such that $z_t(G_{n})\leq
z^B_t-\epsilon$, so that we proved that $z_t(G_n)\to z^B_t$.
Then (\ref{eq:cS}) follows from the continuity of the mappings $z\mapsto \by(z)$ and
$\bx\mapsto S^B_G(\bx)$.
\end{proof}

\begin{proposition}\label{prop:conc}
The function $S^B_G(\bx)$ is non-negative and concave on $FM(G)$.
\end{proposition}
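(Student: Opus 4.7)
The plan is to reduce both claims to a vertex-local analysis via the decomposition
\[
S^B_G(\bx) \;=\; \sum_{v\in V} g_v(\bx_{\d v}), \qquad g_v(\bx_{\d v}) \;:=\; \tfrac{1}{2}\sum_{e\in\d v}\bigl[-x_e\ln x_e + (1-x_e)\ln(1-x_e)\bigr] \;-\; p_v\ln p_v,
\]
obtained by splitting each edge term in (\ref{def:SB}) equally between its two endpoints (with $p_v = 1 - \sum_{e\in\d v}x_e \geq 0$). Since $\bx \mapsto \bx_{\d v}$ is linear, concavity of each $g_v$ on the local polytope $\{\bx_{\d v}\ge 0 : \sum_{e\in \d v}x_e\le 1\}$ will immediately give concavity of $S^B_G$ as a sum of concave functions.

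To prove concavity of $g_v$, I would compute its Hessian, which takes the clean rank-$1$-perturbed form $H_v = \mathrm{diag}(c_e)_{e\in\d v} - \tfrac{1}{p_v}\ind\ind^T$ with $c_e = \tfrac{2x_e-1}{2x_e(1-x_e)}$. The key structural observation is that $\sum_{e\in\d v}x_e \leq 1$ forces at most one $x_e$ to exceed $1/2$, so $\mathrm{diag}(c_e)$ has at most one positive diagonal entry. When none does, $H_v$ is negative semidefinite by inspection. Otherwise, letting $e_0$ denote the unique edge with $c_{e_0}>0$, I would apply the Schur complement to $-H_v$ partitioned on the $e_0$-coordinate: the $(2,2)$ block $\mathrm{diag}(-c_e)_{e\neq e_0}+\tfrac{1}{p_v}\ind\ind^T$ is manifestly PSD, and the Woodbury identity reduces the PSD condition on $-H_v$ to the single inequality $c_{e_0}(p_v+A)\leq 1$, where $A:=\sum_{e\neq e_0}1/|c_e|$. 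I expect this step to be the main technical point: a direct algebraic simplification yields the identity $p_v + A = 1 - x_{e_0} + \sum_{e\neq e_0}\tfrac{x_e}{1-2x_e}$, after which the Schur condition rearranges to
\[
2\sum_{e\neq e_0}\frac{x_e\,(x_{e_0}+x_e-1)}{1-2x_e}\;\leq\;p_v,
\]
and each summand on the left is non-positive because $x_{e_0}+x_e \leq 1-p_v \leq 1$, while the right side is non-negative.

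For non-negativity, I would invoke the Balinski--Edmonds--Pulleyblank characterization of the extreme points of $FM(G)$: every such point $\bx^\star$ is $\{0,\tfrac12,1\}$-valued, its $1$-coordinates forming a matching of $G$, its $\tfrac12$-coordinates forming a vertex-disjoint union of odd cycles, and every other coordinate equal to $0$ (in the bipartite case only $\{0,1\}$ values occur). Direct inspection shows that $S^B_G(\bx^\star) = 0$: at $x_e\in\{0,1\}$ both edge terms vanish via the usual convention $0\ln 0 = 0$; at $x_e=\tfrac12$ the two edge terms cancel; and at each vertex $v$, the value $p_v\in\{0,1\}$ forces $-p_v\ln p_v = 0$. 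Writing an arbitrary $\bx\in FM(G)$ as a convex combination $\sum_i\alpha_i\bx^{(i)}$ of extreme points, concavity then gives $S^B_G(\bx) \geq \sum_i\alpha_i S^B_G(\bx^{(i)}) = 0$. The deepest part of the whole argument is the algebraic reduction in the Schur complement step, where the fractional matching inequalities $x_{e_0}+x_e\le 1$ must be recognized as precisely what enforces negative semidefiniteness of $H_v$.
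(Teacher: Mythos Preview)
Your argument is correct, and it shares with the paper the same starting move: the vertex-by-vertex decomposition $S^B_G(\bx)=\sum_{v}g_v(\bx_{\d v})$. From there, however, the two proofs diverge.

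For concavity, the paper does not compute a Hessian. It observes that the local function equals $\tfrac12 g(\bx_{\d v})$ with
\[
g(\bx)= -\sum_i x_i\ln x_i +\sum_i(1-x_i)\ln(1-x_i)-2\Bigl(1-\sum_i x_i\Bigr)\ln\Bigl(1-\sum_i x_i\Bigr),
\]
and then writes $g=h+H(\sum_i x_i)$, where $H$ is the binary entropy and $h$ is the function shown to be concave and non-negative on the simplex in \cite{von13} (their Theorem~20). Concavity and non-negativity of $g$ follow immediately from this citation. Your route---diagonal-plus-rank-one Hessian, at most one positive $c_e$ because $\sum_{e\in\d v}x_e\le 1$, then a Schur/Woodbury reduction to the scalar inequality $c_{e_0}(p_v+A)\le1$, which unwinds to $2\sum_{e\neq e_0}\frac{x_e(x_{e_0}+x_e-1)}{1-2x_e}\le p_v$---is fully self-contained and makes transparent \emph{why} the fractional-matching constraints are exactly what is needed for negative semidefiniteness. (Minor point: your Hessian is only defined on the relative interior; you should add a one-line remark that concavity extends to the boundary by continuity. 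Note also that when $c_{e_0}>0$ you automatically get $x_e<1-x_{e_0}<\tfrac12$ for $e\neq e_0$, so no $|c_e|^{-1}$ blows up.)

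For non-negativity, the paper again inherits it from $h\ge 0$ and $H\ge 0$. Your extreme-point argument is genuinely different: the half-integral description of the vertices of $FM(G)$ gives $S^B_G=0$ at every extreme point, and concavity then forces $S^B_G\ge 0$ throughout. This is a nice structural observation---it pinpoints the zero set of $S^B_G$ on $FM(G)$---whereas the paper's route gives the sign without identifying where equality occurs.
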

\begin{proof}
From Theorem 20 in \cite{von13}, we know that the function  
\begin{eqnarray*}
h(\bx)&=& -\sum_i x_i\ln x_i +\sum_i(1-x_i)\ln (1-x_i)\\
&&-
\left(1-\sum_{i}x_i\right) \ln \left( 1-\sum_{i}x_i\right)+\left(\sum_{i}x_i\right)\ln\left(\sum_{i}x_i\right)
\end{eqnarray*}
is non-negative and concave on $\Delta^k=\{\bx \in \R^k,\: x_i\geq 0,
\sum_{i=1}^k x_i\leq 1\}$.
Hence the function
\begin{eqnarray*}
g(\bx)&=& -\sum_i x_i\ln x_i +\sum_i(1-x_i)\ln (1-x_i)
-
2\left(1-\sum_{i}x_i\right) \ln \left( 1-\sum_{i}x_i\right)
\end{eqnarray*}
is concave and non-negative on $\Delta^k$ since \begin{eqnarray*}
g(\bx)&=&h(\bx)+H\left(\sum_{i}x_i \right),
\end{eqnarray*}
where $H(p)=-p\ln p -(1-p)\ln (1-p)$ is the entropy of a Bernoulli
random variable and is concave in $p$.
The proposition follows by decomposing the sum in $S^B_G(\bx)$ vertex
by vertex.
\end{proof}

\subsection{Proof of Theorem \ref{th:maingen}}\label{sec:maingen}

\begin{corollary}\label{cor:fin}
Let $G$ be a bipartite graph, then for any $z>0$,
\BEAS
\Phi_G(z)=\ln P_G(z) \geq \max_{\bx\in FM(G)} \Phi^B_G(\bx;z)
\EEAS
and for $t<t^*(G)$, we have
\BEAS
\Sigma_{G}(t) \geq \frac{1}{v(G)} \max_{\bx\in FM_{tv(G)/2}(G)}
S^B_G(\bx).
\EEAS
\end{corollary}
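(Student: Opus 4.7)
The plan is to combine the monotonicity of the normalized partition function under $2$-lifts (Proposition \ref{prop:lift}) with the explicit identification of the local weak limit given by Theorem \ref{th:lwclim} and the variational characterization of the fixed point $\bx(z)$ given by Proposition \ref{prop:max}. The starting observation is that a $2$-lift of a bipartite graph is again bipartite, since the bipartition lifts coordinate-wise, so Proposition \ref{prop:lift} remains available at every step of an iterated lifting construction.

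First I would invoke Proposition \ref{prop:lin} to produce a sequence $\{G_n\}_{n\geq 0}$ with $G_0=G$, $G_n$ a $2$-lift of $G_{n-1}$, and $G_n \lwc (T(G),o)$. Iterating Proposition \ref{prop:lift}, one obtains $P_{G_{n-1}}(z)^2 \geq P_{G_n}(z)$ together with $v(G_n)=2v(G_{n-1})$, so the sequence $\frac{1}{v(G_n)}\ln P_{G_n}(z)$ is non-increasing in $n$ for every $z>0$; the same proposition gives $\cS_{G_{n-1}}(t)\geq \cS_{G_n}(t)$ for all $t\in[0,1]$. In particular,
\begin{eqnarray*}
\frac{\ln P_G(z)}{v(G)} \;\geq\; \frac{\ln P_{G_n}(z)}{v(G_n)}, \qquad \cS_G(t)\;\geq\;\cS_{G_n}(t).
\end{eqnarray*}

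Next I would pass to the limit $n\to\infty$. Since $G_n\lwc (T(G),o)$, equation (\ref{eq:PG}) of Theorem \ref{th:lwclim} gives $\lim_n \frac{1}{v(G_n)}\ln P_{G_n}(z) = \frac{1}{v(G)}\Phi^B_G(\bx(z),z)$, and equation (\ref{eq:cS}) gives $\lim_n \cS_{G_n}(t) = \cS^B_G(t)$ for $t<\tau^*(G)$. Combining with the previous display yields
\begin{eqnarray*}
\ln P_G(z) \;\geq\; \Phi^B_G(\bx(z),z),\qquad \cS_G(t)\;\geq\;\cS^B_G(t)=\frac{S^B_G(\bx(z^B_t))}{v(G)}.
\end{eqnarray*}
It remains to identify the right-hand sides with the claimed maxima. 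This is precisely the content of Proposition \ref{prop:max}: $\Phi^B_G(\bx(z),z)=\sup_{\bx\in FM(G)}\Phi^B_G(\bx;z)$, and the optimizer $\bx(z^B_t)$ of the constrained problem realizes $\max_{\bx\in FM_{tv(G)/2}(G)}S^B_G(\bx)$.

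The conceptual heart of the argument is already packaged in the earlier propositions; the only mild care needed is to check that every $G_n$ is bipartite (so Proposition \ref{prop:lift} applies at each step) and that the monotone sequence $\frac{1}{v(G_n)}\ln P_{G_n}(z)$ converges to the value computed on the universal cover, which is exactly what Theorem \ref{th:lwclim} delivers. In effect, the whole proof reduces to stacking these three inputs, and there is no genuine obstacle beyond bookkeeping.
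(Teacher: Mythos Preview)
Your proof is correct and follows essentially the same route as the paper: build the sequence of $2$-lifts from Proposition~\ref{prop:lin}, use Proposition~\ref{prop:lift} for monotonicity of $\frac{1}{v(G_n)}\ln P_{G_n}(z)$ and $\cS_{G_n}(t)$, pass to the limit via Theorem~\ref{th:lwclim}, and identify the limit with the maximum via Proposition~\ref{prop:max}. Your explicit check that bipartiteness is preserved under $2$-lifts is a useful detail the paper leaves implicit.
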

\begin{proof}
We consider the sequence of graphs defined in Theorem
\ref{th:lwclim}. By Proposition \ref{prop:lift}, the sequence
$\{\frac{1}{|V_n|}\Phi_{G_n}(z)\}_{n\in \N}$ is non-increasing in $n$ and converges to
$\frac{1}{v(G)} \Phi^B_G(\bx(z),z)$ by Theorem \ref{th:lwclim}.
Hence the first statement follows from Proposition \ref{prop:max}.

The second statement of Proposition \ref{prop:lift} implies that the sequence
$\{\Sigma_{G_n}(t)\}_{n\in \N}$ is non-increasing in $n$ and converges
to $\Sigma^B_G(t)$ by Theorem \ref{th:lwclim} and the last statement
follows from Proposition \ref{prop:max}.
\end{proof}


The final step for the proof of Theorem \ref{th:maingen} is now a
standard application of probabilistic bounds on the coefficients of
polynomials with only real zeros \cite{pitman}.

Let $k< \nu(G)=\nu$, $t=\frac{2k}{v(G)}$ and $z=z_t(G)$ such that $U^s_G(z)=-tv(G)/2=-k$. For
$i\leq \nu$, we define
\BEAS
a_i = \frac{w_i(G)z^i}{P_G(z)}.
\EEAS

By the Heilmann-Lieb theorem \cite{heilmannlieb}, the polynomial $A(x) =
\sum_{i=0}^{\nu}a_ix^i$ has only real zeros, i.e. $(a_0,\dots,
a_{\nu})$ is a P\'olya Frequency (PF) sequence. Note that
$A(1)=1=\sum_i a_i$.
By Proposition 1 in \cite{pitman}, the sequence $(a_0,\dots,
a_{\nu})$ is the distribution of the number $S$ of successes
in $\nu$ independent trials with probability $p_i$ of success on the
$i$-th trial, where the roots of $A(x)$ are given by $-(1-p_i)/p_i$
for $i$ with $p_i>0$. Note that $\EE[S] = \sum_i ia_i=-U^s_G(z)=k$.

We can now use Hoeffding's inequality see Theorem 5 in \cite{hoef}:
let $S$ be a random variable with probability distribution of the
number of successes in $\nu$ independent trials. Assume that
$\EE[S]=\nu p\in [b,c]$. Then
\BEAS
\PP\left( S\in [b,c]\right) \geq \sum_{i=b}^c{\nu \choose i}p^{i}(1-p)^{\nu-i}.
\EEAS

Hence, we have in our setting with $b=c=k$ and $p=\frac{k}{\nu}$:
\BEAS
\nonumber a_k &\geq& {\nu \choose k} p^k(1-p)^{\nu(1-p)}\\
\label{eq:lowerw}w_k(G) &\geq & b_{\nu,k}(p) \exp\left(v(G)\Sigma_G(t)\right)\\
\nonumber&\geq& b_{\nu,k}(k/\nu) \exp\left(\max_{\bx \in
    FM_{tv(G)/2}(G)} S^B_G(\bx)\right),
\EEAS
where the last inequality follows from Corollary \ref{cor:fin}.

The case $k=\nu$ is easy. Take $t=\frac{2\nu(1-\epsilon)}{v(G)}$
with $\epsilon>0$ and
$z=z_t(G)$ so that $U^s_G(z)=-tv(G)/2=-\nu(1-\epsilon)$. We define the sequence
of $a_i$'s as above. We now have $\EE[S] = \nu(1-\epsilon)$.
We then have $\EE[S]=\sum_i ia_i \leq \nu a_{\nu} + (1-a_{\nu})(\nu -1) =
a_\nu+\nu-1$, so that $a_{\nu}\geq 1-\nu\epsilon$ and
\BEAS
w_{\nu(G)}(G)\geq (1-\nu\epsilon)\exp\left(v(G)\Sigma_G(t)\right)\geq (1-\nu\epsilon)\exp\left(\max_{\bx \in
    FM_{tv(G)/2}(G)} S^B(\bx)\right).
\EEAS
Letting $\epsilon \to 0$ concludes the proof.

\subsection{Proof of Theorem \ref{th:rl}}\label{sec:rlp}

We start with a definition: the perfect matching corresponding to the edge $e$ in
$G$ is called the fibre corresponding to $e$, which we denote by
$F_e$.

We denote $\nu_n = \nu(L_n(G))$.
If $G$ is bipartite, we have by Theorem \ref{th:main} for all $k\leq \nu_n$,
\BEAS
\ln m_{k}(L_n(G)) \geq \ln b_{\nu_n,k}\left( \frac{k}{\nu_n}\right) +
\max_{\bx \in M_{k}(L_n(G))}S^B_{L_n(G)}(\bx).
\EEAS

It is easy to see that
\BEAS
\max_{\bx \in M_{k}(L_n(G))}S^B_{L_n(G)}(\bx) \geq n \max_{\bx\in M_{k/n}(G)}S^B_G(\bx),
\EEAS
since to any $\bx\in M_{k/n}$, we can associate $\by\in M_{k}(L_n(G))$
by taking $y_{e'}=x_e$ for all $e'\in F_e$.
Hence, we get
\BEAS
\frac{1}{n} \ln m_{k}(L_n(G)) \geq \max_{\bx\in M_{k/n}(G)}S^B_G(\bx)+\frac{1}{n}\ln b_{\nu_n,k}\left( \frac{k}{\nu_n}\right) 
\EEAS
Taking $k=\nu_n$ and letting $n\to \infty$, we get
\BEAS
\lim\inf_{n\to \infty} \frac{1}{n} \ln m_{\nu_n}(L_n(G)) \geq \max_{\bx\in FM_{\nu^*(G)}(G)}S^B_G(\bx)
\EEAS

For the upper bound, we do not need to assume that $G$ is bipartite as we have for all $z>0$,
\BEAS
\frac{1}{n}\ln m_{\nu_n}(L_n(G)) \leq \frac{1}{n}\ln P_{L_n(G)}(z)
-\frac{\nu_n}{n} \ln z.
\EEAS
Hence letting $n\to \infty$, we get
\BEAS
\lim\sup_{n\to \infty} \frac{1}{n} \ln m_{\nu_n}(L_n(G)) &\leq&
 \Phi^B_G(\bx(z),z)-\nu^*(G)\ln z \\
&=&\sup_{\bx\in FM(G)}\left\{ \ln z \left(\sum_e x_e-\nu^*(G)\right) +S^B_G(\bx)\right\}
\EEAS
Taking now $z\to \infty$ and noting that $\sum_e x_e-\nu^*(G)\leq 0$, we have:
\BEAS
\lim\sup_{n\to \infty} \frac{1}{n} \ln m_{\nu_n}(L_n(G)) & \leq &\sup_{\bx\in PM(G)}S^B_G(\bx).
\EEAS


\end{document}